\def\eqref#1{equation~\ref{#1}}
\def\1{\bm{1}}
\DeclareMathAlphabet{\mathsfit}{\encodingdefault}{\sfdefault}{m}{sl}
\SetMathAlphabet{\mathsfit}{bold}{\encodingdefault}{\sfdefault}{bx}{n}
\newcommand{\inprd}[1]{\langle#1\rangle} 
\newcommand{\dd}{\,{\rm d}}
\newcommand{\bfx}{{\bf z}}
\newcommand{\bfy}{{\bf y}}
\DeclareMathOperator{\sym}{sym}
\newcommand{\blue}{\color{blue}}
\newtheorem{theorem}{\textsc{Theorem}}[section]
\newtheorem{remark}{\textit{Remark}}[section]
\newtheorem{lemma}{\textsc{Lemma}}[section]
\title{Accelerated Over-Relaxation Heavy-Ball Method: Achieving Global Accelerated Convergence with Broad Generalization}
\author{ Jingrong Wei \\
  Department of Mathematics\\
  University of California, Irvine\\
  Irvine, CA 92697 \\
  \texttt{jingronw@uci.edu} \\
  \And
  Long Chen \thanks{Corresponding author.} \\
 Department of Mathematics\\
  University of California, Irvine\\
  Irvine, CA 92697 \\
  \texttt{chenlong@math.uci.edu} \\
}
\begin{document}

\maketitle

\begin{abstract}

The heavy-ball momentum method accelerates gradient descent with a momentum term but lacks accelerated convergence for general smooth strongly convex problems. This work introduces the Accelerated Over-Relaxation Heavy-Ball (AOR-HB) method, the first variant with provable global and accelerated convergence for such problems. AOR-HB closes a long-standing theoretical gap, extends to composite convex optimization and min-max problems, and achieves optimal complexity bounds. It offers three key advantages: (1) broad generalization ability, (2) potential to reshape acceleration techniques, and (3) conceptual clarity and elegance compared to existing methods.  
\end{abstract}

\section{Introduction}
We first consider the convex optimization problem: 
\begin{equation}\label{eq:opt}
    \min_{x \in \mathbb{R}^d} f(x),
\end{equation}
where the objective function $f$ is $\mu$-strongly convex and $L$-smooth.  Later on we consider extension to composite convex optimization $\min_x f(x) + g(x)$ with a non-smooth function $g$, and a class of min-max problems $\min_{u\in \mathbb{R}^m} \max_{p \in  \mathbb{R}^n} f(u) - g(p) + \inprd{ Bu,p }$ with bilinear coupling.

\paragraph{Notation.} $\mathbb{R}^d$ is $d$-dimensional Euclidean space with standard $\ell_2$-inner product $\inprd{\cdot, \cdot}$ and the induced norm $\|\cdot\|$. $f:\mathbb{R}^d \to \mathbb{R}$ is a differentiable function. We say $f$ is $\mu$-strongly convex function when there exists $\mu > 0$ such that 
\begin{equation*}
    f(y) - f(x) -  \inprd{\nabla f(x), y- x} \geq \frac{\mu}{2}\|x - y\|^2, \quad \forall ~ x, y \in \mathbb{R}^d.
\end{equation*}
We say $f$ is $L$-smooth with $L>0$ if its gradient is Lipschitz continuous:
$$\|\nabla f(x) - \nabla f(y)\| \leq L\|x - y\|, \quad \forall ~x, y \in \mathbb{R}^d.$$
The condition number $\kappa$ is defined as $\kappa = L/\mu$. 

When $f$ is $\mu$-strongly convex and $L$-smooth, the optimization problem (\ref{eq:opt}) has a unique global minimizer $x^*$. {We focus on iterative methods to find $x^*$. A useful measure of convergence} is the Bregman divergence of $f$, defined as $D_{f}(y, x) := f(y) - f(x) - \langle \nabla f(x), y - x \rangle.$
In particular, $D_{f}(x, x^*) = f(x) - f(x^*)$, since $\nabla f(x^*) = 0$.  
Various bounds and identities on the Bregman divergence can be found in Appendix \ref{app: convex}.

\paragraph{Heavy-ball methods and flow.}
Over the past two decades, first-order methods, which rely solely on gradient information rather than the Hessian as required by Newton's method, have gained significant interest due to their efficiency and adaptability to large-scale data-driven applications and machine learning tasks~\citep{bottou2018optimization}. Among these methods, the gradient descent method is the most straightforward and well-established algorithms. However, for ill-conditioned problems, where the condition number $\kappa \gg 1$, the gradient descent method suffers from slow convergence. 

In order to accelerate the gradient descent methods, a momentum term was introduced, encouraging the method to move along search directions that utilize not only current but also previously seen information. The heavy-ball (HB) method (also known as the momentum method~\citep{polyak1964some} was in the form of:
\begin{equation}\label{eq: HB Polyak}
x_{k+1} = x_k - \gamma \nabla f(x_k) + \beta (x_k - x_{k-1}),
\end{equation}
where $\beta$ and $\gamma$ are constant parameters.
Polyak motivated the method by an analogy to a ``heavy ball" moving in a potential well defined by the objective function $f$. The corresponding ordinary differential equation (ODE) model is commonly referred to as the heavy-ball flow~\citep{polyak1964some}:
\begin{equation}\label{eq: HB flow}
x^{\prime \prime} + \theta x^{\prime} + \eta \nabla f(x) = 0,
\end{equation}
where $x=x(t)$, $x^{\prime}$ is taking the derivative of $t$, and $\theta$ and $ \eta$ are positive constant parameters.

\paragraph{Non-convergence and non-acceleration of the HB method.}~\citet{polyak1964some} showed that for (\ref{eq: HB Polyak}) with  $\beta = \left ( \frac{\sqrt{L} - \sqrt{\mu}}{\sqrt{L} + \sqrt{\mu}}\right)^2$, $\gamma =\frac{4}{(\sqrt{L} + \sqrt{\mu})^2}$, and 
when $x_k$ is sufficiently close to the optimal solution $x^*$, $\|x_k - x^*\|$ converges at rate $\frac{1-\sqrt{\rho}}{1+\sqrt{\rho}}$, where $\rho = 1/\kappa$. Polyak's choice relies on the spectral analysis of the linear system and thus the accelerated rate is only limited to the convex and quadratic objectives and is local for iterates near $x^*$. 
Indeed~\cite{lessard2016analysis} designed a non-convergent example to show that the Polyak's choice of parameters does not guarantee the global convergence for general strongly convex optimization. By changing the parameters, in~\cite{ghadimi2015global,sun2019non,saab2022adaptive,shi2022understanding}, the global linear convergence of the HB method has been established. However, the best rate is $1-\mathcal O(\rho)$ given by~\cite{shi2022understanding}, which coincides with that of the gradient descent and not the accelerated rate $1- \mathcal O(\sqrt{\rho})$. 

The absence of acceleration is not due to a technical difficulty in the convergence analysis. Recently,~\cite{goujaud2023provable} have demonstrated that the HB method provably fails to achieve an accelerated convergence rate for smooth and strongly convex problems. Specifically, for any positive parameters $\beta$ and $ \gamma$ in (\ref{eq: HB Polyak}), either there exists an $L$-smooth, $\mu$-strongly convex function, and an initialization such that HB fails to converge; or even in the class of smooth and strongly convex quadratic function \( f \), the convergence rate is not accelerated: \( 1 - \mathcal{O}(\rho) \). For more related works on HB methods, see Appendix \ref{app:HB related}.

\paragraph{Accelerated first-order methods.}  
To accelerate the HB method, one can introduce either an additional gradient step with adequate decay or an extrapolation step into the algorithm; see~\cite{wilson2021lyapunov, siegel2019accelerated, chen2021unified}. 

Nesterov accelerated gradient (NAG) method~\cite[page 81]{nesterov1983method} can be viewed as an alternative enhancement of the HB method. Nesterov's approach calculates the gradient at points that are extrapolated based on the inertial force:
\begin{equation}\label{eq:NAG}
\begin{aligned}
    x_{k+1} &= x_k + \beta(x_k - x_{k-1}) - \gamma \nabla f( x_k + \beta(x_k - x_{k-1})), ~ \text{with} ~ \beta = \frac{\sqrt{L} - \sqrt{\mu}}{\sqrt{L} + \sqrt{\mu}}, \gamma = \frac{1}{L}.    
\end{aligned}
\end{equation}
Nesterov devised the method of estimate sequences~\citep{nesterov2013introductory} to prove that (\ref{eq:NAG}) achieves the accelerated linear convergence rate $1-\sqrt{\rho}$. 

Later on, numerous accelerated gradient methods have been developed for smooth strongly convex optimization problems~\citep{lin2015universal,drusvyatskiy2018optimal,bubeck2015geometric,aujol2022convergence,van2017fastest,cyrus2018robust}; to name just a few. However, little is known beyond convex optimization. One reason is that the techniques developed are often specialized on the convexity of the objective function, making them difficult to extend to non-convex cases. In machine learning terms, these approaches lack generalization ability. 

\paragraph{Main contributions for smooth strongly convex optimization.} We propose a variant of the HB method in the form of
\begin{equation}\label{eq:AOR-HB triple}
\begin{aligned}
  x_{k+1} &= x_k - \gamma (2\nabla f(x_{k}) - \nabla f(x_{k-1})) + \beta (x_k - x_{k-1}), \\
 \gamma &= \frac{1}{(\sqrt{L} + \sqrt{\mu})^2}, \quad \quad \beta = \frac{L}{(\sqrt{L} + \sqrt{\mu})^2}. 
\end{aligned}
\end{equation}
The most notable yet simple change is using $2\nabla f(x_{k}) - \nabla f(x_{k-1})$ not $\nabla f(x_k)$ to approximate $\nabla f(x)$. Namely an over-relaxation technique~\citep{hadjidimos1978accelerated} is applied to the gradient term. Therefore, we name (\ref{eq:AOR-HB triple}) accelerated over-relaxation heavy-ball (AOR-HB) method. 

{Rather than second-order ODEs, we consider a first-order ODE system proposed in~\cite{chen2021unified}:
\begin{equation}\label{eq:AGD flow}  
x^{\prime} = y - x, \quad y^{\prime} = x - y - \frac{1}{\mu}\nabla f(x),  
\end{equation}  
which is a special case of the HB flow (\ref{eq: HB flow}) with $\theta = 2$ and $\eta = \frac{1}{\mu}$ when $y$ is eliminated. Although (\ref{eq:AGD flow}) is mathematically equivalent to an HB flow, the structure of this $2 \times 2$ first-order ODE system is crucial for convergence analysis and algorithmic design. The same second-order ODE can correspond to different first-order systems. For example, another one equivalent to the HB flow (\ref{eq: HB flow}) is  
\begin{equation}\label{eq:velocity}  
x^{\prime} = v, \quad v^{\prime} = -\theta v - \eta \nabla f(x),  
\end{equation}  
where $v$ represents velocity, offering a physical interpretation. However, the convergence analysis is less transparent in the form (\ref{eq:velocity}) or the original second-order ODE (\ref{eq: HB flow}).}  

We obtain AOR-HB by discretization of (\ref{eq:AGD flow}) with two iterates $(x_k, y_k)$ cf. (\ref{AOR-HB}). {The AOR is used to symmetrize the error equation (\ref{eq:xcross}), which represents a novel contribution compared to~\cite{luo2022differential} and~\cite{chen2021unified}}. 
The choice of parameters $\beta$ and $\gamma$ in (\ref{eq:AOR-HB triple}) is derived from the time step-size $\alpha =\sqrt{\mu/L}$. We rigorously prove that AOR-HB enjoys the global linear convergence with accelerated rate, which closes a long-standing theoretical gap in optimization theory.

\begin{theorem}[Convergence of AOR-HB method]\label{thm:convergence rate of AOR-HB}
 Suppose  $f$ is $\mu$-strongly convex and $L$-smooth. Let $(x_k,y_k)$ be generated by scheme (\ref{AOR-HB}) with initial value $(x_0, y_0)$ and step size $\alpha = \sqrt{\mu/L}$. Then there exists a constant $C_0 = C_0(x_0, y_0,\mu, L)$ so that  we have the accelerated linear convergence
\begin{equation}\label{eq: linear conv AGD}
\begin{aligned}
f(x_{k+1}) - f(x^*) + \frac{\mu}{2}\|y_{k+1} - x^*\|^2 \leq C_0\left (\frac{1}{1+ \frac{1}{2}\sqrt{\mu/L}} \right)^k, \quad k\geq 1.
\end{aligned}
\end{equation}
\end{theorem}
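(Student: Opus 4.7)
My strategy is to transport the Lyapunov decay of the first-order ODE system (\ref{eq:AGD flow}) to the AOR-HB iterates, with the over-relaxed gradient $2\nabla f(x_k) - \nabla f(x_{k-1})$ acting as a second-order-consistent approximation of the implicit $\nabla f(x_{k+1})$. In the continuous setting, the natural candidate is
\begin{equation*}
\mathcal{E}(t) = f(x(t)) - f(x^*) + \frac{\mu}{2}\|y(t) - x^*\|^2,
\end{equation*}
and a direct computation using $\mu$-strong convexity together with the polarization identity $2\langle y - x^*, x - y\rangle = \|x - x^*\|^2 - \|y - x^*\|^2 - \|x - y\|^2$ gives $\mathcal{E}'(t) \leq -\mathcal{E}(t) - \tfrac{\mu}{2}\|x - y\|^2$, so $\mathcal{E}$ decays at rate $1$; my goal is to preserve a proportional rate after discretization, namely $(1+\alpha/2)^{-1}$ with $\alpha = \sqrt{\mu/L}$.

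First I would rewrite (\ref{eq:AOR-HB triple}) in the two-variable form $(x_k, y_k)$ referenced as (\ref{AOR-HB}), recognizing it as an implicit-explicit discretization of (\ref{eq:AGD flow}) with step $\alpha = \sqrt{\mu/L}$ in which the would-be implicit $\nabla f(x_{k+1})$ is replaced by the extrapolation $2\nabla f(x_k) - \nabla f(x_{k-1}) = \nabla f(x_k) + (\nabla f(x_k) - \nabla f(x_{k-1}))$. Next I would define the discrete Lyapunov $\mathcal{E}_k = f(x_k) - f(x^*) + \frac{\mu}{2}\|y_k - x^*\|^2$ and compute $\mathcal{E}_{k+1} - \mathcal{E}_k$ from the updates: strong-convexity Bregman bounds (Appendix \ref{app: convex}) control $f(x_{k+1}) - f(x_k)$ and $f(x_{k+1}) - f(x^*)$, while the polarization identity expands $\|y_{k+1} - x^*\|^2 - \|y_k - x^*\|^2$. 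This should yield a preliminary one-step estimate of the form
\begin{equation*}
\mathcal{E}_{k+1} - \mathcal{E}_k \leq -\tfrac{\alpha}{2}\,\mathcal{E}_{k+1} - c\,\alpha\,\|y_{k+1} - x_{k+1}\|^2 + R_k,
\end{equation*}
where $R_k$ is the residual produced by using the extrapolated gradient in place of $\nabla f(x_{k+1})$, essentially a cross term of the form $\langle \nabla f(x_k) - \nabla f(x_{k-1}),\,\cdot\,\rangle$.

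The main obstacle, and the step the AOR is designed to address, is absorbing $R_k$: this is precisely the error equation (\ref{eq:xcross}) whose symmetrization is cited as the novelty. My plan is to rewrite $R_k$ as a telescoping difference $G_k - (1+\alpha/2)^{-1}G_{k+1}$ of a nonnegative quantity $G_k$ built from $\nabla f(x_k) - \nabla f(x_{k-1})$ paired with a discrete moment that mirrors the continuous dissipation $\|x - y\|^2$, plus a remainder bounded via $L$-smoothness by a multiple of $\|x_{k+1}-x_k\|^2 + \|x_k - x_{k-1}\|^2$. Because the scheme forces each of these increments to be $O(\alpha)$, the remainder can be absorbed into the negative dissipation already isolated above at exactly the order needed to preserve the accelerated rate $1 - \Theta(\sqrt{\rho})$ rather than the slower $1 - \Theta(\rho)$ suffered by the classical HB; matching the constants so that the combined coefficient of $\|y_{k+1} - x_{k+1}\|^2$ stays nonpositive is the delicate point. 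Iterating the resulting clean recursion $\mathcal{E}_{k+1} + G_{k+1} \leq (1 + \alpha/2)^{-1}(\mathcal{E}_k + G_k)$ from $k=1$, then dropping the nonnegative $G_{k+1}$ on the left, yields the stated bound with $C_0 := (1+\alpha/2)(\mathcal{E}_1 + G_1)$, which depends only on $(x_0, y_0, \mu, L)$.
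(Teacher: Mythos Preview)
Your overall architecture---carry the continuous Lyapunov $\mathcal E$ to the iterates and repair the one-step inequality with a telescoping correction $G_k$---is exactly the paper's, and you correctly locate the crux at the cross term produced by the AOR perturbation of implicit Euler. Two concrete choices in your plan, however, would not close the argument.

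\textbf{The correction is anchored at $x^*$, not at consecutive iterates.} In the two-variable form (\ref{AOR-HB}) the AOR term is $2\nabla f(x_{k+1})-\nabla f(x_k)$, and the perturbation of implicit Euler is the vector $-\alpha\bigl(y_{k+1}-y_k,\;\mu^{-1}(\nabla f(x_{k+1})-\nabla f(x_k))\bigr)^{\!\top}$. Pairing with $\nabla\mathcal E(\bfx_{k+1})=\bigl(\nabla f(x_{k+1})-\nabla f(x^*),\;\mu(y_{k+1}-x^*)\bigr)^{\!\top}$ and using the elementary identity
\[
\langle a_{k+1},b_{k+1}-b_k\rangle+\langle b_{k+1},a_{k+1}-a_k\rangle
=\langle a_{k+1},b_{k+1}\rangle-\langle a_k,b_k\rangle+\langle a_{k+1}-a_k,b_{k+1}-b_k\rangle
\]
with $a_k=\nabla f(x_k)-\nabla f(x^*)$, $b_k=y_k-x^*$ splits the cross term \emph{exactly} into a telescope with $G_k=\alpha\langle\nabla f(x_k)-\nabla f(x^*),\,y_k-x^*\rangle$ plus the single local piece $\alpha\langle y_{k+1}-y_k,\nabla f(x_{k+1})-\nabla f(x_k)\rangle$. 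This $G_k$ is not built from $\nabla f(x_k)-\nabla f(x_{k-1})$ and is not sign-definite by itself; what is nonnegative is $\mathcal E_k+G_k$ (the paper's $\mathcal E^\alpha$), and only for $\alpha\le\sqrt{\mu/L}$.

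\textbf{The local piece is absorbed by the increment terms, not by the strong-Lyapunov dissipation.} Your one-step expansion already produces $-D_{\mathcal E}(\bfx_k,\bfx_{k+1})=-D_f(x_k,x_{k+1})-\tfrac{\mu}{2}\|y_{k+1}-y_k\|^2$; this is the reservoir, not $-c\alpha\mu\|y_{k+1}-x_{k+1}\|^2$. Bounding the local piece with Cauchy--Schwarz and the co-coercivity estimate $\|\nabla f(x_{k+1})-\nabla f(x_k)\|^2\le 2L\,D_f(x_k,x_{k+1})$ (not mere $L$-Lipschitz) gives
\[
\alpha\bigl|\langle y_{k+1}-y_k,\nabla f(x_{k+1})-\nabla f(x_k)\rangle\bigr|
\le \alpha\sqrt{L/\mu}\Bigl(D_f(x_k,x_{k+1})+\tfrac{\mu}{2}\|y_{k+1}-y_k\|^2\Bigr),
\]
so the cancellation is exact precisely at $\alpha=\sqrt{\mu/L}$. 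If instead you bound via $L$-smoothness by $\|x_{k+1}-x_k\|^2+\|x_k-x_{k-1}\|^2$ and feed that into $-c\alpha\mu\|y_{k+1}-x_{k+1}\|^2$, the constants force $\alpha=O(\mu/L)$ and you lose the acceleration---the very failure mode of classical HB you set out to avoid.
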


{\paragraph{Remark on non-strongly convex optimization.}  
When $\mu = 0$, we propose a variant of the AOR-HB method that incorporates the dynamic time rescaling introduced in \citep{luo2022differential}:  
\begin{equation}\label{eq:AOR-HB-0}  
x_{k+1} = x_k - \frac{k}{k+3} \frac{1}{L} \big(2\nabla f(x_k) - \nabla f(x_{k-1})\big) + \frac{k}{k+3} (x_k - x_{k-1}),  
\end{equation}  
which achieves an accelerated rate of $\mathcal O(1/k^2)$, comparable to NAG. We refer the details and the proofs to Appendix \ref{app: AOR-HB-0 convergence}. Another approach to extend the acceleration involves using the perturbed objective $f(x) + \frac{\epsilon}{2}\|x\|^2$, as discussed in \cite[Section 5.4]{lessard2016analysis}.}  

\paragraph{{Relation to other acceleration methods}.}
The AOR term $2\nabla f(x_{k}) - \nabla f(x_{k-1})$ can be treated as adding a gradient correction in the high resolution ODE model~\citep{shi2022understanding}. AOR-HB (\ref{eq:AOR-HB triple}) can be also rendered as a special case of the `SIE' iteration described in~\cite{zhang2021revisiting}. However, the parameter choice ($\sqrt{s} = \frac{1}{L}$ and $ m = 1$ ) recovering AOR-HB, does not satisfy the condition in their convergence analysis~\cite[Theorem 3]{zhang2021revisiting}. 

{For strongly convex optimization, acceleration can be viewed from various perspectives, with the resulting three-term formulas differing only by a higher-order $\mathcal{O}(\rho)$ perturbation.}
{With a proper change of variables, NAG (\ref{eq:NAG})  can be seen as a parameter perturbation of the AOR-HB in the form of (\ref{eq:AOR-HB triple}). The performance of AOR-HB are thus comparable to NAG but less precise than triple momentum (TM) methods~\citep{van2017fastest} for strongly convex optimization.} 

{
However, extending these methods (NAG, TM, SIE, and high-resolution ODEs) beyond the convex optimization is rare and challenging. In contrast, AOR-HB has a superior generalization capability as the true driving force of acceleration is better captured by the $2\times 2$ first-order ODE model (\ref{eq:AGD flow}).}

\paragraph{Extension to composite convex minimization.}  
Consider the optimization problem:  
\begin{equation}\label{eq:composite}  
\min_x f(x) + g(x),  
\end{equation}  
where $f$ is $\mu$-strongly convex and $L$-smooth, and $g$ is a convex but maybe non-smooth function. {To handle this, we only need to split the gradient term in (\ref{eq:AGD flow}) into $\nabla f(x) + \partial g(y)$ and use an implicit discretization scheme for $y$. This is equivalent to applying the proximal operator}  
\begin{equation*}  
    {\rm prox}_{\lambda g}(x) := \min_{y} g(y) + \frac{1}{2\lambda}\|y - x\|^2,  
\end{equation*}  
which we assume is available. The convergence analysis of the AOR-HB-composite Algorithm \ref{alg:AOR-HB-composite} directly follows from Theorem \ref{thm:convergence rate of AOR-HB}. {Numerically, Algorithm \ref{alg:AOR-HB-composite} also performs well for certain non-smooth and non-convex functions $g$ when its proximal operator is available, further demonstrating the generalization capability of the AOR-HB method.}

\paragraph{Extension to a class of saddle point problems.}
We extend the AOR-HB method  (Algorithm \ref{alg:AOR-HB saddle})  to a strongly-convex-strongly-concave saddle point system with bilinear coupling, defined as follows:
\begin{equation}\label{eq:min-max problem}
    \min_{u\in \mathbb{R}^m} \max_{p \in  \mathbb{R}^n} ~ \mathcal{L}(u,p) := f(u) - g(p) + \inprd{ Bu,p }
\end{equation}
where $B \in \mathbb{R}^{n \times m}$ is a matrix and $B^{\top}$ denotes its transpose, $f:\mathbb{R}^m \to \mathbb{R}$ and $g:\mathbb{R}^n \to \mathbb{R}$ are strongly convex functions with convexity constant $\mu_f$ and $\mu_g$, respectively. Problem (\ref{eq:min-max problem}) has a large number of applications, some of which we briefly introduce in Appendix \ref{app:saddle}.


Only few optimal first-order algorithms for saddle point problems have been developed recently~\citep{metelev2024decentralized,thekumparampil2022lifted,kovalev2022accelerated,jin2022sharper}. { In particular, \cite{thekumparampil2022lifted} introduced the Lifted Primal-Dual (LPD) method, the first optimal algorithm for problem (\ref{eq:min-max problem}). However, LPD involves five parameters, whereas AOR-HB-saddle requires only two parameters: $\mu$ and $L$. Numerically, AOR-HB-saddle achieves highly efficient performance while retaining the simplicity of the HB method.} We state the convergence result below and refer to Appendix \ref{app: proof of lin conv AOR-HB-saddle} for a proof. 

\begin{theorem}[Convergence of AOR-HB-saddle method]\label{thm:convergence rate of AOR-HB-saddle}
 Suppose  $f$ is $\mu_f$-strongly convex and $L_f$-smooth, $g$ is $\mu_g$-strongly convex and $L_g$-smooth. Let $(u_k,v_k, p_k, q_k)$ be generated by Algorithm \ref{alg:AOR-HB saddle}  with initial value $(u_0,v_0, p_0, q_0)$ and
 { step size $\alpha =  \max_{\beta\in (0,1)}\min \left \{\sqrt{\beta} \min \left \{\sqrt{\frac{\mu_f}{L_f}},  \sqrt{\frac{\mu_g}{L_g}} \right\} ,(1-\beta)  \frac{\sqrt{\mu_f\mu_g}}{\|B\|}\right\}$.} Then there exists a non-negative constant $C_0 = C_0(u_0, v_0, p_0, q_0, \mu_f, L_f, \mu_g, L_g)$ so that we have the linear convergence
$$
\begin{aligned}
&D_f(u_{k+1},u^*)+  D_g(p_{k+1},p^*)  +\frac{\mu_f}{2}\| v_{k+1} - u^*\|^2 +\frac{\mu_g}{2}\| q_{k+1} - p^*\|^2 \leq C_0\left (\frac{1}{1+ \alpha/2} \right)^k.
\end{aligned}
$$
\end{theorem}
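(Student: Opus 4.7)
My plan is to mirror the Lyapunov analysis that underlies Theorem \ref{thm:convergence rate of AOR-HB} but adapted to the primal--dual structure, exploiting the skew-symmetry of the bilinear coupling $\inprd{Bu,p}$. The natural continuous model is a $4\times 4$ first-order ODE system extending (\ref{eq:AGD flow}) to both primal and dual blocks, of the shape
\begin{equation*}
u' = v-u,\quad v' = u-v - \tfrac{1}{\mu_f}\bigl(\nabla f(u) + B^{\top} p\bigr),\quad
p' = q-p,\quad q' = p-q - \tfrac{1}{\mu_g}\bigl(\nabla g(p) - Bu\bigr),
\end{equation*}
for which the candidate Lyapunov function
$\mathcal{E}(t) = D_f(u,u^*)+D_g(p,p^*)+\tfrac{\mu_f}{2}\|v-u^*\|^2+\tfrac{\mu_g}{2}\|q-p^*\|^2$
satisfies $\mathcal{E}'(t)\le -\mathcal{E}(t)$, with the cross terms $\inprd{B(u-u^*), p-p^*}$ cancelling exactly by skew-symmetry.

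Next I would derive the discrete error equation for Algorithm \ref{alg:AOR-HB saddle}. Exactly as in the optimization case, applying the AOR substitution $2\nabla f(u_k)-\nabla f(u_{k-1})$ and $2B^\top p_k - B^\top p_{k-1}$ (together with the dual-side analogues) symmetrizes the residuals in the same spirit as the identity (\ref{eq:xcross}), turning first-order discretization errors into second-order ones controlled by $L_f,L_g,\|B\|$. Then, setting
\begin{equation*}
\mathcal{E}_k := D_f(u_k,u^*)+D_g(p_k,p^*)+\tfrac{\mu_f}{2}\|v_k-u^*\|^2+\tfrac{\mu_g}{2}\|q_k-p^*\|^2,
\end{equation*}
I would test the error equations against $u_{k+1}-u^*$, $v_{k+1}-u^*$, $p_{k+1}-p^*$, $q_{k+1}-p^*$, add them up, and use strong convexity of $f$ and $g$ (together with the Bregman identities in Appendix \ref{app: convex}) to produce a telescoping inequality $\mathcal{E}_{k+1}\le (1+\alpha/2)^{-1}\mathcal{E}_k$.

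The main obstacle will be controlling the bilinear cross-residuals that do \emph{not} cancel in the discrete scheme. These take the form of terms like $\alpha\inprd{B(v_{k+1}-v_k),\,q_{k+1}-q_k}$ and $\alpha\inprd{B(u_{k+1}-u^*),\,p_{k+1}-p_k}$, arising from the fact that primal and dual AOR corrections are evaluated at different iterates. Bounding them via Cauchy--Schwarz and Young's inequality produces contributions of order $\alpha^2\|B\|^2/\mu_f$ and $\alpha^2\|B\|^2/\mu_g$, which must be absorbed into $\tfrac{\mu_f}{2}\|v_{k+1}-u^*\|^2$ and $\tfrac{\mu_g}{2}\|q_{k+1}-p^*\|^2$. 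This is exactly what the constraint $\alpha\le(1-\beta)\sqrt{\mu_f\mu_g}/\|B\|$ enforces; the complementary constraint $\alpha\le\sqrt{\beta}\min\{\sqrt{\mu_f/L_f},\sqrt{\mu_g/L_g}\}$ absorbs the smoothness residuals from the AOR term, in direct parallel with the step size $\sqrt{\mu/L}$ in Theorem \ref{thm:convergence rate of AOR-HB}. Taking the maximum over $\beta\in(0,1)$ optimizes the trade-off between the two sources of discretization error.

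Finally, iterating $\mathcal{E}_{k+1}\le(1+\alpha/2)^{-1}\mathcal{E}_k$ yields the claimed contraction, with $C_0$ determined by $\mathcal{E}_1$ expressed in terms of the initial data $(u_0,v_0,p_0,q_0)$ and the problem constants $\mu_f,L_f,\mu_g,L_g$; the first step $k=0\to 1$ is handled separately as in Theorem \ref{thm:convergence rate of AOR-HB} to initialize the recursion. Because the bilinear coupling enters only through the skew-symmetric cross terms and the single Young's-inequality step above, the argument requires no structural assumption on $B$ beyond its operator norm, which is consistent with the AOR-HB philosophy that the true driver of acceleration is the $2\times 2$ (here $4\times 4$) first-order ODE structure rather than any second-order analysis.
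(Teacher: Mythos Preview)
There are two genuine gaps. First, your continuous model couples through the wrong variables: the paper's flow (\ref{eq:AG saddle}) uses $B^\top q$ and $Bv$, not $B^\top p$ and $Bu$. With your coupling the bilinear contribution to $-\inprd{\nabla\mathcal E,\mathcal G}$ is $\inprd{B(v-u^*),p-p^*}-\inprd{B(u-u^*),q-p^*}$, which does \emph{not} cancel by skew-symmetry as you assert; the paper deliberately places the skew-symmetric operator on the auxiliary block $y=(v,q)$ so that $\inprd{y-y^*,\mathcal N(y-y^*)}=0$ and the strong Lyapunov property (Theorem~\ref{thm: exp conv HB-saddle}) holds cleanly. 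The same mismatch shows up in your description of the AOR structure: in Algorithm~\ref{alg:AOR-HB saddle} the over-relaxation is on $Bv$ (and $B^\top q$ is taken explicitly at $q_k$), not on $B^\top p$ or $Bu$.

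Second, and more damaging for the rate, bounding the discrete bilinear residuals by Cauchy--Schwarz/Young and absorbing them into the quadratic part of $\mathcal E$ cannot deliver the stated step size. The residual that actually appears is $\alpha\inprd{y_{k+1}-y^*,\,y_{k+1}-y_k}_{\mathcal B^{\sym}}$, which carries a distance-to-optimum factor $\|y_{k+1}-y^*\|$. Young's inequality turns this into a term of order $(\alpha^2\|B\|^2/\mu_g)\|v_{k+1}-u^*\|^2$, absorbable into $-\alpha\mathcal E(\bfx_{k+1})$ only when $\alpha\lesssim\mu_f\mu_g/\|B\|^2$---a full square root worse than the claimed $(1-\beta)\sqrt{\mu_f\mu_g}/\|B\|$. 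The paper avoids this by introducing the modified Lyapunov function $\mathcal E^\alpha(\bfx)=\mathcal E(\bfx)+\alpha\inprd{\nabla F(x)-\nabla F(x^*),y-x^*}-\alpha\|y-y^*\|^2_{\mathcal B^{\sym}}$ and applying the identity of squares to $\inprd{y_{k+1}-y^*,y_{k+1}-y_k}_{\mathcal B^{\sym}}$: the $\|y-y^*\|^2_{\mathcal B^{\sym}}$ pieces telescope into $\mathcal E^\alpha$, leaving only the step-difference term $\|y_{k+1}-y_k\|^2_{\mathcal B^{\sym}}$, which is absorbed by $D_{\mathcal E}(\bfx_k,\bfx_{k+1})$ under the correct constraint (Lemmas~\ref{lm:eignDBsyn} and~\ref{lem: bound Lya cross}). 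This bilinear cross term in $\mathcal E^\alpha$ is the missing idea; without it the argument stalls at a suboptimal rate, and the contraction you aim for is on $\mathcal E^\alpha$, not on $\mathcal E$ directly.
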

 
\paragraph{Extension to a class of monotone operator equations.}
{
Our approach provides a unified framework for convex optimization, saddle-point problems, and monotone operator equations. Consider
$$
\begin{pmatrix}  
x'\\  
y'\\  
\end{pmatrix}  
=  
\begin{pmatrix}  
  - I &  I \\  
  I - \mu^{-1} A &   \quad - I - \mu^{-1} N  
\end{pmatrix}  
\begin{pmatrix}  
x\\  
y\\  
\end{pmatrix}.  
$$
\noindent $\bullet$ Strongly convex optimization: $A(x) = \nabla f(x), \, N = 0$. This is the accelerated gradient flow developed in~\cite{luo2022differential} and ~\cite{chen2021unified}.\\  
\noindent $\bullet$ Composite convex optimization: $A(x) = \nabla f(x), \, N(y) = \partial g(y)$.\\  
\noindent $\bullet$ Saddle-point problem with bilinear coupling:
  $A(x) = \begin{pmatrix}\nabla f(u) & 0 \\ 0 & \nabla g(p)\end{pmatrix}, \, N = \begin{pmatrix}0 & B^{\top} \\ B & 0 \end{pmatrix}$. \\ 
\noindent $\bullet$ A class of monotone operators: $A(x) = \nabla F(x), \, N$ is linear and skew-symmetric.}


In the discretization, AOR can be utilized to faithfully preserve the structure of the flow. The AOR term is also connected to the extra-gradient methods originally proposed by \citep{korpelevich1976extragradient,popov1980modification} for saddle point problems, which has only non-accelerated rate $\mathcal O(\kappa |\log \epsilon|)$.
We refer to Appendix \ref{app: extra-gradient} for more discussion.



\section{AOR-HB Method for Convex Optimization}
\label{sec: convex opt}

\paragraph{Strong Lyapunov property.}
To simplify notation, introduce $\bfx = (x,y)^{\top}$ and $\bfx^* =  (x^*,x^*)^{\top}$. Let $\mathcal G(\bfx)$ be a vector field with $\mathcal G(\bfx^*)=0$. 
For a generic ODE $\bfx^{\prime}= \mathcal G(\bfx)$, a Lyapunov function is a non-negative function $\mathcal E(\bfx)$ satisfying $-\inprd{ \nabla \mathcal E(\bfx),\mathcal G(\bfx)}\geq 0$ for all $\bfx$ near $\bfx^*$ and $\mathcal E(\bfx) =0$ if and only if $\bfx=\bfx^*$.
In order to get the exponential stability,~\cite{chen2021unified} introduce the so-called strong Lyapunov property: there exists $c>0$ such that
  \begin{equation}\label{eq:strongLy}
  -\inprd{ \nabla \mathcal E(\bfx),\mathcal G(\bfx)} \geq  c\, \mathcal E(\bfx)\quad \forall \bfx\in \mathbb R^d. 
  \end{equation}
\begin{lemma}
 Let $\bfx^{\prime}= \mathcal G(\bfx)$ with $\bfx(0) = \bfx_0$. Assume there exists a Lyapunov function $\mathcal E(\bfx)$ satisfying the strong Lyapunov property (\ref{eq:strongLy}). Then we have the exponential stability 
 $$
 \mathcal E(\bfx) \leq e^{-ct} \mathcal E(\bfx_0).
 $$ 
\end{lemma}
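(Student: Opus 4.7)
The plan is to reduce the claim to a one-dimensional Gronwall-type estimate for the scalar function $t \mapsto \mathcal E(\bfx(t))$ along the trajectory of the ODE. The strong Lyapunov property is tailor-made to produce a linear decay inequality once we differentiate $\mathcal E$ in time.

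First I would apply the chain rule: since $\bfx(t)$ solves $\bfx' = \mathcal G(\bfx)$,
$$\frac{\mathrm d}{\mathrm d t} \mathcal E(\bfx(t)) = \inprd{\nabla \mathcal E(\bfx(t)), \bfx'(t)} = \inprd{\nabla \mathcal E(\bfx(t)), \mathcal G(\bfx(t))}.$$
Then I would invoke the strong Lyapunov hypothesis (\ref{eq:strongLy}) at the point $\bfx(t)$ to obtain the differential inequality
$$\frac{\mathrm d}{\mathrm d t} \mathcal E(\bfx(t)) \leq -c\, \mathcal E(\bfx(t)).$$

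Next I would integrate this inequality. The cleanest route is to multiply by the integrating factor $e^{ct}$, yielding
$$\frac{\mathrm d}{\mathrm d t}\bigl( e^{ct}\, \mathcal E(\bfx(t)) \bigr) = e^{ct}\left( \frac{\mathrm d}{\mathrm d t} \mathcal E(\bfx(t)) + c\, \mathcal E(\bfx(t)) \right) \leq 0,$$
so that $t \mapsto e^{ct}\mathcal E(\bfx(t))$ is non-increasing. Evaluating at $t$ and $0$ gives $e^{ct}\mathcal E(\bfx(t)) \leq \mathcal E(\bfx_0)$, which rearranges to the desired bound $\mathcal E(\bfx(t)) \leq e^{-ct}\mathcal E(\bfx_0)$.

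There is no real obstacle here; the argument is essentially one line of calculus. The only mild point to be careful about is the regularity required: one needs $\mathcal E$ differentiable and $\bfx(t)$ differentiable in $t$ so that the chain rule is justified, both of which are implicit in the statement (the Lyapunov property involves $\nabla \mathcal E$, and $\bfx$ is a classical solution of the ODE). Alternatively, a Gronwall inequality applied directly to the differential inequality would yield the same conclusion without explicitly invoking an integrating factor.
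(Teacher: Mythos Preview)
Your proposal is correct and matches the paper's proof essentially line for line: the paper also applies the chain rule to obtain $\frac{\mathrm d}{\mathrm d t}\mathcal E(\bfx(t)) = \inprd{\nabla \mathcal E(\bfx), \mathcal G(\bfx)} \leq -c\,\mathcal E(\bfx(t))$ and then simply states that exponential stability follows. If anything, your write-up is more detailed, as you spell out the integrating-factor step that the paper leaves implicit.
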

\begin{proof}
By the chain rule and the strong Lyapunov property:
\begin{equation}
    \frac{\dd}{\dd t} \mathcal{E}\left( \bfx(t)\right)=\inprd{\nabla \mathcal{E}(\bfx), \bfx^{\prime}}=\inprd{\nabla \mathcal{E}(\bfx), \mathcal G(\bfx)}\leq - c\mathcal{E}(\bfx(t)),
\end{equation}
and the exponential stability follows. 
\end{proof}



For the system (\ref{eq:AGD flow}), let $\mathcal E(x, y) = f(x) - f(x^*) + \frac{\mu}{2}\|y - x^*\|^2$. By direct calculation and the $\mu$-convexity of $f$, we obtain the strong Lyapunov property
\begin{equation*}
\begin{aligned}
	{}&-\inprd {\nabla \mathcal E(x, y), \mathcal G(x, y)}  = 
\langle \nabla f(x), x-x^* \rangle -\mu \langle x-y,y-x^*\rangle\\
={}&\langle \nabla f(x), x-x^* \rangle -\frac{\mu}{2}
\left(\| x-x^*\|^2- \| x-y\|^2-\|y-x^*\|^2\right)\\
\geq {}& f(x)-f(x^*)+\frac{\mu}{2}\| y-x^*\|^2 +
	\frac{\mu}{2}\| x-y\|^2
	={}\mathcal E(x, y) 
	+\frac{\mu}{2}\| x-y\|^2.
\end{aligned}
\end{equation*}
{As shown above, the strong Lyapunov property and exponential stability can be established more straightforwardly using the first-order ODE system (\ref{eq:AGD flow}) rather than the original second-order ODE (\ref{eq: HB flow}) or its equivalent form (\ref{eq:velocity}).}

\paragraph{AOR-HB method for smooth strongly convex minimization problems.}
Based on discretization of (\ref{eq:AGD flow}), we propose an Accelerated Over-Relaxation Heavy-Ball (AOR-HB) method:
\begin{subequations}\label{AOR-HB}
\begin{align}
\label{eq:AGD1}      \frac{x_{k+1}-x_k}{\alpha}&= y_k - x_{k+1} ,\\
\label{eq:AGD2}    \frac{y_{k+1}-y_k}{\alpha} &=x_{k+1} - y_{k+1} - \frac{1}{\mu}(2\nabla f(x_{k+1}) -\nabla f(x_k)).
\end{align}
\end{subequations}

Setting $\alpha = \sqrt{\mu/L}$ and eliminating $y_k$ leads to the formulation (\ref{eq:AOR-HB triple}), which is summarized in Algorithm \ref{alg:AOR-HB}. The gradient $\nabla f(x_{k+1})$ can be reused in the next iteration, so essentially only one gradient evaluation is required per iteration in Algorithm \ref{alg:AOR-HB}.  


\begin{algorithm}
\caption{Accelerated Over-Relaxation Heavy-Ball Method (AOR-HB) }
\label{alg:AOR-HB}
\begin{algorithmic}[1]
\State \textbf{Parameters: $x_0, x_1 \in \mathbb{R}^d, L,  \mu.$ Set $ \gamma= \frac{1}{(\sqrt{L} + \sqrt{\mu})^2},\beta = \frac{L}{(\sqrt{L} + \sqrt{\mu})^2}$.}
\For{$k=1, 2, \dots$}

\State $\displaystyle  x_{k+1} = x_k - \gamma (2\nabla f(x_{k}) - \nabla f(x_{k-1})) + \beta (x_k - x_{k-1})$ 
\EndFor
\State \textbf{return} $x_{k+1}$
\end{algorithmic}
\end{algorithm}
    \vspace{-0.275cm}

    The convergence rate in Theorem \ref{thm:convergence rate of AOR-HB} is global and accelerated, in the sense that to obtain $f(x_k) - f(x^*) \leq \epsilon $ and $\|x_k - x^*\| \leq \epsilon$, we need at most $\mathcal O \left (\sqrt{\kappa} |\log \epsilon| \right)$ iterations. The iteration complexity is optimal for first-order methods~\citep{nesterov2013introductory}. We give a proof for Theorem \ref{thm:convergence rate of AOR-HB} in Appendix \ref{app:proof of convergence} and will outline the key steps using convex quadratic objectives here. 
 
\paragraph{Preliminaries for convergence analysis.} 
For a symmetric matrix $A$, introduce $\inprd {x, y}_A = \inprd{Ax, y} = \inprd{x, Ay}$ and $\|\cdot\|^2_{A} := \inprd {\cdot, \cdot }_A$. Consider the quadratic and convex function $f(x) = \frac{1}{2}\|x\|_A^2 - \inprd{ b,x} + c$ for $b \in \mathbb{R}^d, c \in \mathbb{R}$ and symmetric and positive definite (SPD) matrix $A$ with bound $0< \mu  \leq  \lambda(A) \leq L$, where $\lambda(A)$ denotes a generic eigenvalue of matrix $A$, and $\lambda_{\min}(A)$ and $\lambda_{\max}(A)$ are minimal and maximal eigenvalues of $A$, respectively. 


For two symmetric matrices $D, A$, we denote $A \prec D$ if $D-A$ is SPD and $A\preceq D$ if $D-A$ is positive semi-definite. One can easily verify that $A\preceq D$ is equivalent to $\lambda_{\max}(D^{-1}A)\leq 1$ when $D$ is non-singular or $\lambda_{\min}(A^{-1}D)\geq 1$ when $A$ is non-singular.

Denote by $ \mathcal D = 
\begin{pmatrix}
A & \; 0\\
0 &\; \mu I
\end{pmatrix}$ and $   \mathcal A^{\sym} = 
\begin{pmatrix}
0 & \; A \\
A & \; 0
\end{pmatrix} $. 
Define two Lyapunov functions 
\begin{align*}
   \mathcal E(\bfx) := \frac{1}{2}\|\bfx- \bfx^*\|^2_{ \mathcal  D}, \quad \text{ and }\; \mathcal E^{\alpha} (\bfx) := \frac{1}{2}\|\bfx - \bfx^*\|^2_{\mathcal D + \alpha \mathcal A^{\sym}}.
\end{align*}
%
By direct calculation, we have $\lambda(\mathcal D^{-1}\mathcal A^{\sym}) = \pm \sqrt{\lambda(A)/\mu}$ which implies 
\begin{equation}\label{eq:ADalpha}
\left (1 -  \alpha/\sqrt{\rho} \right) \mathcal D \preceq \mathcal D + \alpha \mathcal A^{\sym}  \preceq  \left  (1 + \alpha/\sqrt{\rho} \right)\mathcal D, \qquad \rho = \mu/L.
\end{equation}
Therefore for $0\leq \alpha \leq \sqrt{\rho}$,
\begin{equation}\label{eq: rel between Es}
0\leq   \left (1 - \alpha/\sqrt{\rho} \right)\mathcal E(\bfx) \leq \mathcal E^{\alpha} (\bfx)  \leq  2  \mathcal E(\bfx), \quad \forall \, \bfx = (x, y)^{\top} \in \mathbb{R}^{2d}.
\end{equation}

\paragraph{Convergence analysis for quadratic functions.} The AOR-HB method  (\ref{AOR-HB}) can be written as a correction of the implicit Euler discretization of the flow (\ref{eq:AGD flow})
\begin{equation}\label{eq:correctionIE}
 \bfx_{k+1}  - \bfx_k = \alpha \mathcal G( \bfx_{k+1}) - \alpha \begin{pmatrix}
    0 & I  \\
    \frac{1}{\mu}A & 0
\end{pmatrix}( \bfx_{k+1} - \bfx_k).
\end{equation}

As $\mathcal E$ is quadratic, $D_{\mathcal E}(\bfx, \bfy) = \frac{1}{2}\|\bfx - \bfy\|_{\mathcal D}^2$. 
So use the definition of Bregman divergence, we have the identity for the difference of the Lyapunov function $\mathcal E$ at consecutive steps:
\begin{equation}\label{eq:Eqdiff}
\begin{aligned}
   \mathcal E( \bfx_{k+1})-\mathcal E(\bfx_{k})=&~  \inprd{ \nabla \mathcal E(\bfx_{k+1}),\bfx_{k+1}- \bfx_k}-\frac{1}{2} \| \bfx_{k} -\bfx_{k+1}\|_{\mathcal D}^2. 
\end{aligned}
\end{equation}
Substitute (\ref{eq:correctionIE}) into (\ref{eq:Eqdiff}) and expand the cross term as:
\begin{equation}\label{eq:xcross}
\begin{aligned}
  &\inprd{ \nabla \mathcal E(\bfx_{k+1}),\bfx_{k+1}- \bfx_k} =  \alpha \inprd{ \nabla \mathcal E(\bfx_{k+1}) , \mathcal G (\bfx_{k+1})}-\alpha \inprd{\bfx_{k+1} -\bfx^*, \bfx_{k+1} -\bfx_k}_{\mathcal A^{\sym}}.
\end{aligned}
\end{equation}
The AOR $Ax\approx {2Ax_{k+1} - Ax_{k}} = Ax_{k+1} + A(x_{k+1} - x_k)$ is used to make the last term in (\ref{eq:xcross}) associated to a symmetric matrix $\mathcal  A^{\rm sym}$. With such symmetrization, we can use identity of squares  
\begin{equation*}
    \begin{aligned}
        &\inprd{\bfx_{k+1} -\bfx^*, \bfx_{k+1} -\bfx_k}_{\mathcal A^{\sym}} =\frac{1}{2}\left ( \|\bfx_{k+1}  -\bfx^*\|_{ \mathcal A^{\sym}}^2 + \|\bfx_{k+1} - \bfx_k\|_{ \mathcal A^{\sym}}^2 - \|\bfx_{k} -\bfx^*\|_{\mathcal A^{\sym}}^2\right) .
    \end{aligned}
\end{equation*}
Substitute back to (\ref{eq:Eqdiff}) and rearrange terms, we obtain the following identity:
\begin{equation}\label{eq:EalphaAbridge}
\begin{aligned}
   \mathcal E^{\alpha}(\bfx_{k+1})-\mathcal E^{\alpha} (\bfx_{k}) = &~ \alpha \inprd{\nabla \mathcal E(\bfx_{k+1}), \mathcal G (\bfx_{k+1} )}- \frac{1}{2} \|\bfx_{k+1} - \bfx_k\|_{\mathcal D+ \alpha \mathcal A^{\sym}}^2,
\end{aligned}
\end{equation}
which holds for arbitrary step size $\alpha$. 

Now take $\alpha = \sqrt{\rho}$. Due to (\ref{eq:ADalpha}), the last term in (\ref{eq:EalphaAbridge}) is non-positive. Use the strong Lyapunov property $  -\inprd{ \nabla \mathcal E(\bfx),\mathcal G(\bfx)} \geq \mathcal E(\bfx)$, and the bound $\mathcal E^{\alpha} (\bfx)  \leq  2  \mathcal E(\bfx)$ in (\ref{eq: rel between Es}) to get
\begin{equation}\label{eq: decay of E quadratic}
     \mathcal E^{\alpha}(\bfx_{k+1})-\mathcal E^{\alpha} (\bfx_{k})  \leq - \alpha \mathcal E(\bfx_{k+1}) \leq - \frac{\alpha}{2}\mathcal E^{\alpha}(\bfx_{k+1}),
\end{equation}
which implies the global linear convergence of $\mathcal E^{\alpha}$:
\begin{equation}\label{eq:Ealpharate}
\mathcal E^{\alpha}(\bfx_{k+1}) \leq \frac{1}{1 + \alpha/2} \mathcal E^{\alpha} (\bfx_{k}) \leq \left (\frac{1}{1+\alpha/2} \right)^{k+1} \mathcal E^{\alpha} (\bfx_0) , \quad k  \geq 0.
\end{equation}
Moreover, (\ref{eq: decay of E quadratic}) implies 
$$
\mathcal E(\bfx_{k+1}) \leq \frac{1}{\alpha} (\mathcal E^{\alpha} (\bfx_{k}) - \mathcal E^{\alpha}(\bfx_{k+1})) \leq  \frac{1}{\alpha}\mathcal E^{\alpha} (\bfx_{k}) \leq   \frac{1}{\alpha}\left (\frac{1}{1+\alpha/2} \right)^{k} \mathcal E^{\alpha} (\bfx_0),
$$
which implies (\ref{eq: linear conv AGD}) with $C_0 =  \frac{1}{\alpha}  \mathcal E^{\alpha} (\bfx_0) \geq 0$. {Note that $C_0 = \mathcal{O}(\sqrt{\kappa})$ is large. However, when considering the convergence of $\mathcal{E}^\alpha$ as in (\ref{eq:Ealpharate}), this dependency is absent.}

\paragraph{AOR-HB method for composite convex optimization.} For the composite convex optimization problem (\ref{eq:composite}), AOR-HB can be seamlessly extended by using:
\begin{equation}\label{eq: AOR-HB-composite flow}
x' = y - x, \quad \mu y' - [\mu (x-y) - \nabla f(x)] \in \partial g(y),
\end{equation}
where $\partial g(\cdot)$ denotes the set of subgradient. The strong Lyapunov property and exponential stability still hold (see Appendix \ref{app: AOR-HB-composite flow}) as the modification only bring an additional monotone term. 

We use implicit discretization in $g$ and apply AOR to $\nabla f$:
$$
\frac{x_{k+1}-x_k}{\alpha}= y_k-x_{k+1}, \quad  \mu \frac{y_{k+1}-y_k}{\alpha}- \mu(x_{k+1} - y_{k+1})+(2\nabla f(x_{k+1}) - \nabla f(x_k)) \in \partial g(y_{k+1}).
$$ 
Given the proximal operator of $g$, an equivalent and computation-favorable formulation is proposed in Algorithm \ref{alg:AOR-HB-composite}. In the convergence analysis, since the non-smooth part is discretized implicitly, no difference arises in the error equation (\ref{eq:correctionIE}). Therefore, the convergence rate and proof are identical to those in Theorem \ref{thm:convergence rate of AOR-HB}; see Theorem \ref{thm:convergence rate of AOR-HB-composite}. In contrast, it requires considerable effort to generalize Nesterov's accelerated gradient method to the composite case~\cite{beck2009fast}.

\begin{algorithm}
\caption{Accelerated Over-Relaxation Heavy-Ball Method for Convex Composite Minimization (AOR-HB-composite)}
\label{alg:AOR-HB-composite}
\begin{algorithmic}[1]
\State \textbf{Parameters: $x_0, y_0 \in \mathbb{R}^d, L,  \mu.$ Set $\alpha = \sqrt{\frac{\mu}{L}
}$ and $\lambda = \frac{\alpha}{(1+\alpha)\mu}$.}
\For{$k=1, 2, \dots$}
\State $\displaystyle x_{k+1} = \frac{1}{1+\alpha} (x_k + \alpha y_k)$
\State $\displaystyle y_{k+1} = {\rm prox}_{\lambda g} (z_k)$ where $z_k = \frac{1}{1+\alpha}(y_k + \alpha x_{k+1}) - \lambda (2\nabla f(x_{k+1}) - \nabla f(x_k))$
\EndFor
\State \textbf{return} $y_{k+1}$
\end{algorithmic}
\end{algorithm}

\section{AOR-HB Method for a Class of Saddle Point Problems}\label{sec:saddle}

In this section, we extend the flow (\ref{eq: HB flow}) to min-max problems and propose an accelerated first-order method for strongly-convex-strongly-concave saddle-point problems with bilinear coupling. 

The solution $(u^*, p^*)$ to the min-max problem (\ref{eq:min-max problem}) is called a saddle point of $\mathcal{L}(u,p)= f(u) - g(p) + \inprd{ Bu,p }$:
\begin{equation*}
    \mathcal{L}(u^*, p) \leq \mathcal{L}(u^*, p^*) \leq  \mathcal{L}(u, p^*), \quad \forall ~ u\in \mathbb{R}^m, p\in \mathbb{R}^n.
\end{equation*}
As $\mathcal{L}(\cdot, p)$ is strongly convex for any given $p$ and $\mathcal{L}(u, \cdot)$ is strongly concave
for any given $u$, we called it a strongly-convex-strongly-concave saddle point problem. Under this setting, the saddle point $(u^*, p^*)$ exists uniquely and is necessarily a critical point of $\mathcal{L}(u, p)$ satisfying:
\begin{equation}\label{eq:KKT_saddle}
  \nabla \mathcal L(u^*, p^*)  :=   \begin{pmatrix}
             \nabla f & B^{\top} \\
       B &  -\nabla g
    \end{pmatrix}\begin{pmatrix}
          u^* \\
       p^*
    \end{pmatrix} = 
\begin{pmatrix}
0\\
0 
\end{pmatrix},
\end{equation}
where $\langle \nabla f, u\rangle:= \nabla f(u)$ and $\langle \nabla g, p\rangle:= \nabla g(p)$.

\paragraph{HB-saddle flow.}
We propose the following HB-saddle flow for solving the min-max problem (\ref{eq:min-max problem}):
\begin{equation}\label{eq:AG saddle}
\begin{aligned}
     u^{\prime} &= v - u , &v^{\prime} & = u - v - \frac{1}{\mu_f}(\nabla f(u) + B^{\top}q), \\
     p^{\prime} &= q - p, & q^{\prime} & =p - q -\frac{1}{\mu_g}(\nabla g(p) - Bv).
\end{aligned}
\end{equation}
The HB-saddle flow (\ref{eq:AG saddle}) preserves the strong Lyapunov property (see Appendix \ref{app: exp HB-saddle}) and thus the exponential stability of $(u^*, p^*)$ for (\ref{eq:AG saddle}) follows.

For the gradient term, we use $J \nabla \mathcal L$ instead of $\nabla \mathcal L$ with $J = \begin{pmatrix}
    I & 0 \\ 0 & -I
\end{pmatrix}$. 
When $\nabla f$ and $\nabla g$ are linear, $\nabla \mathcal L$ is symmetric but not monotone. $J \nabla \mathcal L$ is non-symmetric but strongly monotone. Namely $$\langle J \nabla \mathcal L(x) - J \nabla \mathcal L(y), x - y \rangle \geq \min\{\mu_f, \mu_g\} \| x - y\|^2, \quad \forall ~ x = (u,p)^{\top}, y = (v,q)^{\top}.$$  The strong monotonicity has essential similarity to strongly convexity. Comparing to the convex optimization, one extra difficulty is the bilinear coupling $\inprd{ Bu,p }$ {which can be again handled by the AOR technique in discretization}. 

\paragraph{AOR-HB-saddle method.}
We propose an Accelerated Over-Relaxation Heavy-Ball (AOR-HB-saddle) Method for solving the min-max problem (\ref{eq:min-max problem}) in Algorithm \ref{alg:AOR-HB saddle}. Each iteration requires $2$ matrix-vector products and $2$ gradient evaluations if we store $\nabla f(u_k)$ and $\nabla g(p_k)$.

\begin{algorithm}
\caption{Accelerated over-relaxation Heavy-ball method for strongly-convex-strongly-concave saddle point problems with bilinear coupling (AOR-HB-saddle)}
\label{alg:AOR-HB saddle}
\begin{algorithmic}[1]
\State \textbf{Parameters: $u_0, v_0 \in \mathbb{R}^m, p_0, q_0 \in \mathbb{R}^n, L_f, L_g,  \mu_f, \mu_g, \|B\|.$ \\
Set $\alpha = \max_{\beta\in (0,1)}\min \left \{\sqrt{\beta} \min \left \{\sqrt{\frac{\mu_f}{L_f}},  \sqrt{\frac{\mu_g}{L_g}} \right\} ,(1-\beta)  \frac{\sqrt{\mu_f\mu_g}}{\|B\|}\right\}.$}
\For{$k=0, 1, 2, \dots$}
\State $\displaystyle u_{k+1} = \frac{1}{1+\alpha} (u_k + \alpha v_k)$; \quad $\displaystyle p_{k+1} = \frac{1}{1+\alpha} (p_k + \alpha q_k)$
\State $\displaystyle v_{k+1} =  \frac{1}{1+\alpha} \left [v_k + \alpha u_{k+1} -  \frac{\alpha}{\mu_f}(2\nabla f(u_{k+1}) - \nabla f(u_k) + B^{\top} q_k) \right ]$
\State $\displaystyle q_{k+1} =  \frac{1}{1+\alpha} \left [q_k + \alpha p_{k+1} -  \frac{\alpha}{\mu_g}(2\nabla g(p_{k+1}) - \nabla g(p_k) - B (2v_{k+1} - v_k)) \right]$
\EndFor
\State \textbf{return} $u_{k+1}, v_{k+1}, p_{k+1}, q_{k+1}$
\end{algorithmic}
\end{algorithm}

The convergence rate in Theorem \ref{thm:convergence rate of AOR-HB-saddle} is global and accelerated, meaning that to obtain $\|(u_k, p_k) - (u^*, p^*)\| \leq \epsilon $ and $\|(v_k, q_k) - (u^*, p^*)\| \leq \epsilon$, we need at most $\mathcal O\left(\sqrt{L_f/\mu_f + L_g/\mu_g + \|B\|^2/(\mu_f \mu_g) }  | \log \epsilon |\right)$ iterations. The iteration complexity is optimal for first-order methods for saddle point problems~\citep{zhang2022lower}. 
 
 

\begin{remark}\rm 
We can treat the coupling implicitly: Line 5 and 6 in Algorithm \ref{alg:AOR-HB saddle} are replaced by
\begin{equation}\label{eq:implicitN}
\begin{aligned}
v_{k+1} &=  \frac{1}{1+\alpha} \left [v_k + \alpha u_{k+1} -  \frac{\alpha}{\mu_f}(2\nabla f(u_{k+1}) - \nabla f(u_k) + B^{\top} q_{k+1}) \right ],\\
q_{k+1} &=  \frac{1}{1+\alpha} \left [q_k + \alpha p_{k+1} -  \frac{\alpha}{\mu_g}(2\nabla g(p_{k+1}) - \nabla g(p_k) - B v_{k+1}) \right ].
\end{aligned} 
\end{equation}
Now $(v_{k+1}, q_{k+1})$ are coupled together and can be computed by inverting $\begin{pmatrix}
   (1+\alpha) I & \frac{\alpha}{\mu_f}B^{\top} \\ -\frac{\alpha}{\mu_g}B &    (1+\alpha)I
\end{pmatrix}$. It is sufficient to compute $\left ((1+\alpha)^2 I + \frac{\alpha^2}{\mu_f\mu_g}BB^{\top} \right)^{-1}$ or  $\left ((1+\alpha)^2 I + \frac{\alpha^2}{\mu_f\mu_g}B^{\top}B\right)^{-1}$, whichever is a relative small size matrix and can be further replaced by an inexact inner solver. We name the method as AOR-HB-saddle-I(implicit) and the convergence rate can be improved to $1-\mathcal O(\sqrt{\rho})$ with $\rho=\min \left \{ \mu_f/L_f,  \mu_g/L_g\right\}$; see Appendix \ref{app:AOR-HB-saddle-I} for the formal results.
\end{remark}

\section{Numerical Results}\label{sec: numerical}
In this section, we evaluate the performance of our AOR-HB methods on a suite of optimization problems. All numerical experiments were conducted using MATLAB R2022a on a desktop computer equipped with an Intel Core i7-6800K CPU operating at 3.4 GHz and 32GB of RAM. We compare the results against several state-of-the-art optimization algorithms from the literature.
%

\subsection{Smooth convex minimization}  

We test our AOR-HB method (Algorithm \ref{alg:AOR-HB}) and compare it with other first-order algorithms including: (i) GD: gradient descent; (ii) NAG: Nesterov acceleration~\citep{nesterov2013introductory}; (iii) HB: Polyak's momentum method~\citep{polyak1964some}; (iv) TM: triple momentum method~\citep{van2017fastest}
and (v) ADR~\citep{aujol2022convergence}. 

First, we test the algorithms using smooth multidimensional piecewise objective functions borrowed from~\cite{van2017fastest}. Let
\begin{equation}\label{eq:fex2}
    f(x)=\sum_{i=1}^p h\left(a_i^{\top} x-b_i\right)+\frac{\mu}{2}\|x\|^2, \quad h(x)= \begin{cases}\frac{1}{2} x^2 e^{-r / x}, & x>0 \\ 0, & x \leq 0\end{cases},
\end{equation}

where $A=\left[a_1, \ldots, a_p\right] \in \mathbb{R}^{d \times p}$ and $b \in \mathbb{R}^p$ with $\|A\|=$ $\sqrt{L-\mu}$. Then $f$ is $\mu$- strongly convex and $L$-smooth. We randomly generate the components of $A$ and $b$ from the normal distribution and then scale $A$ so that $\|A\|=\sqrt{L-\mu}$. We set $\mu =1, L=10^4, d = 100, p = 5$ and $r = 10^{-6}$.

\begin{minipage}{0.32\textwidth}
        \includegraphics[width=\textwidth]{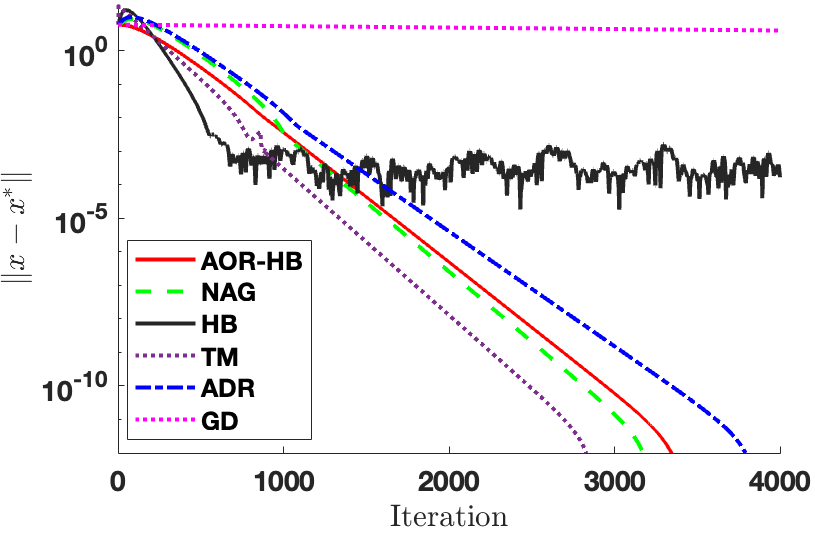}
        \captionof{figure}{Simulation results for the objective function  (\ref{eq:fex2}).}
        \label{fig:multiobj_err}
\end{minipage}%
\hfill
\begin{minipage}{0.65\textwidth}
Figure \ref{fig:multiobj_err} illustrates the logarithm of $\|x_k - x^*\|$ for each algorithm's iterates. The HB method reaches a plateau and fails to converge, while the other algorithms demonstrate global and linear convergence. As expected, GD, being non-accelerated, converges slowly due to the large condition number. The AOR-HB method performs as efficiently as NAG and ADR. The TM method achieves a convergence rate of $(1-\sqrt{\mu/L} )^2$, slightly outpacing other accelerated algorithms. However, TM requires more parameter tuning and has not yet been extended beyond strongly smooth convex optimization.
\end{minipage}

Next, we report the numerical simulations on a logistic regression problem with $\ell_2$ regularizer:
\begin{minipage}{0.32\textwidth}
        \centering
        \includegraphics[width=\textwidth]{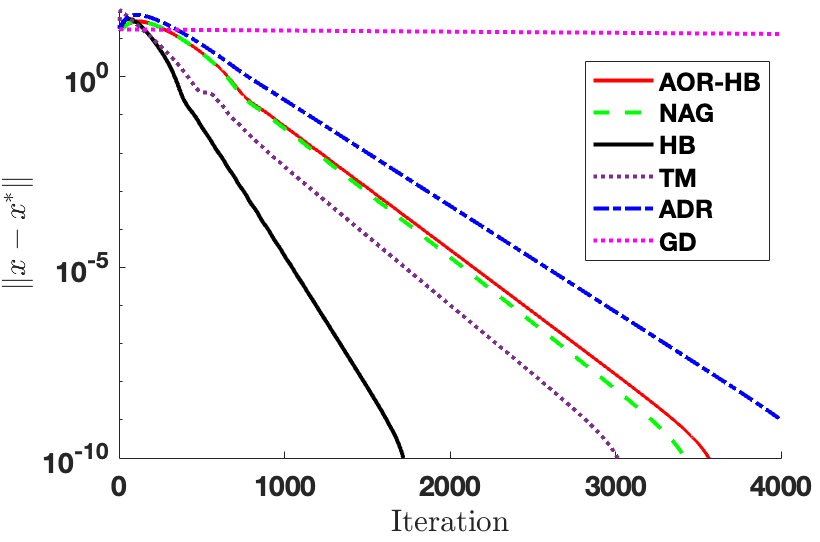}
        \captionof{figure}{Simulation results for the logistic regression problem  (\ref{eq:log reg}).}
        \label{fig:log_reg_f}
\end{minipage}%
\hfill
\begin{minipage}{0.65\textwidth}
\begin{equation}\label{eq:log reg}
    \min _{x \in \mathbb{R}^d}\left\{\sum_{i=1}^m \log \left(1+\exp \left(-b_i a_i^{\top} x\right)\right)+\frac{\lambda}{2}\|x\|^2\right\},
\end{equation}
where $\left(a_i, b_i\right) \in \mathbb{R}^d \times \{ -1, 1\}, i=1,2, \ldots, m$. 
For the logistic regression problem, $\mu = \lambda$ and $L = \lambda_{\max }\left(\sum_{i=1}^m a_i a_i^{\top}\right) + \lambda$. The data $a_i$ and $b_i$ are generated by the normal distribution and Bernoulli distribution, respectively.  We set $\lambda =0.1, d = 1000,$ and $n = 50$. As illustrated by Figure \ref{fig:log_reg_f}, this is an example where HB converges and it converges fastest. {However, such fast convergence lacks theoretical guarantees and may fail in cases like the one depicted in Figure \ref{fig:multiobj_err}. This highlights the importance of robust theoretical convergence analysis rather than relying on empirical success alone.} 
\end{minipage}

\subsection{Composite convex and non-convex minimization}
We test the performance of AOR-HB-composite method (Algorithm \ref{alg:AOR-HB-composite}) on non-smooth optimization problems and compare with two well-known methods: the Fast Iterative Shrinkage-Thresholding Algorithm (FISTA)~\cite{beck2009fast} and the Accelerated Proximal Gradient (APG) algorithm proposed in~\cite{li2015accelerated}.

\begin{minipage}{0.3\textwidth}
    \centering
    \includegraphics[width=\textwidth]{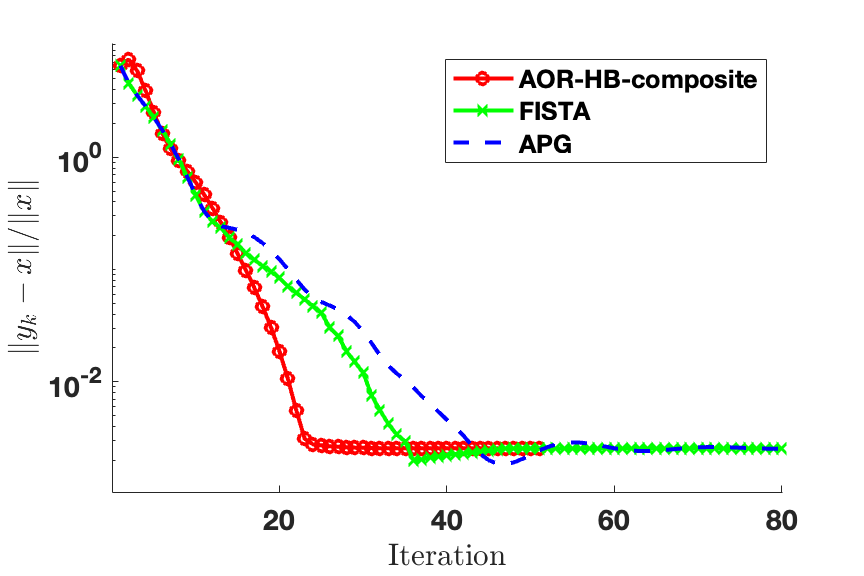}
     \captionof{figure}{Comparison of $\ell_1$ minimization methods.}
        \label{fig:lasso}
 \medskip
  \includegraphics[width=\textwidth]{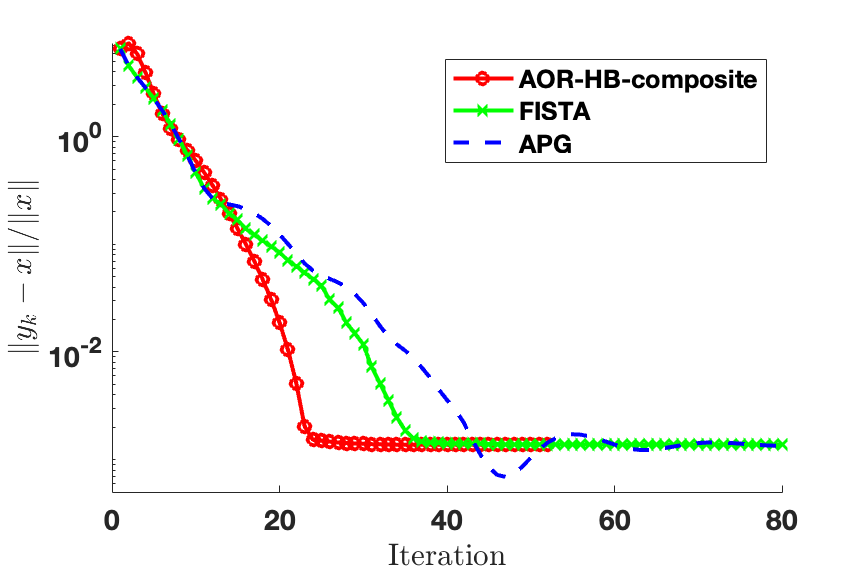}
        \captionof{figure}{Comparison of $\ell_1 - \ell_2$ minimization methods.}
        \label{fig:l1l2}
\end{minipage}%
\hfill
\begin{minipage}{0.67\textwidth}
We first consider the Lasso problem:
$$\min_{x} \frac{1}{2} \|Ax - b\|^2_2 + \lambda \|x\|_1,$$
which has wide applications in compressed sensing, signal processing, and statistical learning, among other fields \citep{Tibshi1996Regression}. We generate the matrix $A$  with size $1024\times 256$ from Gaussian random matrices and a ground-truth sparse vector $x$ of sparsity $5$. The data vector obtained by matrix-vector multiplication $b=Ax$. We set $\lambda = 0.8$ and use the step size $\alpha = 1/L$ in FISTA and APG. We plot the decay of the relative $\ell_2$ error in Figure \ref{fig:lasso}. All the methods cannot reach the accuracy of $10^{-3}$. This is because the ground-truth (sparse) solution may not be the minimizer of the corresponding minimization problem.

\smallskip
 
Then we test the non-convex $\ell_1- \ell_2$ case where the $\ell_1$ regularizer is substituted by $\|x\|_1 - \|x\|_2$. The proximal operator for $g(x) := \lambda (\|x\|_1 - \|x\|_2)$ is given in~\cite{yin2015minimization}. Using such non-smooth and non-convex function $g$ will produce solution with better sparsity. The other settings remain the same as the Lasso problem.  Figure \ref{fig:lasso} and \ref{fig:l1l2} show the convergence of the relative $\ell_2$ error, where all methods converges but AOR-HB outperforms others. 
\end{minipage}

\subsection{Saddle point problems} \label{sec: saddle app}
We compare AOR-HB-saddle (Algorithm \ref{alg:AOR-HB saddle}) with the following algorithms: (i) AG-OG: accelerated gradient-optimistic gradient method with restarting regime ~\citep{li2023nesterov}; (ii) APDG: accelerated primal-dual gradient method~\citep{kovalev2022accelerated}; (iii) { LPD: lifted primal-dual method~\citep{thekumparampil2022lifted};} (iv) EG: a variant of extra-gradient method ~\citep{mokhtari2020unified}.

We consider policy evaluation
problems in reinforcement
learning~\citep{du2017stochastic} when finding
minimum of the mean squared projected Bellman error (MSPBE):
$$\arg \min _u \frac{1}{2}\|B u-b\|_{C^{-1}}^2+\frac{1}{2}\|u\|^2.$$
The corresponding saddle point problem (see Appendix \ref{app:saddle}) is 
\begin{equation}\label{eq:MSPBE}
 \min _{u \in \mathbb{R}^m} \max _{p \in \mathbb{R}^n} \frac{1}{2}\|u\|^2 -\frac{1}{2}\|p\|_{C}^2 - \inprd{b, p}+ \inprd{Bu, p}.
\end{equation}
The saddle point formulation saves the computation cost of inverting $C$. In this example, $\mu_f = L_f = 1$, $\mu_g = \lambda_{\min}(C)$ and $L_g = \lambda_{\max}(C)$. We generate random matrices $B$ and $C$ such that $\mu_g = 1$ and the condition number $\kappa_g = \|B\|^2$. We set $m = 2500$ and $n = 50$.

\begin{minipage}{0.35\textwidth}
                \centering
        \includegraphics[width=\textwidth]{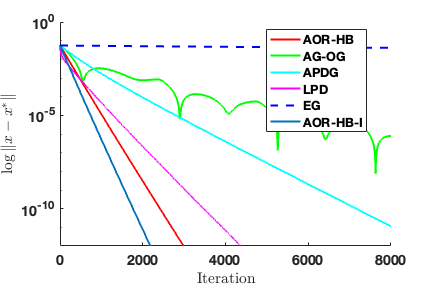}
                   \captionof{figure}{Simulation results for MSPBE (\ref{eq:MSPBE}).}
        \label{fig:MSPBE_ill}

\medskip
        \includegraphics[width=\textwidth]{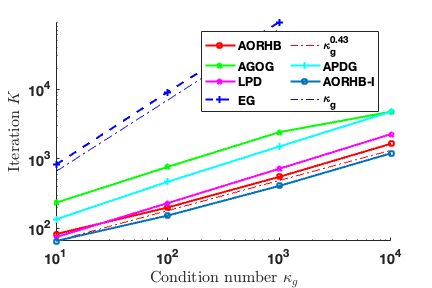}  
         \caption{ Rate of convergence.}
        \label{fig:MSPBE_ite}        
\end{minipage}%
\hfill
\begin{minipage}{0.63\textwidth}
We plot the convergence of error versus the number of iterations in Figure \ref{fig:MSPBE_ill}. {Since each method has a comparable per-iteration computational cost, the iteration count serves as a fair measure of efficiency.} We only plot the case $\kappa_g = 10^4$, but similar trends are observed for other condition numbers, where our AOR-HB-saddle methods consistently exhibit faster linear convergence compared to other algorithms.  

\smallskip

Among the methods, AOR-HB-saddle-I, the semi-implicit scheme (\ref{eq:implicitN}), achieves the fastest convergence and is overall the most time-efficient. It is preferable when the coupling term $\|B\|^2/(\mu_f \mu_g)$ exceeds $\sqrt{\kappa_f}$ and $\sqrt{\kappa_g}$, and the direct solver to compute $((1+\alpha)I + \frac{\alpha^2}{(1+\alpha)\mu_f \mu_g}BB^{\top})^{-1}$ is efficient, which is the case ($n=50$) here. {When the direct solver becomes computationally expensive, the explicit scheme AOR-HB-saddle is preferable, provided an optimized step size $\alpha$ is used.}  

\smallskip
In Figure \ref{fig:MSPBE_ite}, we plot the iteration number $K$ versus $\kappa_g$ such that $\frac{\|x_K - x^*\|}{\|x_0 - x^*\|} \leq 10^{-6}$. In this log-log scale plot, we can observe that the growth of iteration complexity is $\mathcal{O}(\sqrt{\kappa_g})$ for accelerated algorithms and $\mathcal{O}(\kappa_g)$ for EG, which matches the convergence analysis. 
\end{minipage}

Among all accelerated algorithms we have tested, our AOR-HB-saddle method requires fewer iteration steps to achieve the desired accuracy. In addition, AOR-HB-saddle has a single-loop structure and requires fewer parameters to tune, which is favorable for implementation.

\section{Concluding Remarks}\label{sec:concluding}
In conclusion, we have introduced the Accelerated Over-Relaxation Heavy-Ball (AOR-HB) methods, a significant advancement of the accelerated first order optimization algorithms. This fills a theoretical gap in heavy-ball momentum methods and opens the door to developing provable accelerated methods with potential forays into non-convex optimization scenarios.

AOR-HB comes with certain limitations. First, AOR-HB methods require the parameters $\mu$ and $L$. While this is typical for accelerated methods, exploring adaptive strategies to reduce parameter dependence is an interesting direction for future research. Second, the convergence rates established in some theorems are not as tight as those achieved by the TM method~\citep{van2017fastest,taylor2023optimal}. Finally, investigating the algorithm's performance under stochastic conditions is essential for assessing its robustness in real-world machine learning applications.

\section*{Acknowledgements}
The authors would like to thank Dr. Hao Luo and Professor Yurii Nesterov for their insightful discussions during the preparation of this work. We also extend our gratitude to the anonymous reviewers for their constructive comments and suggestions, especially for bringing our attention to the Lifted Primal-Dual (LPD) method~\cite{thekumparampil2022lifted}, which helped improve the quality of the paper. This work is supported by the National Science Foundation under grants DMS-2012465, DMS-2309777, and DMS-2309785.  

\bibliographystyle{iclr2025_conference}
\bibliography{AOR-HBref}

\begin{thebibliography}{80}
\providecommand{\natexlab}[1]{#1}
\providecommand{\url}[1]{\texttt{#1}}
\expandafter\ifx\csname urlstyle\endcsname\relax
  \providecommand{\doi}[1]{doi: #1}\else
  \providecommand{\doi}{doi: \begingroup \urlstyle{rm}\Url}\fi

\bibitem[Attouch et~al.(2000)Attouch, Goudou, and Redont]{attouch2000heavy}
Hedy Attouch, Xavier Goudou, and Patrick Redont.
\newblock The heavy ball with friction method, i. the continuous dynamical
  system: global exploration of the local minima of a real-valued function by
  asymptotic analysis of a dissipative dynamical system.
\newblock \emph{Communications in Contemporary Mathematics}, 2\penalty0
  (01):\penalty0 1--34, 2000.

\bibitem[Aujol et~al.(2022)Aujol, Dossal, and
  Rondepierre]{aujol2022convergence}
J-F Aujol, Ch~Dossal, and Aude Rondepierre.
\newblock Convergence rates of the heavy ball method for quasi-strongly convex
  optimization.
\newblock \emph{SIAM Journal on Optimization}, 32\penalty0 (3):\penalty0
  1817--1842, 2022.

\bibitem[Beck \& Teboulle(2009)Beck and Teboulle]{beck2009fast}
Amir Beck and Marc Teboulle.
\newblock A fast iterative shrinkage-thresholding algorithm for linear inverse
  problems.
\newblock \emph{SIAM journal on imaging sciences}, 2\penalty0 (1):\penalty0
  183--202, 2009.

\bibitem[Bollapragada et~al.(2022)Bollapragada, Chen, and
  Ward]{bollapragada2022fast}
Raghu Bollapragada, Tyler Chen, and Rachel Ward.
\newblock On the fast convergence of minibatch heavy ball momentum.
\newblock \emph{arXiv preprint arXiv:2206.07553}, 2022.

\bibitem[Bottou et~al.(2018)Bottou, Curtis, and
  Nocedal]{bottou2018optimization}
L{\'e}on Bottou, Frank~E Curtis, and Jorge Nocedal.
\newblock Optimization methods for large-scale machine learning.
\newblock \emph{SIAM review}, 60\penalty0 (2):\penalty0 223--311, 2018.

\bibitem[Browder \& Petryshyn(1967)Browder and
  Petryshyn]{browder1967construction}
Felix~E Browder and Wolodymyr~V Petryshyn.
\newblock Construction of fixed points of nonlinear mappings in {Hilbert}
  space.
\newblock \emph{Journal of Mathematical Analysis and Applications}, 20\penalty0
  (2):\penalty0 197--228, 1967.

\bibitem[Bubeck et~al.(2015)Bubeck, Lee, and Singh]{bubeck2015geometric}
S{\'e}bastien Bubeck, Yin~Tat Lee, and Mohit Singh.
\newblock A geometric alternative to {Nesterov}'s accelerated gradient descent.
\newblock \emph{arXiv preprint arXiv:1506.08187}, 2015.

\bibitem[Chambolle \& Pock(2011)Chambolle and Pock]{chambolle2011first}
Antonin Chambolle and Thomas Pock.
\newblock A first-order primal-dual algorithm for convex problems with
  applications to imaging.
\newblock \emph{Journal of mathematical imaging and vision}, 40\penalty0
  (1):\penalty0 120--145, 2011.

\bibitem[Chen \& Teboulle(1993)Chen and Teboulle]{chen1993convergence}
Gong Chen and Marc Teboulle.
\newblock Convergence analysis of a proximal-like minimization algorithm using
  {Bregman} functions.
\newblock \emph{SIAM Journal on Optimization}, 3\penalty0 (3):\penalty0
  538--543, 1993.

\bibitem[Chen \& Luo(2021)Chen and Luo]{chen2021unified}
Long Chen and Hao Luo.
\newblock A unified convergence analysis of first order convex optimization
  methods via strong {Lyapunov} functions.
\newblock \emph{arXiv preprint arXiv:2108.00132}, 4\penalty0 (9):\penalty0 10,
  2021.

\bibitem[Cui \& Shanbhag(2016)Cui and Shanbhag]{cui2016analysis}
Shisheng Cui and Uday~V Shanbhag.
\newblock On the analysis of reflected gradient and splitting methods for
  monotone stochastic variational inequality problems.
\newblock In \emph{2016 IEEE 55th conference on decision and control (CDC)},
  pp.\  4510--4515. IEEE, 2016.

\bibitem[Cyrus et~al.(2018)Cyrus, Hu, Van~Scoy, and Lessard]{cyrus2018robust}
Saman Cyrus, Bin Hu, Bryan Van~Scoy, and Laurent Lessard.
\newblock A robust accelerated optimization algorithm for strongly convex
  functions.
\newblock In \emph{2018 Annual American Control Conference (ACC)}, pp.\
  1376--1381. IEEE, 2018.

\bibitem[Daskalakis et~al.(2017)Daskalakis, Ilyas, Syrgkanis, and
  Zeng]{daskalakis2017training}
Constantinos Daskalakis, Andrew Ilyas, Vasilis Syrgkanis, and Haoyang Zeng.
\newblock Training gans with optimism.
\newblock \emph{arXiv preprint arXiv:1711.00141}, 2017.

\bibitem[Diakonikolas \& Jordan(2021)Diakonikolas and
  Jordan]{diakonikolas2021generalized}
Jelena Diakonikolas and Michael~I Jordan.
\newblock Generalized momentum-based methods: A {Hamiltonian} perspective.
\newblock \emph{SIAM Journal on Optimization}, 31\penalty0 (1):\penalty0
  915--944, 2021.

\bibitem[Diakonikolas \& Orecchia(2019)Diakonikolas and
  Orecchia]{diakonikolas2019approximate}
Jelena Diakonikolas and Lorenzo Orecchia.
\newblock The approximate duality gap technique: A unified theory of
  first-order methods.
\newblock \emph{SIAM Journal on Optimization}, 29\penalty0 (1):\penalty0
  660--689, 2019.

\bibitem[Drusvyatskiy et~al.(2018)Drusvyatskiy, Fazel, and
  Roy]{drusvyatskiy2018optimal}
Dmitriy Drusvyatskiy, Maryam Fazel, and Scott Roy.
\newblock An optimal first order method based on optimal quadratic averaging.
\newblock \emph{SIAM Journal on Optimization}, 28\penalty0 (1):\penalty0
  251--271, 2018.

\bibitem[Du \& Hu(2019)Du and Hu]{du2019linear}
Simon~S Du and Wei Hu.
\newblock Linear convergence of the primal-dual gradient method for
  convex-concave saddle point problems without strong convexity.
\newblock In \emph{The 22nd International Conference on Artificial Intelligence
  and Statistics}, pp.\  196--205. PMLR, 2019.

\bibitem[Du et~al.(2017)Du, Chen, Li, Xiao, and Zhou]{du2017stochastic}
Simon~S Du, Jianshu Chen, Lihong Li, Lin Xiao, and Dengyong Zhou.
\newblock Stochastic variance reduction methods for policy evaluation.
\newblock In \emph{International Conference on Machine Learning}, pp.\
  1049--1058. PMLR, 2017.

\bibitem[Ghadimi et~al.(2013)Ghadimi, Shames, and Johansson]{ghadimi2013multi}
Euhanna Ghadimi, Iman Shames, and Mikael Johansson.
\newblock Multi-step gradient methods for networked optimization.
\newblock \emph{IEEE Transactions on Signal Processing}, 61\penalty0
  (21):\penalty0 5417--5429, 2013.

\bibitem[Ghadimi et~al.(2015)Ghadimi, Feyzmahdavian, and
  Johansson]{ghadimi2015global}
Euhanna Ghadimi, Hamid~Reza Feyzmahdavian, and Mikael Johansson.
\newblock Global convergence of the heavy-ball method for convex optimization.
\newblock In \emph{2015 European control conference (ECC)}, pp.\  310--315.
  IEEE, 2015.

\bibitem[Gidel et~al.(2018)Gidel, Berard, Vignoud, Vincent, and
  Lacoste-Julien]{gidel2018variational}
Gauthier Gidel, Hugo Berard, Ga{\"e}tan Vignoud, Pascal Vincent, and Simon
  Lacoste-Julien.
\newblock A variational inequality perspective on generative adversarial
  networks.
\newblock \emph{arXiv preprint arXiv:1802.10551}, 2018.

\bibitem[Gorbunov et~al.(2022)Gorbunov, Taylor, and Gidel]{gorbunov2022last}
Eduard Gorbunov, Adrien Taylor, and Gauthier Gidel.
\newblock Last-iterate convergence of optimistic gradient method for monotone
  variational inequalities.
\newblock \emph{Advances in neural information processing systems},
  35:\penalty0 21858--21870, 2022.

\bibitem[Goujaud et~al.(2023)Goujaud, Taylor, and
  Dieuleveut]{goujaud2023provable}
Baptiste Goujaud, Adrien Taylor, and Aymeric Dieuleveut.
\newblock Provable non-accelerations of the heavy-ball method.
\newblock \emph{arXiv preprint arXiv:2307.11291}, 2023.

\bibitem[Hadjidimos(1978)]{hadjidimos1978accelerated}
Apostolos Hadjidimos.
\newblock Accelerated overrelaxation method.
\newblock \emph{Mathematics of Computation}, 32\penalty0 (141):\penalty0
  149--157, 1978.

\bibitem[Jin et~al.(2022)Jin, Sidford, and Tian]{jin2022sharper}
Yujia Jin, Aaron Sidford, and Kevin Tian.
\newblock Sharper rates for separable minimax and finite sum optimization via
  primal-dual extragradient methods.
\newblock In \emph{Conference on Learning Theory}, pp.\  4362--4415. PMLR,
  2022.

\bibitem[Korpelevich(1976)]{korpelevich1976extragradient}
Galina~M Korpelevich.
\newblock The extragradient method for finding saddle points and other
  problems.
\newblock \emph{Matecon}, 12:\penalty0 747--756, 1976.

\bibitem[Kotsalis et~al.(2022)Kotsalis, Lan, and Li]{kotsalis2022simple}
Georgios Kotsalis, Guanghui Lan, and Tianjiao Li.
\newblock Simple and optimal methods for stochastic variational inequalities,
  i: operator extrapolation.
\newblock \emph{SIAM Journal on Optimization}, 32\penalty0 (3):\penalty0
  2041--2073, 2022.

\bibitem[Kovalev et~al.(2022)Kovalev, Gasnikov, and
  Richt{\'a}rik]{kovalev2022accelerated}
Dmitry Kovalev, Alexander Gasnikov, and Peter Richt{\'a}rik.
\newblock Accelerated primal-dual gradient method for smooth and convex-concave
  saddle-point problems with bilinear coupling.
\newblock \emph{Advances in Neural Information Processing Systems},
  35:\penalty0 21725--21737, 2022.

\bibitem[Krichene et~al.(2015)Krichene, Bayen, and
  Bartlett]{krichene2015accelerated}
Walid Krichene, Alexandre Bayen, and Peter~L Bartlett.
\newblock Accelerated mirror descent in continuous and discrete time.
\newblock \emph{Advances in neural information processing systems}, 28, 2015.

\bibitem[Lei et~al.(2017)Lei, Yen, Wu, Dhillon, and Ravikumar]{lei2017doubly}
Qi~Lei, Ian En-Hsu Yen, Chao-yuan Wu, Inderjit~S Dhillon, and Pradeep
  Ravikumar.
\newblock Doubly greedy primal-dual coordinate descent for sparse empirical
  risk minimization.
\newblock In \emph{International Conference on Machine Learning}, pp.\
  2034--2042. PMLR, 2017.

\bibitem[Lessard et~al.(2016)Lessard, Recht, and Packard]{lessard2016analysis}
Laurent Lessard, Benjamin Recht, and Andrew Packard.
\newblock Analysis and design of optimization algorithms via integral quadratic
  constraints.
\newblock \emph{SIAM Journal on Optimization}, 26\penalty0 (1):\penalty0
  57--95, 2016.

\bibitem[Li et~al.(2023)Li, Yuan, Gidel, Gu, and Jordan]{li2023nesterov}
Chris~Junchi Li, Huizhuo Yuan, Gauthier Gidel, Quanquan Gu, and Michael Jordan.
\newblock Nesterov meets optimism: rate-optimal separable minimax optimization.
\newblock In \emph{International Conference on Machine Learning}, pp.\
  20351--20383. PMLR, 2023.

\bibitem[Li \& Lin(2015)Li and Lin]{li2015accelerated}
Huan Li and Zhouchen Lin.
\newblock Accelerated proximal gradient methods for nonconvex programming.
\newblock \emph{Advances in neural information processing systems}, 28, 2015.

\bibitem[Lin et~al.(2015)Lin, Mairal, and Harchaoui]{lin2015universal}
Hongzhou Lin, Julien Mairal, and Zaid Harchaoui.
\newblock A universal catalyst for first-order optimization.
\newblock \emph{Advances in neural information processing systems}, 28, 2015.

\bibitem[Liu \& Nashed(1998)Liu and Nashed]{liu1998regularization}
Fengshan Liu and M~Zuhair Nashed.
\newblock Regularization of nonlinear ill-posed variational inequalities and
  convergence rates.
\newblock \emph{Set-Valued Analysis}, 6\penalty0 (4):\penalty0 313--344, 1998.

\bibitem[Luo \& Chen(2022)Luo and Chen]{luo2022differential}
Hao Luo and Long Chen.
\newblock From differential equation solvers to accelerated first-order methods
  for convex optimization.
\newblock \emph{Mathematical Programming}, 195\penalty0 (1):\penalty0 735--781,
  2022.

\bibitem[Malitsky(2015)]{malitsky2015projected}
Yu~Malitsky.
\newblock Projected reflected gradient methods for monotone variational
  inequalities.
\newblock \emph{SIAM Journal on Optimization}, 25\penalty0 (1):\penalty0
  502--520, 2015.

\bibitem[Metelev et~al.(2024)Metelev, Rogozin, Gasnikov, and
  Kovalev]{metelev2024decentralized}
Dmitry Metelev, Alexander Rogozin, Alexander Gasnikov, and Dmitry Kovalev.
\newblock Decentralized saddle-point problems with different constants of
  strong convexity and strong concavity.
\newblock \emph{Computational Management Science}, 21\penalty0 (1):\penalty0 5,
  2024.

\bibitem[Mokhtari et~al.(2020{\natexlab{a}})Mokhtari, Ozdaglar, and
  Pattathil]{mokhtari2020unified}
Aryan Mokhtari, Asuman Ozdaglar, and Sarath Pattathil.
\newblock A unified analysis of extra-gradient and optimistic gradient methods
  for saddle point problems: Proximal point approach.
\newblock In \emph{International Conference on Artificial Intelligence and
  Statistics}, pp.\  1497--1507. PMLR, 2020{\natexlab{a}}.

\bibitem[Mokhtari et~al.(2020{\natexlab{b}})Mokhtari, Ozdaglar, and
  Pattathil]{mokhtari2020convergence}
Aryan Mokhtari, Asuman~E Ozdaglar, and Sarath Pattathil.
\newblock Convergence rate of $o(1/k)$ for optimistic gradient and
  extragradient methods in smooth convex-concave saddle point problems.
\newblock \emph{SIAM Journal on Optimization}, 30\penalty0 (4):\penalty0
  3230--3251, 2020{\natexlab{b}}.

\bibitem[Muehlebach \& Jordan(2019)Muehlebach and
  Jordan]{muehlebach2019dynamical}
Michael Muehlebach and Michael Jordan.
\newblock A dynamical systems perspective on nesterov acceleration.
\newblock In \emph{International Conference on Machine Learning}, pp.\
  4656--4662. PMLR, 2019.

\bibitem[Nemirovski(2004)]{nemirovski2004prox}
Arkadi Nemirovski.
\newblock Prox-method with rate of convergence $o (1/t)$ for variational
  inequalities with lipschitz continuous monotone operators and smooth
  convex-concave saddle point problems.
\newblock \emph{SIAM Journal on Optimization}, 15\penalty0 (1):\penalty0
  229--251, 2004.

\bibitem[Nesterov(1983)]{nesterov1983method}
Yurii Nesterov.
\newblock A method of solving a convex programming problem with convergence
  rate {$O(1/k^2)$}.
\newblock \emph{Doklady Akademii Nauk SSSR}, 269\penalty0 (3):\penalty0 543,
  1983.

\bibitem[Nesterov(2007)]{nesterov2007dual}
Yurii Nesterov.
\newblock Dual extrapolation and its applications to solving variational
  inequalities and related problems.
\newblock \emph{Mathematical Programming}, 109\penalty0 (2):\penalty0 319--344,
  2007.

\bibitem[Nesterov(2013)]{nesterov2013introductory}
Yurii Nesterov.
\newblock \emph{Introductory lectures on convex optimization: A basic course},
  volume~87.
\newblock Springer Science \& Business Media, 2013.

\bibitem[Nesterov(2018)]{nesterov2018lectures}
Yurii Nesterov.
\newblock \emph{Lectures on convex optimization}, volume 137.
\newblock Springer, 2018.

\bibitem[Nesterov \& Scrimali(2006)Nesterov and Scrimali]{nesterov2006solving}
Yurii Nesterov and Laura Scrimali.
\newblock Solving strongly monotone variational and quasi-variational
  inequalities.
\newblock 2006.

\bibitem[Ochs et~al.(2015)Ochs, Brox, and Pock]{ochs2015ipiasco}
Peter Ochs, Thomas Brox, and Thomas Pock.
\newblock {iPiasco}: inertial proximal algorithm for strongly convex
  optimization.
\newblock \emph{Journal of Mathematical Imaging and Vision}, 53:\penalty0
  171--181, 2015.

\bibitem[Pan et~al.(2023)Pan, Liu, Wang, and Zhang]{pan2023accelerated}
Rui Pan, Yuxing Liu, Xiaoyu Wang, and Tong Zhang.
\newblock Accelerated convergence of stochastic heavy ball method under
  anisotropic gradient noise.
\newblock \emph{arXiv preprint arXiv:2312.14567}, 2023.

\bibitem[Peng et~al.(2020)Peng, Dai, Zhang, and Cheng]{peng2020training}
Wei Peng, Yu-Hong Dai, Hui Zhang, and Lizhi Cheng.
\newblock Training {GANs} with centripetal acceleration.
\newblock \emph{Optimization Methods and Software}, 35\penalty0 (5):\penalty0
  955--973, 2020.

\bibitem[Polyak(1964)]{polyak1964some}
Boris~T Polyak.
\newblock Some methods of speeding up the convergence of iteration methods.
\newblock \emph{Ussr computational mathematics and mathematical physics},
  4\penalty0 (5):\penalty0 1--17, 1964.

\bibitem[Popov(1980)]{popov1980modification}
Leonid~Denisovich Popov.
\newblock A modification of the {Arrow}-{Hurwitz} method of search for saddle
  points.
\newblock \emph{Mat. Zametki}, 28\penalty0 (5):\penalty0 777--784, 1980.

\bibitem[Rockafellar(1976)]{rockafellar1976monotone}
R~Tyrrell Rockafellar.
\newblock Monotone operators and the proximal point algorithm.
\newblock \emph{SIAM journal on control and optimization}, 14\penalty0
  (5):\penalty0 877--898, 1976.

\bibitem[Saab~Jr et~al.(2022)Saab~Jr, Phoha, Zhu, and Ray]{saab2022adaptive}
Samer Saab~Jr, Shashi Phoha, Minghui Zhu, and Asok Ray.
\newblock An adaptive polyak heavy-ball method.
\newblock \emph{Machine Learning}, 111\penalty0 (9):\penalty0 3245--3277, 2022.

\bibitem[Sanz~Serna \& Zygalakis(2021)Sanz~Serna and
  Zygalakis]{sanz2021connections}
Jes{\'u}s~Mar{\'\i}a Sanz~Serna and Konstantinos~C Zygalakis.
\newblock The connections between {Lyapunov} functions for some optimization
  algorithms and differential equations.
\newblock \emph{SIAM Journal on Numerical Analysis}, 59\penalty0 (3):\penalty0
  1542--1565, 2021.

\bibitem[Scieur et~al.(2017)Scieur, Roulet, Bach, and
  d'Aspremont]{scieur2017integration}
Damien Scieur, Vincent Roulet, Francis Bach, and Alexandre d'Aspremont.
\newblock Integration methods and optimization algorithms.
\newblock \emph{Advances in Neural Information Processing Systems}, 30, 2017.

\bibitem[Shi et~al.(2022)Shi, Du, Jordan, and Su]{shi2022understanding}
Bin Shi, Simon~S Du, Michael~I Jordan, and Weijie~J Su.
\newblock Understanding the acceleration phenomenon via high-resolution
  differential equations.
\newblock \emph{Mathematical Programming}, pp.\  1--70, 2022.

\bibitem[Siegel(2019)]{siegel2019accelerated}
Jonathan~W Siegel.
\newblock Accelerated first-order methods: Differential equations and
  {Lyapunov} functions.
\newblock \emph{arXiv preprint arXiv:1903.05671}, 2019.

\bibitem[Song \& Diakonikolas(2023)Song and Diakonikolas]{song2023cyclic}
Chaobing Song and Jelena Diakonikolas.
\newblock Cyclic coordinate dual averaging with extrapolation.
\newblock \emph{SIAM Journal on Optimization}, 33\penalty0 (4):\penalty0
  2935--2961, 2023.

\bibitem[Su et~al.(2016)Su, Boyd, and Candes]{su2016differential}
Weijie Su, Stephen Boyd, and Emmanuel~J Candes.
\newblock A differential equation for modeling {Nesterov}'s accelerated
  gradient method: Theory and insights.
\newblock \emph{Journal of Machine Learning Research}, 17\penalty0
  (153):\penalty0 1--43, 2016.

\bibitem[Sun et~al.(2020)Sun, George, and Kia]{sun2020high}
Boya Sun, Jemin George, and Solmaz Kia.
\newblock High-resolution modeling of the fastest first-order optimization
  method for strongly convex functions.
\newblock In \emph{2020 59th IEEE Conference on Decision and Control (CDC)},
  pp.\  4237--4242. IEEE, 2020.

\bibitem[Sun et~al.(2019)Sun, Yin, Li, Huang, Guan, and Jiang]{sun2019non}
Tao Sun, Penghang Yin, Dongsheng Li, Chun Huang, Lei Guan, and Hao Jiang.
\newblock Non-ergodic convergence analysis of heavy-ball algorithms.
\newblock In \emph{Proceedings of the AAAI Conference on Artificial
  Intelligence}, volume~33, pp.\  5033--5040, 2019.

\bibitem[Sutskever et~al.(2013)Sutskever, Martens, Dahl, and
  Hinton]{sutskever2013importance}
Ilya Sutskever, James Martens, George Dahl, and Geoffrey Hinton.
\newblock On the importance of initialization and momentum in deep learning.
\newblock In \emph{International conference on machine learning}, pp.\
  1139--1147. PMLR, 2013.

\bibitem[Taylor \& Drori(2023)Taylor and Drori]{taylor2023optimal}
Adrien Taylor and Yoel Drori.
\newblock An optimal gradient method for smooth strongly convex minimization.
\newblock \emph{Mathematical Programming}, 199\penalty0 (1):\penalty0 557--594,
  2023.

\bibitem[Taylor et~al.(2018)Taylor, Van~Scoy, and Lessard]{taylor2018lyapunov}
Adrien Taylor, Bryan Van~Scoy, and Laurent Lessard.
\newblock Lyapunov functions for first-order methods: Tight automated
  convergence guarantees.
\newblock In \emph{International Conference on Machine Learning}, pp.\
  4897--4906. PMLR, 2018.

\bibitem[Thekumparampil et~al.(2022)Thekumparampil, He, and
  Oh]{thekumparampil2022lifted}
Kiran~K Thekumparampil, Niao He, and Sewoong Oh.
\newblock Lifted primal-dual method for bilinearly coupled smooth minimax
  optimization.
\newblock In \emph{International Conference on Artificial Intelligence and
  Statistics}, pp.\  4281--4308. PMLR, 2022.

\bibitem[Tibshirani(1996)]{Tibshi1996Regression}
Robert Tibshirani.
\newblock Regression shrinkage and selection via the lasso.
\newblock \emph{Journal of the Royal Statistical Society Series B: Statistical
  Methodology}, 58\penalty0 (1):\penalty0 267--288, 1996.

\bibitem[Tseng(1995)]{tseng1995linear}
Paul Tseng.
\newblock On linear convergence of iterative methods for the variational
  inequality problem.
\newblock \emph{Journal of Computational and Applied Mathematics}, 60\penalty0
  (1-2):\penalty0 237--252, 1995.

\bibitem[Ushiyama et~al.(2024)Ushiyama, Sato, and Matsuo]{ushiyama2024unified}
Kansei Ushiyama, Shun Sato, and Takayasu Matsuo.
\newblock A unified discretization framework for differential equation approach
  with {Lyapunov} arguments for convex optimization.
\newblock \emph{Advances in Neural Information Processing Systems}, 36, 2024.

\bibitem[Van~Scoy et~al.(2017)Van~Scoy, Freeman, and Lynch]{van2017fastest}
Bryan Van~Scoy, Randy~A Freeman, and Kevin~M Lynch.
\newblock The fastest known globally convergent first-order method for
  minimizing strongly convex functions.
\newblock \emph{IEEE Control Systems Letters}, 2\penalty0 (1):\penalty0 49--54,
  2017.

\bibitem[Wang \& Miller(2013)Wang and Miller]{wang2013scaled}
Hongbin Wang and Paul~C Miller.
\newblock Scaled heavy-ball acceleration of the {Richardson-Lucy} algorithm for
  {3D} microscopy image restoration.
\newblock \emph{IEEE Transactions on Image Processing}, 23\penalty0
  (2):\penalty0 848--854, 2013.

\bibitem[Wang et~al.(2021)Wang, Lin, and Abernethy]{wang2021modular}
Jun-Kun Wang, Chi-Heng Lin, and Jacob~D Abernethy.
\newblock A modular analysis of provable acceleration via {Polyak}'s momentum:
  Training a wide {ReLU} network and a deep linear network.
\newblock In \emph{International Conference on Machine Learning}, pp.\
  10816--10827. PMLR, 2021.

\bibitem[Wibisono et~al.(2016)Wibisono, Wilson, and
  Jordan]{wibisono2016variational}
Andre Wibisono, Ashia~C Wilson, and Michael~I Jordan.
\newblock A variational perspective on accelerated methods in optimization.
\newblock \emph{proceedings of the National Academy of Sciences}, 113\penalty0
  (47):\penalty0 E7351--E7358, 2016.

\bibitem[Wilson et~al.(2021)Wilson, Recht, and Jordan]{wilson2021lyapunov}
Ashia~C Wilson, Ben Recht, and Michael~I Jordan.
\newblock A {Lyapunov} analysis of accelerated methods in optimization.
\newblock \emph{Journal of Machine Learning Research}, 22\penalty0
  (113):\penalty0 1--34, 2021.

\bibitem[Xia et~al.(2021)Xia, Suliafu, Ji, Nguyen, Bertozzi, Osher, and
  Wang]{xia2021heavy}
Hedi Xia, Vai Suliafu, Hangjie Ji, Tan Nguyen, Andrea Bertozzi, Stanley Osher,
  and Bao Wang.
\newblock Heavy ball neural ordinary differential equations.
\newblock \emph{Advances in Neural Information Processing Systems},
  34:\penalty0 18646--18659, 2021.

\bibitem[Yin et~al.(2015)Yin, Lou, He, and Xin]{yin2015minimization}
Penghang Yin, Yifei Lou, Qi~He, and Jack Xin.
\newblock Minimization of $1-2$ for compressed sensing.
\newblock \emph{SIAM Journal on Scientific Computing}, 37\penalty0
  (1):\penalty0 A536--A563, 2015.

\bibitem[Zhang et~al.(2022)Zhang, Hong, and Zhang]{zhang2022lower}
Junyu Zhang, Mingyi Hong, and Shuzhong Zhang.
\newblock On lower iteration complexity bounds for the convex concave saddle
  point problems.
\newblock \emph{Mathematical Programming}, 194\penalty0 (1):\penalty0 901--935,
  2022.

\bibitem[Zhang et~al.(2021)Zhang, Orvieto, Daneshmand, Hofmann, and
  Smith]{zhang2021revisiting}
Peiyuan Zhang, Antonio Orvieto, Hadi Daneshmand, Thomas Hofmann, and Roy~S
  Smith.
\newblock Revisiting the role of euler numerical integration on acceleration
  and stability in convex optimization.
\newblock In \emph{International Conference on Artificial Intelligence and
  Statistics}, pp.\  3979--3987. PMLR, 2021.

\bibitem[Zhang \& Xiao(2017)Zhang and Xiao]{zhang2017stochastic}
Yuchen Zhang and Lin Xiao.
\newblock Stochastic primal-dual coordinate method for regularized empirical
  risk minimization.
\newblock \emph{Journal of Machine Learning Research}, 18\penalty0
  (84):\penalty0 1--42, 2017.

\bibitem[Zhu \& Marcotte(1996)Zhu and Marcotte]{zhu1996co}
Dao~Li Zhu and Patrice Marcotte.
\newblock Co-coercivity and its role in the convergence of iterative schemes
  for solving variational inequalities.
\newblock \emph{SIAM Journal on Optimization}, 6\penalty0 (3):\penalty0
  714--726, 1996.

\end{thebibliography}

\appendix
\section{Definitions and Standard Results}\label{app: convex}
We review some foundational results from the convex analysis.

Recall that the Bregman divergence of $f$ is defined as
$$D_{f}(y,x) : = f(y)-f(x)-\inprd{ \nabla f(x), y-x},$$
which is in general non-symmetric, i.e., $D_{f}(y,x)\neq D_{f}(x,y)$. 
A symmetrized Bregman divergence is defined as
\begin{equation*}\label{eq: symmetrized Bregman divergenve}
    \inprd{\nabla f(x)-\nabla f(y), x-y} =D_{f}(y, x)+D_{f}(x, y).
\end{equation*}
We have the following bounds on the Bregman divergence and the symmetrized Bregman divergence. 

\begin{lemma}[Section 2.1 in~\cite{nesterov2018lectures}]\label{lem: Breg bound}
    Suppose $f: \mathbb{R}^d \rightarrow \mathbb{R}$ is $\mu$-strongly convex and $L$-smooth. For any $ x, y  \in \mathbb{R}^d$,
\begin{subequations}
    \begin{align}
\label{eq:bound1}        \frac{\mu}{2}\|x-y\|^2 &\leq D_{f}(y, x) \leq \frac{ L}{2}\|x-y\|^2, \\
\label{eq:bound3}        \frac{1}{2L} \|\nabla f(x) - \nabla f(y)\|^2 &\leq  D_{f}(y, x)   \leq \frac{1}{2\mu} \|\nabla f(x) - \nabla f(y)\|^2,\\
\label{eq:bound2}         \mu\|x-y\|^2 &\leq \inprd{\nabla f(x)-\nabla f(y), x-y} \leq L\|x-y\|^2, \\
\label{eq:bound4}       \frac{1}{L} \|\nabla f(x) - \nabla f(y)\|^2 &\leq \inprd{\nabla f(x)-\nabla f(y), x-y}   \leq \frac{1}{\mu} \|\nabla f(x) - \nabla f(y)\|^2.
    \end{align}
\end{subequations}
\end{lemma}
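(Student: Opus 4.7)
The plan is to establish the four double inequalities (\ref{eq:bound1})--(\ref{eq:bound4}) in order, since the symmetrized Bregman identity $\langle \nabla f(x) - \nabla f(y), x-y\rangle = D_f(y,x) + D_f(x,y)$ displayed just above the lemma reduces (\ref{eq:bound2}) and (\ref{eq:bound4}) to the two ``squared distance'' bounds (\ref{eq:bound1}) and (\ref{eq:bound3}).

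First I would prove (\ref{eq:bound1}). The lower bound is exactly the definition of $\mu$-strong convexity. For the upper bound I would invoke the descent lemma derived from $L$-smoothness: writing $D_f(y,x) = \int_0^1 \langle \nabla f(x+t(y-x)) - \nabla f(x),\, y-x\rangle\,\mathrm{d}t$ and bounding the integrand via Cauchy--Schwarz and the Lipschitz property of $\nabla f$ yields $D_f(y,x) \le \tfrac{L}{2}\|x-y\|^2$. Inequality (\ref{eq:bound2}) is then immediate: the upper bound follows from Cauchy--Schwarz and $L$-Lipschitzness of $\nabla f$, and the lower bound follows from summing (\ref{eq:bound1}) applied to $D_f(y,x)$ and $D_f(x,y)$ via the symmetrized identity.

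The main technical step is (\ref{eq:bound3}), which I would prove by two auxiliary-function constructions. For the upper bound, set $\phi(z) := f(z) - \langle \nabla f(x), z\rangle$; this is $\mu$-strongly convex with unique minimizer $x$ (since $\nabla \phi(x) = 0$), and $\phi(y) - \phi(x) = D_f(y,x)$. Minimizing the strong-convexity inequality $\phi(w) \ge \phi(z) + \langle \nabla \phi(z), w-z\rangle + \tfrac{\mu}{2}\|w-z\|^2$ in $w$ yields the dual bound $\phi(z) - \phi(x) \le \tfrac{1}{2\mu}\|\nabla \phi(z)\|^2$, which at $z = y$ is the desired upper bound. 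For the lower bound, set $\psi(z) := D_f(z,x)$, which is $L$-smooth, non-negative, and attains its minimum $0$ at $z=x$; applying the descent lemma to $\psi$ at the gradient-step point $z - \tfrac{1}{L}\nabla \psi(z)$ and using $\psi \ge 0$ gives $\psi(z) \ge \tfrac{1}{2L}\|\nabla \psi(z)\|^2$, i.e., the Baillon--Haddad-style bound $D_f(y,x) \ge \tfrac{1}{2L}\|\nabla f(y) - \nabla f(x)\|^2$.

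Finally, (\ref{eq:bound4}) follows directly by summing (\ref{eq:bound3}) applied to $D_f(y,x)$ and $D_f(x,y)$ and invoking the symmetrized Bregman identity. The main obstacle is (\ref{eq:bound3}): the two directions require genuinely different techniques --- a strong-convexity duality argument for the upper bound and a one-step gradient-descent argument combined with non-negativity of the Bregman divergence for the lower bound --- whereas (\ref{eq:bound1}), (\ref{eq:bound2}), and (\ref{eq:bound4}) are then essentially bookkeeping.
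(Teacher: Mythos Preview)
Your proof is correct and follows the standard arguments for these inequalities. However, the paper does not actually give a proof of this lemma: it is stated with a citation to Section~2.1 of Nesterov's \emph{Lectures on Convex Optimization} and no proof environment follows. So there is nothing in the paper to compare your argument against, beyond noting that the auxiliary-function constructions you use for (\ref{eq:bound3}) --- shifting by a linear term to move the minimizer to $x$, then applying either the strong-convexity PL bound or the descent-lemma-plus-nonnegativity trick --- are precisely the arguments Nesterov gives in the cited reference, and your reductions of (\ref{eq:bound2}) and (\ref{eq:bound4}) to (\ref{eq:bound1}) and (\ref{eq:bound3}) via the symmetrized Bregman identity are exactly how the paper intends these to be read.
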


 The following three-point identity on the Bregman divergence will be used to replace the identity of squares.
\begin{lemma}[Bregman divergence identity~\citep{chen1993convergence}]
If function $f: \mathbb{R}^d \rightarrow \mathbb{R}$ is differentiable, then for any $x, y, z \in \mathbb{R}^d$, it holds that
\begin{equation}\label{eq: Bregman divergence identity}
   \langle\nabla f(y)-\nabla f(x), y-z\rangle=D_{f}(z, y)+D_{f}(y, x)-D_{f}(z, x). 
\end{equation}
\end{lemma}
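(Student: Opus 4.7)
The plan is to prove this three-point identity by direct algebraic expansion, using only the defining formula of the Bregman divergence. Since the statement must hold for any differentiable $f$ and arbitrary $x, y, z \in \mathbb{R}^d$, no convexity, smoothness, or inequality is needed; the argument reduces entirely to substitution and cancellation.

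First I would unfold each of the three terms on the right-hand side via the definition
\[D_f(u,v) = f(u) - f(v) - \langle \nabla f(v), u-v\rangle,\]
producing occurrences of $f(x)$, $f(y)$, $f(z)$ together with gradient inner products evaluated at $\nabla f(x)$ and $\nabla f(y)$. The key observation is that in the signed combination $D_f(z,y) + D_f(y,x) - D_f(z,x)$ every function value cancels telescopically, leaving exactly
\[-\langle \nabla f(y), z-y\rangle - \langle \nabla f(x), y-x\rangle + \langle \nabla f(x), z-x\rangle.\]
Then I would group the two terms containing $\nabla f(x)$, obtaining $\langle \nabla f(x), (z-x)-(y-x)\rangle = \langle \nabla f(x), z-y\rangle$, and combine with the remaining $\nabla f(y)$ term to conclude $\langle \nabla f(x) - \nabla f(y), z-y\rangle$, which equals $\langle \nabla f(y) - \nabla f(x), y-z\rangle$, the left-hand side.

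The only real obstacle is sign management: two minus signs arise from the definition of each Bregman divergence, one more from subtracting $D_f(z,x)$, and a final flip is needed to pass from $\langle \nabla f(x) - \nabla f(y), z-y\rangle$ to the stated form $\langle \nabla f(y) - \nabla f(x), y-z\rangle$. Beyond this bookkeeping there is no substantive difficulty; a useful sanity check is that the identity is simply the Bregman analogue of the trivial vector decomposition $(z-y) = (z-x) - (y-x)$ weighted appropriately by gradients, which is precisely what the grouping step recovers.
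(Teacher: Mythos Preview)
Your proposal is correct and follows exactly the same approach as the paper: write out the three Bregman divergences from the definition and observe that direct calculation yields the identity. In fact, your write-up is more explicit than the paper's proof, which simply lists the three definitions and then states ``Direct calculation gives the identity''; your cancellation and grouping steps are precisely that direct calculation spelled out.
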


\begin{proof}
By definition,
$$
\begin{aligned}
D_{f}(z, y)&=f(z)-f(y)-\langle\nabla f(y), z-y\rangle, \\
D_{f}(y, x)&=f(y)-f(x)-\langle\nabla f(x), y-x\rangle, \\
D_{f}(z, x)&=f(z)-f(x)-\langle\nabla f(x), z-x\rangle.
\end{aligned}
$$
Direct calculation gives the identity.
\end{proof}

\section{Related Works}\label{app:related}

\subsection{Extensions/Applications of heavy-ball methods}\label{app:HB related}

While the acceleration guarantee has not yet proved rigorously, applications of heavy-ball methods including extension to constrained and distributed optimization problems have confirmed its performance benefits over the standard gradient-based methods~\citep{wang2013scaled,ochs2015ipiasco,ghadimi2013multi,diakonikolas2021generalized}. 

\cite{sutskever2013importance} showed that stochastic gradient descent with momentum improves the training of deep and recurrent neural networks. Also, using heavy-ball flow improves neural ODEs training and inference~\citep{xia2021heavy}. Recently, accelerated convergence of stochastic heavy-ball methods has been established in~\cite{pan2023accelerated,bollapragada2022fast,wang2021modular} but only for quadratic objectives. To get rid of the hyperparameters used in the heavy-ball methods,~\cite{saab2022adaptive} proposed an adaptive heavy-ball that estimates the Polyak’s optimal hyper-parameters at each iteration.

\subsection{First-order methods and dynamic system}\label{app:ODE models}

One approach to better understand the mechanism of the iterative method is the continuous-time analysis: derive an ordinary differential equation (ODE) model which coincides with the iterative method taking step size close to zero. Starting from the accelerated first-order methods for unconstrained optimization, an important milestone in this direction is to understand the acceleration from the variational perspective~\citep{su2016differential,wibisono2016variational}.  While the iterative methods are first-order, the continuous-time dynamics proposed for accelerated methods are high-order or high-resolution ODEs~\citep{shi2022understanding,sun2020high,muehlebach2019dynamical,attouch2000heavy}. With the continuous-time dynamic, novel accelerated methods are proposed by time discretization of the ODE model and usually the behaviour of the dynamic facilitates the convergence analysis of the 
iterative methods~\citep{krichene2015accelerated,luo2022differential,aujol2022convergence,wilson2021lyapunov,siegel2019accelerated}. 

Due to the appealing results, systematic framework to draw connection between the dynamic system and the accelerated iterative method gains lots of interest~\citep{scieur2017integration,ushiyama2024unified,taylor2018lyapunov,sanz2021connections}.  In fact, the theory and methods developed for other problem classes often build upon the work done in unconstrained optimization~\citep{diakonikolas2019approximate}.

\subsection{Extra-gradient methods beyond convex optimization}\label{app: extra-gradient}

The extra-gradient method, also known as optimistic gradient method can be formulated as:
$$x_{k+1} = x_{k} - \alpha_k (2F(x_{k}) - F(x_{k-1}))), $$
where $F(\cdot)$ is a (strongly) monotone and Liptschitz continuous operator. The origin of the extra-gradient method was proposed by \citep{popov1980modification,korpelevich1976extragradient} for saddle point problems, in an equivalent form
$$
\tilde x_{k} = x_k - F(\tilde x_{k-1}), \quad x_{k+1} = x_{k} -  F(\tilde x_k).
$$
The anchoring of the gradient was introduced to circumvent the issue that the standard gradient method does not always converge. This algorithm was subsequently studied by, to name just a few among others, \cite{chambolle2011first,malitsky2015projected,nesterov2007dual} under various settings, and have recently attracted considerable interest for its wild application in machine learning such as training generative adversarial networks \citep{gidel2018variational,daskalakis2017training,cui2016analysis,peng2020training}. The optimal rate of convergence $\mathcal O(1/k)$ is achieved by \citep{korpelevich1976extragradient,nemirovski2004prox,gorbunov2022last,mokhtari2020convergence}. When the operator is $\mu$-strongly monotone and $L$-Liptchitz continuous, the method was shown to exhibit a geometric convergence rate
\cite{tseng1995linear,nesterov2006solving,kotsalis2022simple,song2023cyclic}. However, the iteration complexity of $\mathcal O(L/\mu |\log \epsilon|)$ is suboptimal when applied to convex optimization problems or saddle point problems.

\subsection{Applications of strongly-convex-strongly concave saddle point problems with bilinear coupling}\label{app:saddle}

 A classical application is the regularized empirical risk minimization (ERM) with linear predictors, which is a classical supervised learning problem. Given a data matrix $B = [b_1, b_2, \cdots, b_n]^{\top} \in \mathbb{R}^{n \times m}$ where $b_i \in \mathbb{R}^m$ 
is the feature vector of the $i$-th data entry, the ERM problem aims to solve
\begin{equation}\label{eq: comp opt form}
    \min _{u \in \mathbb{R}^m} g(Bu)+f(u),
\end{equation}
where $g: \mathbb{R}^n \to \mathbb{R}$ is some strongly convex loss function, $f: \mathbb{R}^m \to \mathbb{R}$ is a strongly convex regularizer and $u \in \mathbb{R}^m$ is the linear predictor. Equivalently, we can solve the saddle point problem 
\begin{equation}\label{eq: saddle form}
   \min _{u \in \mathbb{R}^m} \max _{p \in \mathbb{R}^n}\left\{p^{\top} Bu-g^*(p)+f(u)\right\}.
\end{equation}
The saddle point formulation is favorable in many scenarios, for instance~\citep{zhang2017stochastic,du2019linear,lei2017doubly}.

Another application is policy evaluation
problems in reinforcement
learning when finding
minimum the mean squared projected Bellman error~\citep{du2017stochastic}:
$$\arg \min _u \frac{1}{2}\|B u-b\|_{C^{-1}}^2+\frac{1}{2}\|u\|^2,$$
where $B, C$ are given matrices.
The corresponding saddle point problem is 
$$
 \min _{u \in \mathbb{R}^m} \max _{p \in \mathbb{R}^n} \frac{1}{2}\|u\|^2 -\frac{1}{2}\|p\|_{C}^2 - \inprd{b, p}+ \inprd{Bu, p}.
$$
The saddle point formulation saves the computation cost of inverting $C$. 
\section{Proof of Theorem \ref{thm:convergence rate of AOR-HB}}\label{app:proof of convergence}

Consider the Lyapunov function
\begin{equation}\label{eq: AGD Lyapunov}
\begin{aligned}
\mathcal{E}(\bfx) = \mathcal{E}(x, y) :&=f(x) - f(x^*)  + \frac{\mu}{2}\| y-x^*\|^2 =D_f(x, x^*)  + \frac{\mu}{2}\| y-x^*\|^2,
\end{aligned}
\end{equation}
and the modified Lyapunov function
\begin{equation}\label{eq: AGD discrete Lyapunov 2}
\begin{aligned}
\mathcal{E}^{\alpha}(\bfx) = \mathcal{E}^{\alpha}(x, y):&= \mathcal{E}(x, y) + \alpha \inprd{\nabla f(x) - \nabla f(x^*), y - x^*}\\
&=D_f(x, x^*)+ \frac{\mu}{2}\|y-x^{*}\|^2 + \alpha \inprd{\nabla f(x) - \nabla f(x^*), y - x^*}.
\end{aligned}
\end{equation}
The novelty of $\mathcal{E}^{\alpha}(x, y)$ is the inclusion of the cross term $\alpha \inprd{\nabla f(x) - \nabla f(x^*), y - x^*}$. 

We denote the right-hand side of the accelerated gradient descent flow (\ref{eq:AGD flow}) as $$\mathcal G(\bfx) = \mathcal G(x,y): =\begin{pmatrix}
y - x ,\\
x - y - \frac{1}{\mu}\nabla f(x)
\end{pmatrix}.$$

\begin{lemma}[Strong Lyapunov property~\citep{luo2022differential}] \label{lem: strong Lya AOR appendix}
\begin{equation}\label{eq:strongLyapunov AGD}
-\inprd {\nabla \mathcal E(x, y), \mathcal G(x, y)} \geq  \mathcal E(x, y) + \frac{\mu}{2}\|x-y\|^2, \quad \forall ~ x, y \in \mathbb{R}^d.
\end{equation} 
\end{lemma}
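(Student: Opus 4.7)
The plan is a direct calculation, and indeed the excerpt already sketches it earlier when it verifies the strong Lyapunov property ``For the system (\ref{eq:AGD flow}), let $\mathcal E(x, y) = \ldots$''. So my proof would essentially mirror that computation, but organized as a self-contained argument for the lemma.

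First I would compute the gradient of the Lyapunov function explicitly: $\nabla_x \mathcal E(x,y) = \nabla f(x)$ and $\nabla_y \mathcal E(x,y) = \mu(y-x^*)$. Then I would pair this against $\mathcal G(x,y) = (y-x,\, x - y - \mu^{-1}\nabla f(x))^\top$ and expand:
\begin{equation*}
-\inprd{\nabla \mathcal E(x,y), \mathcal G(x,y)}
= -\inprd{\nabla f(x), y-x} - \mu\inprd{y-x^*, x-y} + \inprd{y-x^*, \nabla f(x)}.
\end{equation*}
The two gradient terms combine into $\inprd{\nabla f(x), x^*-x+y-x^*-(y-x)} $... actually more cleanly, collect them as $\inprd{\nabla f(x), x - x^* + (y - x) - (y - x)} = \inprd{\nabla f(x), x - x^*}$ after the gradient pieces cancel, leaving
\begin{equation*}
-\inprd{\nabla \mathcal E, \mathcal G} = \inprd{\nabla f(x), x-x^*} - \mu\inprd{x-y, y-x^*}.
\end{equation*}

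Next I would apply the polarization identity $2\inprd{x-y, y-x^*} = \|x-x^*\|^2 - \|x-y\|^2 - \|y-x^*\|^2$ to rewrite the cross term as
\begin{equation*}
-\mu\inprd{x-y, y-x^*} = -\tfrac{\mu}{2}\|x-x^*\|^2 + \tfrac{\mu}{2}\|x-y\|^2 + \tfrac{\mu}{2}\|y-x^*\|^2.
\end{equation*}
Finally, I would invoke $\mu$-strong convexity of $f$ in the form $\inprd{\nabla f(x), x-x^*} \geq f(x) - f(x^*) + \tfrac{\mu}{2}\|x-x^*\|^2$ (obtained by rearranging the defining inequality with roles $y=x^*$). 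The $\tfrac{\mu}{2}\|x-x^*\|^2$ term cancels against the $-\tfrac{\mu}{2}\|x-x^*\|^2$ produced by the polarization, leaving exactly $\mathcal E(x,y) + \tfrac{\mu}{2}\|x-y\|^2$ on the right-hand side. This is the desired inequality, and since $\mathcal E(x,y) \geq 0$, it also implies the weak Lyapunov property $-\inprd{\nabla \mathcal E, \mathcal G} \geq 0$ as a by-product.

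There is no real obstacle in this proof — the only nontrivial step is recognizing that the cross term $-\mu\inprd{x-y, y-x^*}$ is not a sign-definite quantity on its own, and that its unfavorable piece $-\tfrac{\mu}{2}\|x-x^*\|^2$ is exactly cancelled by the strong convexity of $f$. This cancellation is the whole reason the $2\times 2$ first-order formulation (\ref{eq:AGD flow}) admits such a clean Lyapunov analysis, which the paper emphasizes as the conceptual advantage over working with the second-order ODE (\ref{eq: HB flow}) or its velocity form (\ref{eq:velocity}).
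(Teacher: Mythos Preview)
Your proof is correct and follows essentially the same approach as the paper's own proof: compute $-\inprd{\nabla \mathcal E,\mathcal G} = \inprd{\nabla f(x), x-x^*} - \mu\inprd{x-y, y-x^*}$, expand the cross term via the identity of squares, and absorb the resulting $-\tfrac{\mu}{2}\|x-x^*\|^2$ using $\mu$-strong convexity. The paper's proof is simply a more compressed presentation of the same calculation.
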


\begin{proof}
    By direct calculation and the $\mu$-convexity of $f$,
\begin{equation*}
\begin{aligned}
	{}&-\inprd {\nabla \mathcal E(x, y), \mathcal G(x, y)}  = 
\langle \nabla f(x), x-x^* \rangle -\mu \langle x-y,y-x^*\rangle\\
={}&\langle \nabla f(x), x-x^* \rangle -\frac{\mu}{2}
\left(\| x-x^*\|^2- \| x-y\|^2-\|y-x^*\|^2\right)\\
\geq {}& f(x)-f(x^*)+\frac{\mu}{2}\| y-x^*\|^2 +
	\frac{\mu}{2}\| x-y\|^2
	={}\mathcal E(x, y) 
	+\frac{\mu}{2}\| x-y\|^2.
\end{aligned}
\end{equation*}
\end{proof}

\begin{lemma}\label{lem:cross Breg}   Suppose $f: \mathbb{R}^d \rightarrow \mathbb{R}$ is $\mu$-strongly convex and $L$-smooth. Denote by $\rho = \mu/L$. For any two vectors  $\bfx_{k}, \bfx_{k+1}\in \mathbb R^{2d}$ and $\alpha >0$, we have the following inequality on the Bregman divergence of $\mathcal E$ defined by (\ref{eq: AGD Lyapunov})
\begin{align*}
\left (1- \frac{\alpha}{\sqrt{\rho}}\right)D_{\mathcal E} (\bfx_{k}, \bfx_{k+1}) &\leq {}D_{\mathcal E} (\bfx_{k}, \bfx_{k+1}) \pm \alpha \inprd{ y_{k+1} - y_k ,\nabla f( x_{k+1}) -\nabla f(x_k) }\\
&\leq  \left (1+ \frac{\alpha}{\sqrt{\rho}}\right)D_{\mathcal E} (\bfx_{k}, \bfx_{k+1}).
\end{align*}
\end{lemma}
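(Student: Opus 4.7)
The plan is to first unpack the Bregman divergence $D_{\mathcal E}$ into an $x$-part and a $y$-part, then bound the unwanted cross term by splitting it with a weighted AM--GM using the smoothness of $f$. First I will exploit the separable structure of $\mathcal E(x,y) = D_f(x,x^*) + \frac{\mu}{2}\|y-x^*\|^2$: since $\nabla f(x^*)=0$, the Bregman divergence of $D_f(\cdot,x^*)$ in the $x$ variable is exactly $D_f(x_k,x_{k+1})$, and the $y$-part is a simple quadratic, so
\[
D_{\mathcal E}(\bfx_k,\bfx_{k+1}) = D_f(x_k,x_{k+1}) + \frac{\mu}{2}\|y_k-y_{k+1}\|^2.
\]
This reduces the claimed inequality to bounding
\[
\alpha\,\bigl|\inprd{y_{k+1}-y_k,\; \nabla f(x_{k+1})-\nabla f(x_k)}\bigr|
\;\leq\; \frac{\alpha}{\sqrt{\rho}}\left(D_f(x_k,x_{k+1})+\frac{\mu}{2}\|y_k-y_{k+1}\|^2\right).
\]

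Next I will apply Cauchy--Schwarz to the left-hand side and then a weighted AM--GM of the form $ab \leq \tfrac{c}{2}a^2 + \tfrac{1}{2c}b^2$ with $a=\|y_{k+1}-y_k\|$, $b=\|\nabla f(x_{k+1})-\nabla f(x_k)\|$. To convert the $b^2$ term into a multiple of $D_f(x_k,x_{k+1})$, I will use the standard smoothness bound from Lemma~\ref{lem: Breg bound}, namely \eqref{eq:bound3}: $\|\nabla f(x_{k+1})-\nabla f(x_k)\|^2 \leq 2L\, D_f(x_k,x_{k+1})$. The weight should be chosen so that the two resulting coefficients are equal: matching $\tfrac{c}{2}$ against $\tfrac{\mu}{2\sqrt{\rho}}\cdot\tfrac{1}{\alpha}\cdot\alpha$ and $\tfrac{L}{c}$ against $\tfrac{1}{\sqrt{\rho}}$ both give $c=\sqrt{\mu L}$, which is precisely the scale at which the two terms balance to produce the common factor $\alpha/\sqrt{\rho}$.

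Once this single inequality is in hand, both sides of the lemma follow immediately: for the upper bound I add $\pm\alpha\inprd{\cdot,\cdot}$ to $D_{\mathcal E}(\bfx_k,\bfx_{k+1})$ and use the AM--GM estimate; for the lower bound I subtract the same quantity. I do not anticipate any real obstacle---the proof is essentially a careful Cauchy--Schwarz plus AM--GM argument, with the only subtlety being the correct choice of the balancing weight $c=\sqrt{\mu L}$ that makes both terms scale as $1/\sqrt{\rho}$, and the observation that the separability of $\mathcal E$ in $(x,y)$ means no mixed second-derivative contributions appear in $D_{\mathcal E}$.
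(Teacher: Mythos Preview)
Your proposal is correct and follows essentially the same approach as the paper: decompose $D_{\mathcal E}(\bfx_k,\bfx_{k+1})=D_f(x_k,x_{k+1})+\tfrac{\mu}{2}\|y_k-y_{k+1}\|^2$, then bound the cross term via Cauchy--Schwarz and the weighted AM--GM with weight $\sqrt{\mu L}$, invoking the smoothness bound \eqref{eq:bound3}. The only cosmetic difference is that the paper justifies the decomposition via the three-point Bregman identity~\eqref{eq: Bregman divergence identity} rather than your observation that $\nabla f(x^*)=0$; both routes give the same formula.
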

\begin{proof}
Using the identity of squares  (for $y$) and Bregman divergence identity (\ref{eq: Bregman divergence identity}) (for $x$), we have the component form of 
$$
D_{\mathcal E} (\bfx_{k}, \bfx_{k+1}) = D_f(x_k, x_{k+1})+ \frac{\mu}{2}\|y_k-y_{k+1}\|^2.
$$
By Cauchy-Schwarz inequality and bound (\ref{eq:bound3}) in Lemma~\ref{lem: Breg bound}, we have
 \begin{align*}
\alpha \left |\inprd{ y_{k+1} - y_k ,\nabla f( x_{k+1}) -\nabla f(x_k) }\right | \leq&~ \frac{ \alpha }{2\sqrt{\mu L}}\| \nabla f(x_k) - \nabla f(x_{k+1})\|^2 + \frac{ \alpha \sqrt{L\mu}}{2}\|y_k-y_{k+1}\|^2\\
          \leq &~\alpha \sqrt{ \frac{L}{\mu}}D_f(x_k, x_{k+1})+ \frac{\alpha \sqrt{L\mu}}{2}\|y_k-y_{k+1}\|^2 \\
              =&~\alpha \sqrt{ \frac{L}{\mu}} \left(D_f(x_k, x_{k+1})+ \frac{\mu}{2}\|y_k-y_{k+1}\|^2\right).
\end{align*}
\end{proof}

By Lemma \ref{lem:cross Breg} and $\mathcal{E}(\bfx) =D_{\mathcal E} (\bfx, \bfx^*) $ where $\bfx^*:= (x^*, x^*)^{\top}$, it is straightforward to verify the following bounds for $\mathcal{E}, ~ \mathcal{E}^{\alpha}$ as Lyapunov functions. Assume $ 0< \alpha \leq \sqrt{\rho}$ with $\rho = \frac{\mu}{L}$. Then
\begin{equation}\label{eq:EalphaA1}
0\leq  \mathcal E^{\alpha} (\bfx)  \leq  2  \mathcal E(\bfx), \quad \forall~ \bfx = (x, y)^{\top} \in \mathbb{R}^{2d}.
\end{equation}

Now we are in the position to prove our first main result. 
\begin{proof}[Proof of Theorem \ref{thm:convergence rate of AOR-HB}]
We write (\ref{AOR-HB}) as a correction of Implicit Euler discretization of  (\ref{eq:AGD flow}):
 \begin{equation}\label{eq:correctionIEappendix}
 \bfx_{k+1}  - \bfx_k = \alpha \mathcal G( \bfx_{k+1}) - \alpha 
 \begin{pmatrix}
    y_{k+1} - y_k \\
 \frac{1}{\mu}(   \nabla f(x_{k+1}) - \nabla f(x_k))
\end{pmatrix} .
\end{equation}
and substitude into the identity of $\mathcal E$ 
\begin{align*}
& \mathcal E(\bfx_{k+1}) - \mathcal E(\bfx_{k}) = \inprd{\nabla \mathcal E(\bfx_{k+1}), \bfx_{k+1} - \bfx_{k}} - D_{\mathcal E} (\bfx_{k}, \bfx_{k+1})\\
 & = \alpha \inprd{\nabla \mathcal E(\bfx_{k+1}), \mathcal G( \bfx_{k+1})} - \alpha \inprd{\nabla \mathcal E(\bfx_{k+1}), \begin{pmatrix}
    y_{k+1} - y_k \\
 \frac{1}{\mu}(   \nabla f(x_{k+1}) - \nabla f(x_k))
\end{pmatrix}}  - D_{\mathcal E} (\bfx_{k}, \bfx_{k+1}).
\end{align*}
We write out the component form of the middle term and split it as
\begin{equation*}
\begin{aligned}
& \alpha \inprd{ \nabla f( x_{k+1}) -\nabla f(x^*), y_{k+1} -y_k } + \alpha \inprd{  y_{k+1} -x^*,\nabla f( x_{k+1}) -\nabla f(x_k) } \\
=~& \alpha  \inprd{\nabla f( x_{k+1}) -\nabla f(x^*), y_{k+1} -x^*} +  \alpha \inprd{ y_{k+1} - y_k ,\nabla f( x_{k+1}) -\nabla f(x_k) } \\
&- \alpha  \inprd{  y_k -x^* ,\nabla f( x_{k}) -\nabla f(x^*)}.
\end{aligned}
\end{equation*}
Substitute back to (\ref{eq:Eqdiff}) and rearrange terms, we obtain the following identity:
\begin{equation}\label{eq:EalphaAbridgeappendix}
\begin{aligned}
   \mathcal E^{\alpha}(\bfx_{k+1})-\mathcal E^{\alpha} (\bfx_{k}) = &~ \alpha \inprd{\nabla \mathcal E(\bfx_{k+1}), \mathcal G (\bfx_{k+1} )}\\
   &- D_{\mathcal E} (\bfx_{k}, \bfx_{k+1}) - \alpha \inprd{ y_{k+1} - y_k ,\nabla f( x_{k+1}) -\nabla f(x_k) },
\end{aligned}
\end{equation}

By Lemma \ref{lem:cross Breg}, we can drop the terms in the second line for $0\leq \alpha \leq \sqrt{\rho}$. Use the strong Lyapunov property (\ref{eq:strongLyapunov AGD}) and bound (\ref{eq:EalphaA1}), we have
\begin{equation}\label{eq: decay of E quadratic appendix}
     \mathcal E^{\alpha}(\bfx_{k+1})-\mathcal E^{\alpha} (\bfx_{k})  \leq - \alpha \mathcal E(\bfx_{k+1}) \leq - \frac{\alpha}{2}\mathcal E^{\alpha}(\bfx_{k+1}),
\end{equation}
which implies the global linear convergence of $\mathcal E^{\alpha}$:
$$\mathcal E^{\alpha}(\bfx_{k+1}) \leq \frac{1}{1 + \alpha/2} \mathcal E^{\alpha} (\bfx_{k}) \leq \left (\frac{1}{1+\alpha/2} \right)^{k+1} \mathcal E^{\alpha} (\bfx_0) , \quad k  \geq 0.$$
Moreover, (\ref{eq: decay of E quadratic appendix}) implies 
$$
\mathcal E(\bfx_{k+1}) \leq \frac{1}{\alpha} (\mathcal E^{\alpha} (\bfx_{k}) - \mathcal E^{\alpha}(\bfx_{k+1})) \leq  \frac{1}{\alpha}\mathcal E^{\alpha} (\bfx_{k}) \leq   \frac{1}{\alpha}\left (\frac{1}{1+\alpha/2} \right)^{k} \mathcal E^{\alpha} (\bfx_0),
$$
which coincides with (\ref{eq: linear conv AGD}) with $C_0 =  \frac{1}{\alpha}  \mathcal E^{\alpha} (\bfx_0) \geq 0$.
\end{proof}

{ \section{AOR-HB method for non-strongly convex minimization}\label{app: AOR-HB-0 convergence}

In this section, we present AOR-HB for non-strongly convex case, i.e., $\mu = 0$. The convergence analysis is motivated by the Hessian-driven Nesterov
accelerated gradient (HNAG) flow proposed in \cite{chen2021unified}.

\subsection{AOR-HB-0 method}
Consider the rescaled rotated gradient flow 
\begin{equation*}
	\begin{aligned}
		x' = {}&y-x,\\
		y'={}&-\frac{1}{\gamma}\nabla f(x),
	\end{aligned}
\end{equation*}
where $\gamma > 0$ is a time rescaling parameter. This is the degenerated version of the rotated gradient flow~\cite{chen2021unified} for non-strongly convex cases.

Using AOR technique to discretize, we get the update rule: given $(x_k, y_k)$, compute  
\begin{equation}\label{eq:xyk}
\begin{aligned}
\frac{x_{k+1}  - x_k}{\alpha_k} &= y_k - x_{k+1}, \\
\frac{y_{k+1} - y_k}{\alpha_k} &=  -  \frac{1}{\gamma_k}(2\nabla f(x_{k+1}) - \nabla f(x_k)).
\end{aligned}
\end{equation}
We shall choose the parameters
\begin{equation*}
\alpha_k = \frac{2}{k+1}, \quad \gamma_k = \alpha_k^2 L.
\end{equation*}

By eliminating $y_k$'s, we get the AOR-HB-0 method:
\begin{equation*}
x_{k+1} = x_k - \frac{k}{k+3} \frac{1}{L} \big(2\nabla f(x_k) - \nabla f(x_{k-1})\big) + \frac{k}{k+3} (x_k - x_{k-1}).
\end{equation*}  

\subsection{Equivalence to HNAG}
Introduce another parameter $\beta_k > 0$ and change of variable
\begin{equation}\label{eq:vy}
y_k   = v_k - \beta_k \nabla f(x_k).
\end{equation}
Substitute back $y_k$ to (\ref{eq:xyk}), we obtain discretization in terms of $(x_k, v_k)$:
\begin{equation}\label{eq:xvk}
\begin{aligned}
\frac{x_{k+1}  - x_k}{\alpha_k} &=  v_k - x_{k+1}  - \beta_k \nabla f(x_k), \\
\frac{v_{k+1} - v_k}{\alpha_k} &=  -  \frac{1}{\gamma_k}\left (2 - \frac{\gamma_k\beta_{k+1}}{\alpha_k} \right)\nabla f(x_{k+1}) + \left (\frac{1}{\gamma_k} - \frac{\beta_k}{\alpha_k} \right) \nabla f(x_k).
\end{aligned}
\end{equation}
Now we choose $\displaystyle \beta_k = \frac{\alpha_k}{\gamma_k}$ such that
$
\displaystyle \alpha_k \beta_k = \frac{\alpha_k^2}{\gamma_k} = \frac{1}{L}.
$
We can further simplify (\ref{eq:xvk}) to 
\begin{subequations}\label{eq:xvksimplify}
\begin{align}
\label{eq:HNAG x} \frac{x_{k+1}  - x_k}{\alpha_k} &=  v_k - x_{k+1}  - \beta_k \nabla f(x_k), \\
\label{eq:HNAG v} \frac{v_{k+1} - v_k}{\alpha_k} &=  -  \frac{1}{\tilde \gamma_k}\nabla f(x_{k+1}),
\end{align}
\end{subequations}
with $$\tilde \gamma_k = \left (2 - \frac{\gamma_k\beta_{k+1}}{\alpha_k} \right)^{-1}  \gamma_k= \left (2 - \frac{\alpha_k}{\alpha_{k+1}} \right)^{-1} \gamma_k = \frac{k+1}{k}\gamma_k.$$

It is straight forward to verity that $\tilde \gamma_{k}$ satisfies 
\begin{equation*}
    \tilde \gamma_{k+1}  = \frac{k+2}{k+1}\gamma_{k+1} = \frac{k+2}{k+1} \frac{\alpha_{k+1}^2}{\alpha_k^2} \gamma_{k}  = \frac{k+1}{k+2} \gamma_k \leq  \frac{(k+1)^2}{k(k+3)}  \gamma_k = \frac{1}{1+ \alpha_k}\tilde \gamma_k,
\end{equation*}
which is equivalent to
\begin{equation}\label{eq:gamma update}
    \frac{\tilde \gamma_{k+1} - \tilde \gamma_{k}}{\alpha_k} \leq -\tilde \gamma_{k+1}.
\end{equation}

Therefore, (\ref{eq:xvksimplify}) combining (\ref{eq:gamma update}) is a discretization of the following variant of the HNAG flow:
\begin{equation}\label{eq: ineq HNAG}
\begin{aligned}
x^{\prime}&=v-x-\beta \nabla f(x), \\
v^{\prime}&=-\frac{1}{\tilde \gamma} \nabla f(x), \\
\tilde \gamma^{\prime}&\leq- \tilde \gamma.
\end{aligned}
\end{equation}
Notice that the flow for the dynamic coefficient is $\gamma'= - \gamma$ in \cite{chen2021unified}. We shall show the convergence rate is accelerated for the scheme formulated in (\ref{eq:xvksimplify}). The proofs turned out to be similar.

\subsection{Convergence analysis}

Define the Lyapunov function 
$$\mathcal E(x, v, \tilde \gamma): = f(x) - f(x^*) + \frac{\tilde \gamma}{2}\|v - x^*\|^2.$$
We denote the right hand side of the flow (\ref{eq: ineq HNAG}) as $\mathcal G(x, v, \tilde \gamma)$. Direct computation gives
$$
\begin{aligned}
-\inprd{\nabla \mathcal{E}(x, v, \tilde \gamma) , \mathcal{G}(x, v, \tilde \gamma)}&=\left\langle\nabla f(x), x-x^*\right\rangle +\beta\|\nabla f(x)\|^2+\frac{\tilde \gamma}{2}\left\|v-x^*\right\|^2 \\
& \geqslant  \mathcal{E}(x, v, \tilde \gamma)+\beta\|\nabla f(x)\|^2.
\end{aligned}
$$

Hence $\mathcal{E}(\cdot)$ satisfies strong Lyapunov property. 

\begin{theorem}[Convergence rate for non-strongly convex minimization]
For $(x_k, v_k)$ generated by (\ref{eq:xvksimplify}) with initial values $(x_1 , v_1, \tilde \gamma_1 ) = (x_0, v_0, \tilde \gamma_0)$ and
$$
\alpha_k = \frac{2}{k+1}, \quad 
\beta_k = \frac{1}{\alpha_k L}, \quad  \tilde \gamma_k = \frac{k+1}{k}\alpha_k^2 L, \quad k \geq 1
$$
we have the convergence
\begin{equation}\label{eq: Lapynov decay}
    \mathcal E(x_{k+1}, v_{k+1},\tilde \gamma_{k+1}) \leq \prod_{j=1}^k \frac{1}{1+\alpha_j}\mathcal E_0\leq \frac{6}{(k+3)(k+2)}\mathcal E_0,
\end{equation}
where $\mathcal E_0 = \mathcal E(x_{0}, v_{0},\tilde \gamma_{0})$.
\end{theorem}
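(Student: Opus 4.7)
The plan is to establish the one-step contraction $(1+\alpha_k)\,\mathcal E_{k+1} \leq \mathcal E_k$ for all $k\geq 1$, where I write $\mathcal E_k := \mathcal E(x_k, v_k, \tilde\gamma_k)$. This is the discrete counterpart of the continuous strong Lyapunov inequality derived just before the theorem. Once it is in place, iterating from $k=1$ with the initialization $\mathcal E_1 = \mathcal E_0$ yields $\mathcal E_{k+1} \leq \prod_{j=1}^{k}(1+\alpha_j)^{-1}\mathcal E_0$; with $\alpha_j = 2/(j+1)$ one has $1+\alpha_j = (j+3)/(j+1)$, and the product telescopes to $6/[(k+2)(k+3)]$, producing the advertised $\mathcal O(1/k^2)$ rate.

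To derive the one-step decay I would expand $\mathcal E_{k+1}-\mathcal E_k$ as $[f(x_{k+1})-f(x_k)] + \tfrac{\tilde\gamma_{k+1}}{2}\|v_{k+1}-x^*\|^2 - \tfrac{\tilde\gamma_k}{2}\|v_k-x^*\|^2$ and treat the two parts separately. For the $f$-part, apply $f(x_{k+1})-f(x_k) \leq \langle \nabla f(x_{k+1}), x_{k+1}-x_k\rangle - D_f(x_k,x_{k+1})$, substitute the $x$-update (\ref{eq:HNAG x}), and split $v_k-x_{k+1} = (v_k-x^*)+(x^*-x_{k+1})$; convexity gives $\alpha_k\langle \nabla f(x_{k+1}), x^*-x_{k+1}\rangle \leq -\alpha_k D_f(x_{k+1},x^*)$, which supplies the structural decay term. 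For the $v$-part, use the identity of squares together with the monotonicity $\tilde\gamma_{k+1}-\tilde\gamma_k \leq -\alpha_k\tilde\gamma_{k+1}$ from (\ref{eq:gamma update}) and then substitute the $v$-update (\ref{eq:HNAG v}); this manufactures $-\alpha_k\langle v_{k+1}-x^*, \nabla f(x_{k+1})\rangle$ together with $-\tfrac{\alpha_k\tilde\gamma_{k+1}}{2}\|v_{k+1}-x^*\|^2$. Adding the two parts, the cross terms involving $v_k,v_{k+1}$ combine through (\ref{eq:HNAG v}) into $\tfrac{\alpha_k^2}{\tilde\gamma_k}\|\nabla f(x_{k+1})\|^2$, and the net inequality reads
\begin{equation*}
\mathcal E_{k+1}-\mathcal E_k \leq -\alpha_k\,\mathcal E_{k+1} + R_k,\qquad R_k := \tfrac{\alpha_k^2}{2\tilde\gamma_k}\|\nabla f(x_{k+1})\|^2 - \alpha_k\beta_k\langle \nabla f(x_{k+1}),\nabla f(x_k)\rangle - D_f(x_k,x_{k+1}).
\end{equation*}

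The main obstacle is verifying $R_k\leq 0$; this is where the parameter tuning enters decisively. Substituting $\alpha_k\beta_k = 1/L$ and $\alpha_k^2/(2\tilde\gamma_k) = k/[2(k+1)L]$, and then invoking the smoothness bound (\ref{eq:bound3}) in the form $D_f(x_k,x_{k+1}) \geq \tfrac{1}{2L}\|\nabla f(x_{k+1})-\nabla f(x_k)\|^2$, I expect the expansion of the square to cancel the mixed inner product $\tfrac{1}{L}\langle \nabla f(x_{k+1}),\nabla f(x_k)\rangle$ exactly, and a brief rearrangement should leave $R_k \leq -\tfrac{1}{2L(k+1)}\|\nabla f(x_{k+1})\|^2 - \tfrac{1}{2L}\|\nabla f(x_k)\|^2 \leq 0$. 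With this, the one-step contraction holds, and the telescoping product in the first paragraph delivers (\ref{eq: Lapynov decay}).
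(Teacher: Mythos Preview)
Your argument is correct and reaches the same one-step contraction $(1+\alpha_k)\mathcal E_{k+1}\le \mathcal E_k$ as the paper, with the same telescoping product afterwards. The organization differs slightly: the paper splits $\mathcal E_{k+1}-\mathcal E_k$ into three pieces $I_1,I_2,I_3$ by varying $x$, $v$, $\tilde\gamma$ one at a time, then invokes the continuous strong Lyapunov inequality at $\bfx_{k+1}$ and bounds the cross term $\alpha_k\langle\nabla f(x_{k+1}),v_k-v_{k+1}\rangle$ via Cauchy--Schwarz (which is why the condition $\tilde\gamma_k>\alpha_k^2 L$ appears); you instead split into an $f$-part and a $(v,\tilde\gamma)$-part, use convexity directly for the structural decay, and substitute the $v$-update exactly rather than applying Cauchy--Schwarz. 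Your route is marginally tighter and more self-contained (it never appeals to the flow analysis as a black box), while the paper's version makes the ``discretization preserves strong Lyapunov property'' narrative more explicit; both rely on the same two ingredients, $\alpha_k\beta_k=1/L$ and the smoothness bound $D_f(x_k,x_{k+1})\ge \tfrac{1}{2L}\|\nabla f(x_{k+1})-\nabla f(x_k)\|^2$, to close the residual.
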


\begin{proof}
For short, we denote  $\bfx_k = (x_{k}, v_{k}, \tilde\gamma_k)$ and the right hand side of (\ref{eq: ineq HNAG}) by $\mathcal G(\bfx_k):= [\mathcal G^x_k, \mathcal G^v_k, \mathcal G^\gamma_k]^{\top} $. Let us calculate the difference $ \mathcal E(\bfx_{k+1}) - \mathcal E(\bfx_{k}) = {\rm I}_1 + {\rm I}_2 + {\rm I}_3$ with
$$
\left\{\begin{aligned}
{\rm I}_1&:=\mathcal{E}\left(x_{k+1}, v_k, \tilde \gamma_k\right)-\mathcal E(\bfx_{k}), \\
{\rm I}_2&:=\mathcal{E}\left(x_{k+1}, v_{k+1}, \tilde \gamma_k\right)-\mathcal{E}\left(x_{k+1}, v_k, \tilde \gamma_k\right), \\
{\rm I}_3&:=\mathcal E(\bfx_{k+1})-\mathcal{E}\left(x_{k+1}, v_{k+1}, \tilde \gamma_k\right).
\end{aligned}\right.
$$

We shall estimate the above three terms one by one. 

As $\mathcal{E}$ is linear in terms of $\tilde \gamma$, we get
$$
{\rm I}_3=\inprd{\nabla_\gamma \mathcal{E}\left(x_{k+1}, v_{k+1}, \tilde \gamma_{k+1}\right), \gamma_{k+1}-\gamma_k}=\alpha_k\inprd{\nabla_\gamma \mathcal{E}\left(x_{k+1}, v_{k+1}, \tilde \gamma_{k+1}\right), \mathcal{G}^\gamma_{k+1}} .
$$

For the second item ${\rm I}_2$, we use the fact that $\mathcal{E}\left(x_{k+1}, \cdot, \tilde \gamma_k\right)$ is quadratic and equation (\ref{eq:HNAG v}) to get
$$
\begin{aligned}
{\rm I}_2 & =\inprd{\nabla_v \mathcal{E}\left(x_{k+1}, v_{k+1}, \tilde \gamma_k\right), v_{k+1}-v_k}-\frac{\tilde \gamma_k}{2}\left\|v_{k+1}-v_k\right\|^2 \\
& =\alpha_k\inprd{\nabla_v \mathcal{E}\left(x_{k+1}, v_{k+1}, \tilde \gamma_{k+1}\right), \mathcal{G}^v_{k+1}}-\frac{\tilde \gamma_k}{2}\left\|v_{k+1}-v_k\right\|^2,
\end{aligned}
$$
where in the last step, we switch the variable $\left(x_{k+1}, v_{k+1} \tilde \gamma_k\right)$ to $\left(x_{k+1}, v_{k+1}, \tilde \gamma_{k+1}\right)$ as the parameter $\tilde \gamma$ is canceled in the product $\inprd{\nabla_v \mathcal{E}, \mathcal{G}^v}$.

Next for ${\rm I}_1$ we use the update formula (\ref{eq:HNAG x}) and bound (\ref{eq:bound3}),
\begin{equation}\label{eq:I1}
    \begin{aligned}
    {\rm I}_1 =~& \inprd{\nabla_x \mathcal{E}\left(x_{k+1}, v_{k}, \tilde \gamma_{k+1}\right), x_{k+1}-x_k} - D_f(x_k, x_{k+1}) \\
    \leq~&  \alpha_k \inprd{\nabla_x \mathcal{E}\left(\bfx_{k+1}\right), \mathcal{G}^x\left(\bfx_{k+1}\right)}+\alpha_k \beta_k\inprd{\nabla f\left(x_{k+1}\right), \nabla f\left(x_{k+1}\right)-\nabla f\left(x_k\right)} \\
    &+\alpha_k\left\langle\nabla f\left(x_{k+1}\right), v_k-v_{k+1}\right\rangle- \frac{1}{2 L}\left\|\nabla f\left(x_{k+1}\right)-\nabla f\left(x_k\right)\right\|^2.
\end{aligned}
\end{equation}

In the first term, we can switch $\left(x_{k+1}, v_k, \tilde \gamma_k\right)$ to $\bfx_{k+1}$ because $\nabla_x \mathcal{E}=\nabla f(x)$ is independent of $(v, \gamma)$. 

We use Cauchy-Schwarz inequality to bound the terms in (\ref{eq:I1}) as follows:
$$
\alpha_k\left\|\nabla f\left(x_{k+1}\right)\right\|\left\|v_k-v_{k+1}\right\| \leqslant \frac{1}{2 L}\left\|\nabla f\left(x_{k+1}\right)\right\|^2+\frac{\alpha_k^2L}{2}\left\|v_k-v_{k+1}\right\|^2,
$$
and
$$
\begin{aligned}
& \alpha_k \beta_k\inprd{\nabla f\left(x_{k+1}\right), \nabla f\left(x_{k+1}\right)-\nabla f\left(x_k\right)} \\
= & -\frac{\alpha_k \beta_k}{2}\left\|\nabla f\left(x_k\right)\right\|^2+\frac{\alpha_k \beta_k}{2}\left\|\nabla f\left(x_{k+1}\right)\right\|^2+\frac{\alpha_k \beta_k}{2}\left\|\nabla f\left(x_{k+1}\right)-\nabla f\left(x_k\right)\right\|^2 .
\end{aligned}
$$

Adding all together and applying strong Lyapunov property at $\bfx_{k+1}$ (but with $\beta_k$ not $\beta_{k+1}$) yields that
$$
\begin{aligned}
\mathcal{E}(\bfx_{k+1})-\mathcal{E}(\bfx_k) \leqslant- & \alpha_k \mathcal{E}(\bfx_{k+1}) -\frac{1}{2} \left(\tilde \gamma_k - \alpha_k^2 L\right)\|v_k - v_{k+1}\|^2-\frac{\alpha_k \beta_k}{2}\left\|\nabla f\left(x_k\right)\right\|^2 \\
& +\frac{1}{2}\left(\frac{1}{L}-\alpha_k \beta_k\right)\left\|\nabla f\left(x_{k+1}\right)\right\|^2 \\
& +\frac{1}{2}\left(\alpha_k \beta_k-\frac{1}{L}\right)\left\|\nabla f\left(x_{k+1}\right)-\nabla f\left(x_k\right)\right\|^2
\end{aligned}
$$

By our choice of parameters:
$$
\alpha_k \beta_k-\frac{1}{L}=0, \quad \tilde \gamma_k > \gamma = \alpha_k^2 L,
$$
we get 
$$\mathcal{E}(\bfx_{k+1})-\mathcal{E}(\bfx_k) \leqslant -\alpha_k \mathcal{E}(\bfx_{k+1})$$
by throwing away negative terms. Rearrange terms and by induction we have the decay (\ref{eq: Lapynov decay}).

The convergence rate  $$ \prod_{j=1}^k \frac{1}{1+\alpha_j} = \prod_{j=1}^k \frac{j+1}{j+3} = \frac{6}{(k+3)(k+2)} = \mathcal O\left (\frac{1}{k^2}\right).$$
We conclude that for AOR-HB-0 method (\ref{eq:AOR-HB-0}), $f(x) - f(x^*)$ converges with complexity $\mathcal O(1/\sqrt{\epsilon})$.
\end{proof}
}

\section{Strongly Lyapunov Property for AOR-HB-composite Flow}\label{app: AOR-HB-composite flow}

Consider the Lyapunov function
$$
\mathcal E(x,y):= D_f(x, x^*) + \frac{\mu}{2}\|y - x^*\|^2, 
$$
where the Bregman divergence $D_f(x, x^*)$ is generalization of $f(x) - f(x^*)$ in the single convex function case. 

We rewrite the flow (\ref{eq: AOR-HB-composite flow}) as 
\begin{equation*}
\begin{aligned}
     x^{\prime} &= y - x ,\\
   y^{\prime} & = x - y - \frac{1}{\mu}(\nabla f(x)+\xi), \quad \xi \in \partial g(y).
\end{aligned}
\end{equation*}

With this formulation, we denote the right-hand side as $$\mathcal G(\bfx) = \mathcal G(x,y): =\begin{pmatrix}
y - x ,\\
x - y - \frac{1}{\mu}\nabla (f(x) + \xi)
\end{pmatrix}.$$
Using the first order optimality condition, $\mathcal G(\bfx^*) = 0$ for some $\xi^* \in \partial g(x^*)$.

We restate and prove the strong Lyapunov property in the following Lemma. The difference compared with the proof of Lemma \ref{lem: strong Lya AOR appendix} is highlighted in blue.

\begin{lemma}[Strong Lyapunov property for AOR-HB-composite flow (\ref{eq: AOR-HB-composite flow}] \label{lem: strong Lya AOR-compostite appendix}
\begin{equation*}
-\inprd {\nabla \mathcal E(x, y), \mathcal G(x, y)} \geq  \mathcal E(x, y) + \frac{\mu}{2}\|x-y\|^2, \quad \forall ~ x, y \in \mathbb{R}^d.
\end{equation*} 
\end{lemma}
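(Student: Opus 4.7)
The plan is to mimic the proof of Lemma \ref{lem: strong Lya AOR appendix} and isolate the single new ingredient, namely the subgradient term $\xi\in\partial g(y)$. First I would compute the gradient of the Lyapunov function. Since $f$ is differentiable, $\nabla_x D_f(x,x^*)=\nabla f(x)-\nabla f(x^*)$ and $\nabla_y\bigl(\frac{\mu}{2}\|y-x^*\|^2\bigr)=\mu(y-x^*)$, so
\begin{equation*}
-\inprd{\nabla \mathcal E(x,y),\mathcal G(x,y)}
= \inprd{\nabla f(x)-\nabla f(x^*),\,x-y}-\mu\inprd{y-x^*,\,x-y}+\inprd{y-x^*,\,\nabla f(x)+\xi}.
\end{equation*}

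Next, I would split the first inner product as $\inprd{\nabla f(x)-\nabla f(x^*),x-x^*}-\inprd{\nabla f(x)-\nabla f(x^*),y-x^*}$, so that the mixed term $\inprd{y-x^*,\nabla f(x)}$ from the third piece cancels against the $-\nabla f(x)$ contribution and what remains there is $\inprd{\nabla f(x^*),y-x^*}+\inprd{\xi,y-x^*}$. This is the one spot where the composite case differs from Lemma \ref{lem: strong Lya AOR appendix}. Here I invoke the KKT condition for (\ref{eq:composite}): there exists $\xi^*\in\partial g(x^*)$ with $\nabla f(x^*)+\xi^*=0$. Substituting $\nabla f(x^*)=-\xi^*$ turns the leftover into $\inprd{\xi-\xi^*,y-x^*}$, which is $\geq 0$ by monotonicity of $\partial g$ (a one-line consequence of convexity of $g$ applied at the pair $(y,x^*)$). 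This non-negative term is discarded.

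The rest is identical to the smooth case. For the quadratic cross term I use the identity of squares
\begin{equation*}
-\mu\inprd{y-x^*,x-y}=-\tfrac{\mu}{2}\|x-x^*\|^2+\tfrac{\mu}{2}\|y-x^*\|^2+\tfrac{\mu}{2}\|x-y\|^2,
\end{equation*}
and for the $f$-term I use strong convexity in its symmetrized Bregman form: from $D_f(x^*,x)+D_f(x,x^*)=\inprd{\nabla f(x)-\nabla f(x^*),x-x^*}$ together with $D_f(x^*,x)\geq \frac{\mu}{2}\|x-x^*\|^2$ (cf.\ (\ref{eq:bound1})), I obtain $\inprd{\nabla f(x)-\nabla f(x^*),x-x^*}\geq D_f(x,x^*)+\frac{\mu}{2}\|x-x^*\|^2$. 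Adding the surviving pieces, the $\pm\frac{\mu}{2}\|x-x^*\|^2$ contributions cancel and I am left with $D_f(x,x^*)+\frac{\mu}{2}\|y-x^*\|^2+\frac{\mu}{2}\|x-y\|^2=\mathcal E(x,y)+\frac{\mu}{2}\|x-y\|^2$, which is the claim.

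The only real obstacle is the bookkeeping around $\xi$: one must introduce $\xi^*\in\partial g(x^*)$ at the right moment so that the $\nabla f(x^*)$ leftover can be paired with $\xi$ into a monotone increment. Once that pairing is made, the composite case reduces cleanly to the smooth case and no new inequality beyond monotonicity of $\partial g$ is needed.
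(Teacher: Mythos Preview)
Your proposal is correct and follows essentially the same route as the paper: compute $-\inprd{\nabla\mathcal E,\mathcal G}$, use the optimality condition $\nabla f(x^*)+\xi^*=0$ to produce the monotone increment $\inprd{\xi-\xi^*,y-x^*}\geq 0$, apply the identity of squares to the $\mu$-cross term, and bound the symmetrized Bregman term via $D_f(x^*,x)\geq\frac{\mu}{2}\|x-x^*\|^2$. The only cosmetic difference is that the paper subtracts $\mathcal G(x^*,x^*)=0$ at the outset (which bakes in the KKT condition immediately), whereas you substitute $\nabla f(x^*)=-\xi^*$ a step later; the ingredients and logic are identical.
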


\begin{proof}
    By direct calculation and the $\mu$-convexity of $f$,
\begin{equation*}
\begin{aligned}
	{}&-\inprd {\nabla \mathcal E(x, y), \mathcal G(x, y)}  = -\inprd {\nabla \mathcal E(x, y), \mathcal G(x, y) -\mathcal G(x^*, x^*) } \\
={}&\langle \nabla f(x) - \nabla f(x^*), x-x^* \rangle -\mu \langle x-y,y-x^*\rangle {\blue +  \langle \xi-\xi^*,y-x^*\rangle}\\
={}&\langle \nabla f(x) - \nabla f(x^*), x-x^* \rangle -\frac{\mu}{2}
\left(\| x-x^*\|^2- \| x-y\|^2-\|y-x^*\|^2\right){\blue +  \langle \xi-\xi^*,y-x^*\rangle}\\
\geq {}& D_f(x, x^*)+\frac{\mu}{2}\| y-x^*\|^2 +
	\frac{\mu}{2}\| x-y\|^2
	={}\mathcal E(x, y) 
	+\frac{\mu}{2}\| x-y\|^2,
\end{aligned}
\end{equation*}
where the inequality used the convexity of $g$:
$$ \langle \xi-\eta, y-x\rangle \geq 0, \quad \forall ~ x, y \in \mathbb{R}^d \text{ and } \xi \in \partial g(y), \eta \in  \partial g(x).$$
\end{proof}

\begin{theorem}[Convergence rate of AOR-HB method for composite convex minimization]\label{thm:convergence rate of AOR-HB-composite}
Suppose  $f$ is $\mu$-strongly convex and $L$-smooth and $g$ is non-smooth and convex. Let $(x_k,y_k)$ be generated by Algorithm \ref{alg:AOR-HB-composite} with initial value $(x_0, y_0)$ and step size $\alpha = \sqrt{\mu/L}$. Then there exists a non-negative constant $C_0 = C_0(x_0, y_0,\mu, L)$ so that  we have the accelerated linear convergence
\begin{equation}\label{eq: linear conv composite}
\begin{aligned}
D_{f}(x, x^*) + \frac{\mu}{2}\|y_{k+1} - x^*\|^2 \leq C_0\left (\frac{1}{1+ \frac{1}{2}\sqrt{\mu/L}} \right)^k, \quad k\geq 1.
\end{aligned}
\end{equation}
\end{theorem}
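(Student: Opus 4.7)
The plan is to mirror the proof of Theorem~\ref{thm:convergence rate of AOR-HB} in Appendix~\ref{app:proof of convergence}, using the extended strong Lyapunov property for the composite flow already established in Lemma~\ref{lem: strong Lya AOR-compostite appendix}, and tracking carefully where the subgradient $\xi_{k+1}\in\partial g(y_{k+1})$ enters. Set $\bfx=(x,y)^\top$, $\bfx^*=(x^*,x^*)^\top$, use the same Lyapunov function $\mathcal E(\bfx)=D_f(x,x^*)+\tfrac{\mu}{2}\|y-x^*\|^2$ as in Appendix~\ref{app:proof of convergence}, and the modified one $\mathcal E^\alpha(\bfx)=\mathcal E(\bfx)+\alpha\langle\nabla f(x)-\nabla f(x^*),y-x^*\rangle$. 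The two-sided bound $(1-\alpha/\sqrt\rho)\mathcal E(\bfx)\le\mathcal E^\alpha(\bfx)\le 2\mathcal E(\bfx)$ from~(\ref{eq:EalphaA1}) carries over verbatim since it only uses the Bregman bounds~(\ref{eq:bound3}) on $f$.

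First, I would rewrite the implicit-in-$g$, AOR-in-$\nabla f$ discretization as a correction of the implicit Euler scheme for the composite flow:
\begin{equation*}
\bfx_{k+1}-\bfx_k=\alpha\,\mathcal G(\bfx_{k+1})-\alpha\begin{pmatrix}y_{k+1}-y_k\\ \tfrac{1}{\mu}(\nabla f(x_{k+1})-\nabla f(x_k))\end{pmatrix},
\end{equation*}
where $\mathcal G(\bfx)=(y-x,\;x-y-\tfrac{1}{\mu}(\nabla f(x)+\xi))^\top$ with a selection $\xi\in\partial g(y)$; the implicit treatment of $g$ means that only $\nabla f$ picks up the AOR correction, so the correction vector is identical to that of the smooth case. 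Next I plug this into the Bregman identity
\begin{equation*}
\mathcal E(\bfx_{k+1})-\mathcal E(\bfx_k)=\langle\nabla\mathcal E(\bfx_{k+1}),\bfx_{k+1}-\bfx_k\rangle-D_{\mathcal E}(\bfx_k,\bfx_{k+1}),
\end{equation*}
with $D_{\mathcal E}(\bfx_k,\bfx_{k+1})=D_f(x_k,x_{k+1})+\tfrac{\mu}{2}\|y_k-y_{k+1}\|^2$, exactly as in~(\ref{eq:EalphaAbridgeappendix}).

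Second, I expand the correction inner product and split the cross term using the same algebraic manipulation used to pass from~(\ref{eq:xcross}) to~(\ref{eq:EalphaAbridgeappendix}), thereby absorbing the telescoping part into $\mathcal E^\alpha$. This gives
\begin{equation*}
\mathcal E^\alpha(\bfx_{k+1})-\mathcal E^\alpha(\bfx_k)=\alpha\langle\nabla\mathcal E(\bfx_{k+1}),\mathcal G(\bfx_{k+1})\rangle-D_{\mathcal E}(\bfx_k,\bfx_{k+1})-\alpha\langle y_{k+1}-y_k,\nabla f(x_{k+1})-\nabla f(x_k)\rangle.
\end{equation*}
The last two terms are controlled by Lemma~\ref{lem:cross Breg} exactly as before for $0<\alpha\le\sqrt\rho$, so they are dropped. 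For the first term I invoke Lemma~\ref{lem: strong Lya AOR-compostite appendix}, which yields $-\langle\nabla\mathcal E(\bfx_{k+1}),\mathcal G(\bfx_{k+1})\rangle\ge\mathcal E(\bfx_{k+1})$; the subgradient contribution $\langle\xi_{k+1}-\xi^*,y_{k+1}-x^*\rangle\ge 0$ has the right sign thanks to convexity of $g$, which is precisely the place where the composite extension is nontrivial.

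Finally, combining these with $\mathcal E^\alpha(\bfx_{k+1})\le 2\mathcal E(\bfx_{k+1})$ gives
\begin{equation*}
\mathcal E^\alpha(\bfx_{k+1})-\mathcal E^\alpha(\bfx_k)\le-\alpha\mathcal E(\bfx_{k+1})\le-\tfrac{\alpha}{2}\mathcal E^\alpha(\bfx_{k+1}),
\end{equation*}
from which the geometric decay $\mathcal E^\alpha(\bfx_{k+1})\le(1+\alpha/2)^{-(k+1)}\mathcal E^\alpha(\bfx_0)$ follows by induction, and then
$\mathcal E(\bfx_{k+1})\le\tfrac{1}{\alpha}(\mathcal E^\alpha(\bfx_k)-\mathcal E^\alpha(\bfx_{k+1}))$ yields~(\ref{eq: linear conv composite}) with $C_0=\tfrac{1}{\alpha}\mathcal E^\alpha(\bfx_0)$ at step size $\alpha=\sqrt{\mu/L}$. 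The main obstacle, as anticipated by the authors' remark that ``no difference arises in the error equation,'' is verifying cleanly that the implicit treatment of $\partial g$ leaves the error correction vector unchanged and that the extra inner product $\alpha\langle\xi_{k+1}-\xi^*,y_{k+1}-x^*\rangle$ produced by the strong Lyapunov step is nonnegative; once those are in place, the proof is a line-by-line translation of Appendix~\ref{app:proof of convergence}.
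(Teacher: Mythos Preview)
Your proposal is correct and follows exactly the approach the paper intends: the paper's proof of this theorem is literally a one-line reference back to Appendix~\ref{app:proof of convergence}, justified by the earlier remark that the implicit discretization of $\partial g$ leaves the correction vector in~(\ref{eq:correctionIEappendix}) unchanged. You have faithfully reconstructed that argument and correctly identified the only new ingredient, namely replacing Lemma~\ref{lem: strong Lya AOR appendix} by Lemma~\ref{lem: strong Lya AOR-compostite appendix} so that the extra term $\langle \xi_{k+1}-\xi^*,y_{k+1}-x^*\rangle\ge 0$ is absorbed.
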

\begin{proof}
See Appendix \ref{app:proof of convergence}.
\end{proof}

\section{Equivalent formulation of AOR-HB-saddle}\label{app: eq AOR-HB-saddle}
 Algorithm 2 is equivalent to the following discretization of the HB-saddle flow  (\ref{eq:AG saddle appendix}): 
\begin{subequations}\label{eq:explicit saddle}
\begin{align}
 \frac{u_{k+1}-u_k}{\alpha} &= v_k - u_{k+1}, \\
  \frac{p_{k+1}-p_k}{\alpha} &= q_k - p_{k+1}, \\
 \label{eq:explicitsaddle1}       \frac{v_{k+1}-v_k}{\alpha} &=u_{k+1} - v_{k+1} -\frac{1}{\mu_f} \left( 2 \nabla f(u_{k+1})  - \nabla f(u_k)+ B^{\top}q_{k}\right) , \\
\label{eq:explicitsaddle2}        \frac{q_{k+1}-q_k}{\alpha} &=  p_{k+1} - q_{k+1}-\frac{1}{\mu_g}\left( 2 \nabla g(p_{k+1}) -\nabla g(p_k)-B (2v_{k+1} - v_k) \right) .
\end{align}
\end{subequations}
 The discretization is a mixture of implicit Euler and explicit Euler with time step size $\alpha$. For both the gradient terms and $Bv$ term, we use the AOR technique, i.e., $\nabla f(u) \approx 2 \nabla f(u_{k+1})  - \nabla f(u_k)$,  $\nabla g(p) \approx 2 \nabla g(p_{k+1})  - \nabla g(p_k)$ and $Bv \approx B (2v_{k+1} - v_k)$. Algorithm \ref{alg:AOR-HB saddle} is  implementation-friendly while (\ref{eq:explicit saddle}) is convenient for deriving convergence analysis.

\section{Proofs of Section \ref{sec:saddle}}

\subsection{A class of monotone operator equations}
In fact, we can extend HB flow to a broad class of monotone operator equation $\mathcal A(x) = 0$ with 
\begin{equation}\label{eq:dec}
\mathcal A(x) = \nabla F(x) + \mathcal N x,
\end{equation}
where $F$ is a strongly convex function and $L_F$ smooth, and $\mathcal N$ is a linear and skew-symmetric operator, i.e., $\mathcal N^{\top} = -\mathcal N$. Then $\mathcal A$ is Lipschitz continuous with constant $L_{\mathcal A}\leq L_F + \| \mathcal N \|$. Therefore $\mathcal A$ is monotone and Lipschitz continuous which is also known as inverse-strongly monotonicity~\citep{browder1967construction,liu1998regularization} or co-coercitivity~\citep{zhu1996co}. 
Consequently equation $\mathcal A(x) = 0$ has a unique solution $x^{*}$~\citep{rockafellar1976monotone}. 

As a special case, we recover strongly-convex-strongly-concave saddle point problems with bilinear coupling when $x:= (u, p)^{\top}$, $F(x):= f(u)+ g(p)
$ and $
\mathcal N := \begin{pmatrix}
0 & \;  B^{\top} \\
-B & \;  0
\end{pmatrix}.
$

We introduce an accelerated gradient flow 
\begin{equation}\label{eq:AG}
\left \{\begin{aligned}
     x^{\prime} &= y - x ,\\
     y^{\prime} & = x - y - \mu^{-1}(\nabla F(x) + \mathcal N y).
\end{aligned}\right .
\end{equation}
Comparing with the accelerated gradient flow (\ref{eq:AGD flow}) for convex optimization, the difference is the gradient and skew-symmetric splitting $\mathcal A(x) \rightarrow \nabla F(x) + \mathcal N y$. 

Denote the vector field on the right hand side of~(\ref{eq:AG}) by $\mathcal G(x,y)$. Then $\mathcal G(x^{*},x^{*}) = 0$ and thus $(x^{*},x^{*})$ is an equilibrium point of~(\ref{eq:AG}). 
 
We first show $(x^{*},x^{*})$ is exponentially stable. Consider the Lyapunov function:
\begin{equation}\label{eq: acc Lyapunov, mixed}
\mathcal{E}(x, y) =  D_F(x, x^{*}) + \frac{\mu}{2}\| y-x^{*}\|^2.
\end{equation}
For $\mu$-strongly convex $F$, function $D_F(\cdot , x^{*}) \in \mathcal S_{\mu}$. Then $\mathcal{E}(x, y)\geq 0$ and $\mathcal{E}(x, y)= 0$ iff $x=y=x^{*}$.

We then verify the strong Lyapunov property. The proof is similar to that of Lemma~\ref{lem: strong Lya AOR appendix}.

\begin{theorem}[Strong Lyapunov Property]\label{thm: acc strong Lyapunov}
Assume function $F$ is $\mu$-strongly convex. Then for the Lyapunov function~(\ref{eq: acc Lyapunov, mixed}) and the  accelerated gradient flow vector field $\mathcal G$ asoociated to~(\ref{eq:AG}), the following strong Lyapunov property holds
\begin{equation}\label{eq:strong acc}
- \nabla \mathcal E(x,y)\cdot \mathcal G(x,y) \geq \mathcal E(x,y)+\frac{\mu}{2}\|y-x\|^2, \quad \forall ~ x, y \in \mathcal V.
\end{equation}
\end{theorem}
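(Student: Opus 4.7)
The plan is to directly compute $-\langle \nabla \mathcal E(x,y), \mathcal G(x,y)\rangle$, use the optimality condition $\mathcal A(x^*)=\nabla F(x^*) + \mathcal N x^* = 0$ together with skew-symmetry of $\mathcal N$ to cancel the coupling, and finish with a polarization identity and the strong convexity bound on $D_F$. This mirrors the argument for Lemma~\ref{lem: strong Lya AOR appendix}, with $f$ replaced by $F$ and with the skew-symmetric piece $\mathcal N$ playing the role that bilinear coupling played in the saddle-point case.

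First I would compute the gradient components of $\mathcal E$: $\partial_x \mathcal E(x,y) = \nabla F(x) - \nabla F(x^*)$ and $\partial_y \mathcal E(x,y) = \mu(y-x^*)$, so that
\begin{equation*}
\langle \nabla \mathcal E, \mathcal G\rangle
= \langle \nabla F(x)-\nabla F(x^*),\, y-x\rangle
+ \mu \langle y-x^*,\, x-y\rangle
- \langle y-x^*,\, \nabla F(x) + \mathcal N y\rangle.
\end{equation*}
Next I would rewrite the last term using $\nabla F(x^*) = -\mathcal N x^*$:
\begin{equation*}
\nabla F(x) + \mathcal N y = \bigl(\nabla F(x) - \nabla F(x^*)\bigr) + \mathcal N(y - x^*).
\end{equation*}
The skew-symmetry $\mathcal N^{\top} = -\mathcal N$ yields $\langle y-x^*, \mathcal N(y-x^*)\rangle = 0$, which is the key cancellation. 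Collecting what remains, the $\nabla F$ contributions combine to
\begin{equation*}
\langle \nabla F(x)-\nabla F(x^*),\, y-x - (y-x^*)\rangle = -\langle \nabla F(x)-\nabla F(x^*),\, x-x^*\rangle,
\end{equation*}
so
\begin{equation*}
-\langle \nabla \mathcal E, \mathcal G\rangle
= \langle \nabla F(x)-\nabla F(x^*),\, x-x^*\rangle + \mu \langle y-x^*,\, y-x\rangle.
\end{equation*}

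To finish, I would expand the second term via the polarization identity
\begin{equation*}
2\langle y-x^*,\, y-x\rangle = \|y-x^*\|^2 + \|y-x\|^2 - \|x-x^*\|^2,
\end{equation*}
and bound the first term using strong convexity of $F$ in its symmetrized form $\langle \nabla F(x)-\nabla F(x^*), x-x^*\rangle = D_F(x,x^*) + D_F(x^*,x) \geq D_F(x,x^*) + \tfrac{\mu}{2}\|x-x^*\|^2$ (Lemma~\ref{lem: Breg bound}). The $\pm \tfrac{\mu}{2}\|x-x^*\|^2$ contributions cancel, leaving precisely $\mathcal E(x,y) + \tfrac{\mu}{2}\|y-x\|^2$, which is (\ref{eq:strong acc}).

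The only nontrivial step is recognizing how to exploit skew-symmetry: one must shift $\mathcal N y$ by the equilibrium value $\mathcal N x^* = -\nabla F(x^*)$ so that the remaining $\mathcal N(y-x^*)$ pairs with $y-x^*$ and vanishes. Without this regrouping the $\mathcal N$ term does not obviously drop out, and one might be tempted to look for a separate monotonicity estimate on $\mathcal N$ (which would be vacuous since $\mathcal N$ is only monotone, not strongly monotone). Once this cancellation is seen, the rest is a direct replay of the single-variable strong Lyapunov computation, and the scheme used throughout the paper confirms that the accelerated flow (\ref{eq:AG}) inherits exponential stability from the convex/min-max cases in a unified way.
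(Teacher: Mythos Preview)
Your proof is correct and essentially identical to the paper's: both compute $-\langle\nabla\mathcal E,\mathcal G\rangle$ directly, use the equilibrium condition (the paper phrases it as $\mathcal G(x^*,x^*)=0$, you as $\nabla F(x^*)=-\mathcal N x^*$) to reduce $\mathcal N y$ to $\mathcal N(y-x^*)$ and kill it by skew-symmetry, then finish with the same polarization identity and the symmetrized Bregman bound $\langle\nabla F(x)-\nabla F(x^*),x-x^*\rangle\geq D_F(x,x^*)+\tfrac{\mu}{2}\|x-x^*\|^2$.
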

\begin{proof}
First of all, as $\mathcal G(x^{*}, x^{*}) = 0$, $$- \inprd {\nabla \mathcal E(x,y),\mathcal G(x,y)} = - \inprd {\nabla \mathcal E(x,y),\mathcal G(x,y)- \mathcal G(x^{*}, x^{*})} .$$
Direct computation gives
\begin{align*}
- \inprd {\nabla \mathcal E(x,y),\mathcal G(x,y)}  =&~
\langle  \nabla D_F(x, x^{*}),x -x^{*}- (y - x^{*}) \rangle  -\mu \inprd{  y-x^{*}, x -x^{*}}\\
 + \mu \|y  - x^{*}\|^2 & + \langle \nabla F(x) - \nabla F(x^{*}),y - x^{*} \rangle + \inprd{ y-x^{*}, \mathcal N (y-x^{*})}\\
=~ \langle  \nabla F(x) &- \nabla F(x^{*}),x -x^{*} \rangle  + \mu \|y  - x^{*}\|^2-\mu \inprd{y-x^{*},x -x^{*}},
\end{align*}
where we have used $ \nabla D_F(x, x^{*}) = \nabla F(x) - \nabla F(x^{*})$ and $\inprd{ y-x^{*}, \mathcal N (y-x^{*})} = 0$ since $\mathcal N$ is skew-symmetric.
We expand the last two term using the identity of squares: 
\begin{equation*}
    \begin{aligned}
    &\frac{1}{2} \|y  - x^{*}\|^2 - \inprd{ y-x^{*}, x -x^{*}}  = \frac{1}{2}\|y-x\|^2-\frac{1}{2}\|x - x^{*}\|^2.
    \end{aligned}
\end{equation*}
Using the bound (\ref{eq:bound1}), we get
$$
 \langle \nabla F(x) - \nabla F(x^{*}), x- x^{*} \rangle = D_F(x, x^{*}) + D_F(x^{*}, x)  \geq  D_F(x, x^{*}) + \frac{\mu}{2}\|x - x^{*}\|^2
$$
and obtain~(\ref{eq:strong acc}).
\end{proof}

The calculation is more clear when $\nabla F(x) = Ax$ is linear with $A = \nabla^2 F \geq \mu I$. We denote by $\bfx = (x,y)^{\top}$ and $\mathcal E(\bfx) = \frac{1}{2}\|\bfx - \bfx^*\|_{\mathcal D}^2$ with $\mathcal D=\begin{pmatrix}
A & 0 \\
0  & \mu I
\end{pmatrix}$. Then $- \inprd {\nabla \mathcal E(x,y),\mathcal G(x,y)- \mathcal G(x^{*}, x^{*})}$ is a quadratic form $(\bfx-\bfx^*)^{\top}\mathcal M(\bfx-\bfx^*)$. We calculate the matrix $\mathcal M$ as
$$
\begin{pmatrix}
A & 0 \\
0  & \mu I
\end{pmatrix}
\begin{pmatrix}
  I & - I \\
 - I + \mu^{-1} A &   \quad I+ \mu^{-1} \mathcal N
\end{pmatrix}
=
\begin{pmatrix}
A & - A \\
 - \mu I + A & \;  \mu I + \mathcal N
\end{pmatrix}.
$$
As $v^{\top}Mv = v^{\top}\sym(M)v$ where $\sym(\cdot)$ is the symmetric part of a matrix, direct computation gives
\begin{align*}
\sym
\begin{pmatrix}
A & -A\\
 -\mu I +A & \;   \mu I + \mathcal N
\end{pmatrix}
&=
\begin{pmatrix}
A & - \mu I/2 \\
 - \mu I/2 & \mu I
\end{pmatrix}\\
&\geq
\begin{pmatrix}
A/2 & 0 \\
0 & \;  \mu I/2
\end{pmatrix} + \frac{1}{2}
\begin{pmatrix}
\mu I & - \mu I \\
 -\mu I & \;  \mu I
\end{pmatrix},
\end{align*}
where in the last step we use the convexity $A\geq \mu I$. Then~(\ref{eq:strong acc}) follows.

\subsection{Exponential stability of HB-saddle flow}\label{app: exp HB-saddle}

Return to the saddle point problems, we let $\bfx = (x,y)^{\top}$ with $x = (u,p)^{\top}, y =  (v,q)^{\top} \in \mathbb{R}^m \times  \mathbb{R}^n $ and denote by
$$
F(x) := f(u) + g(p), 
\
\mathcal N := \begin{pmatrix}
0 & \;  B^{\top} \\
-B & \;  0
\end{pmatrix}, 
\
\mathcal D_\mu := 
\begin{pmatrix}
 \mu_f & 0\\
0 & \mu_g 
\end{pmatrix}.
$$

Consider the Lyapunov function
\begin{equation}\label{eq:Esaddle}
\begin{aligned}
\mathcal{E}(\bfx) =\mathcal{E}(u, v, p, q) & := D_F(x, x^*)  + \frac{1}{2}\| y-x^*\|_{\mathcal D_\mu}^2 \\
&= D_f(u,u^*) + D_g(p,p^*) + \frac{\mu_f}{2}\| v-u^*\|^2 + \frac{\mu_g}{2}\| q-p^*\|^2,
\end{aligned}
\end{equation}

As $f, g$ are strongly convex, $\mathcal{E}(\bfx)\geq 0$ and $\mathcal{E}(\bfx)= 0$ iff $u=v=u^*$ and $p = q = p^*$. 
Recall that the HB-saddle flow is
\begin{equation}\label{eq:AG saddle appendix}
\begin{aligned}
     u^{\prime} &= v - u , \\
     p^{\prime} &= q - p, \\
     v^{\prime} & = u - v - \frac{1}{\mu_f}(\nabla f(u) + B^{\top}q), \\
     q^{\prime} & =p - q -\frac{1}{\mu_g}(\nabla g(p) - Bv).
\end{aligned}
\end{equation}
Denoted the vector field on the right hand side of~(\ref{eq:AG saddle appendix}) by $\mathcal G(u,v, p, q)$, as a special case of Theorem \ref{thm: acc strong Lyapunov}, we obtain the following strong Lyapunov property. 

\begin{theorem}[Strong Lyapunov property for HB-saddle flow]\label{thm: exp conv HB-saddle}
   Suppose $f $ is $\mu_f$-strongly convex $g$ is $\mu_g$-strongly convex. The following strong Lyapunov property holds for all $u,v \in \mathbb R^m, p,q \in \mathbb R^n$. 
    \begin{equation}\label{eq:strong saddle}
\begin{aligned}
- \inprd{ \nabla \mathcal E(u,v, p, q), \mathcal G(u,v, p, q) }\geq \mathcal E(u,v, p, q)+\frac{\mu_f}{2}\|v-u\|^2 + \frac{\mu_g}{2}\|q-p\|^2.
\end{aligned}
\end{equation}
 Consequently, for solutions $(u(t), v(t), p(t), q(t))$ of the HB-saddle flow (\ref{eq:AG saddle}), we have the exponential stability:
 $$D_f(u,u^*)+  D_g(p,p^*)  +\frac{\mu_f}{2}\| v - u^*\|^2 +\frac{\mu_g}{2}\| q - p^*\|^2  \leq e^{-t}\mathcal{E}(u(0), v(0), p(0), q(0)), ~ \quad t >0. $$
\end{theorem}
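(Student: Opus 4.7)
The plan is to verify the strong Lyapunov inequality (\ref{eq:strong saddle}) by a direct computation that parallels the proof of Theorem \ref{thm: acc strong Lyapunov}, then deduce exponential stability from the Lyapunov lemma of Section 2. The only structural difference from the single-$\mu$ case is the block-diagonal scaling, so the same strategy of (i) identity of squares and (ii) symmetrized Bregman divergence should go through componentwise on the two blocks.

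First I would unpack the four components of $\nabla \mathcal E$, namely $\nabla_u \mathcal E = \nabla f(u) - \nabla f(u^*)$, $\nabla_p \mathcal E = \nabla g(p) - \nabla g(p^*)$, $\nabla_v \mathcal E = \mu_f(v - u^*)$, and $\nabla_q \mathcal E = \mu_g(q - p^*)$, pair them against the corresponding components of $\mathcal G$ from (\ref{eq:AG saddle appendix}), and use $\mathcal G(u^*, u^*, p^*, p^*) = 0$ (which holds by (\ref{eq:KKT_saddle})) to eliminate the reference gradients $\nabla f(u^*)$ and $\nabla g(p^*)$ in favor of $-B^\top p^*$ and $Bu^*$ respectively. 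After this rewriting, the bilinear coupling contributes only
\begin{equation*}
\inprd{v - u^*, B^\top(q - p^*)} - \inprd{q - p^*, B(v - u^*)} = 0
\end{equation*}
by definition of the transpose. This is the saddle-point analogue of the step $\inprd{y - x^*, \mathcal N(y - x^*)} = 0$ in the proof of Theorem \ref{thm: acc strong Lyapunov} and is precisely where skew-symmetry of the coupling pays off.

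Next I would treat the $(u, v, u^*)$ and $(p, q, p^*)$ triples separately via the identity of squares
\begin{equation*}
\mu_f \|v - u^*\|^2 - \mu_f \inprd{v - u^*, u - u^*} = \frac{\mu_f}{2}\|v - u^*\|^2 + \frac{\mu_f}{2}\|v - u\|^2 - \frac{\mu_f}{2}\|u - u^*\|^2,
\end{equation*}
together with its $\mu_g$-counterpart, and invoke the lower bound
\begin{equation*}
\inprd{\nabla f(u) - \nabla f(u^*), u - u^*} = D_f(u, u^*) + D_f(u^*, u) \geq D_f(u, u^*) + \frac{\mu_f}{2}\|u - u^*\|^2
\end{equation*}
from Lemma \ref{lem: Breg bound} (and its $g$-analogue). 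Summing the six contributions, the $\frac{\mu_f}{2}\|u - u^*\|^2$ and $\frac{\mu_g}{2}\|p - p^*\|^2$ terms cancel cleanly, and what remains is exactly $\mathcal E(u, v, p, q) + \frac{\mu_f}{2}\|v - u\|^2 + \frac{\mu_g}{2}\|q - p\|^2$, which is (\ref{eq:strong saddle}).

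Exponential stability then follows immediately: discarding the nonnegative $\frac{\mu_f}{2}\|v - u\|^2 + \frac{\mu_g}{2}\|q - p\|^2$ gives $-\inprd{\nabla \mathcal E, \mathcal G} \geq \mathcal E$ along trajectories, so the Lyapunov lemma with $c = 1$ yields the claimed bound $\mathcal E(u(t), v(t), p(t), q(t)) \leq e^{-t}\mathcal E(u(0), v(0), p(0), q(0))$. The main place that requires care is the bookkeeping of the residual reference-gradient pieces $\inprd{v - u^*, B^\top p^*}$ and $\inprd{q - p^*, Bu^*}$ that arise when shifting to the saddle point; they must be shown to cancel exactly against the $B^\top q$ and $Bv$ contributions from the $v$- and $q$-equations, which is automatic once the KKT conditions (\ref{eq:KKT_saddle}) are invoked. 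This cancellation is conceptually the reason the accelerated framework for convex optimization extends verbatim to the bilinearly-coupled saddle-point setting.
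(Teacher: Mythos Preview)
Your proposal is correct and follows essentially the same argument as the paper. The paper treats this theorem as a special case of Theorem~\ref{thm: acc strong Lyapunov} (the general monotone-operator version with $F(x)=f(u)+g(p)$ and skew-symmetric $\mathcal N$), and your componentwise computation is exactly that proof unpacked in the two blocks $(u,v)$ and $(p,q)$; in fact your explicit treatment of the block-diagonal scaling $\mathcal D_\mu=\mathrm{diag}(\mu_f I,\mu_g I)$ is slightly more careful than the paper's one-line deferral, since Theorem~\ref{thm: acc strong Lyapunov} is stated with a single scalar $\mu$.
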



\subsection{Proof of Theorem \ref{thm:convergence rate of AOR-HB-saddle}}\label{app: proof of lin conv AOR-HB-saddle}
Consider the modified Lyapunov function
\begin{equation}
\begin{aligned}
\mathcal{E}^{\alpha}(\bfx) &:= \mathcal{E}(\bfx) + \alpha \inprd{\nabla F(x) - \nabla F(x^*), y - x^*} - \alpha\| y - y^*\|^2_{\mathcal B^{\sym}},
\end{aligned}
\end{equation}
where $\mathcal{E}(\bfx)$ is defined as (\ref{eq:Esaddle}) and $\mathcal  B^{\sym}  =\begin{pmatrix}
    0 & B^{\top}\\
    B & 0
\end{pmatrix}$. 
Due to the bilinear coupling, additional cross term $\alpha \| y - y^*\|^2_{\mathcal B^{\sym}} = 2\alpha ( B(v-v^*), q - q^*)$ is included in $\mathcal{E}^{\alpha}$. 

We shall split $\frac{1}{2}\| y-x^*\|_{\mathcal D_\mu}^2$ to $\frac{\beta}{2}\| y-x^*\|_{\mathcal D_\mu}^2 + \frac{1-\beta}{2}\| y-x^*\|_{\mathcal D_\mu}^2$ to bound the two cross terms. The cross term $\alpha \inprd{\nabla F(x) - \nabla F(x^*), y - x^*}$ can be bounded using the vector form of Lemma \ref{lem:cross Breg}. Next we focus on the $\alpha\| y - y^*\|^2_{\mathcal B^{\sym}}$. 

\begin{lemma}\label{lm:eignDBsyn}
 Denoted by $ \mathcal D_{\mu} = 
\begin{pmatrix}
\mu_f I & \; 0\\
0 &\; \mu_g I
\end{pmatrix}$ with $\mu_f, \mu_g > 0$ and $\mathcal  B^{\sym}  =\begin{pmatrix}
    0 & B^{\top}\\
    B & 0
\end{pmatrix}$. For any $\alpha, c>0$, we have 
\begin{equation}\label{eq: bound cross mat}
  \left (1- \frac{\alpha \|B\|}{c\sqrt{\mu_f\mu_g}} \right)c\mathcal  D_{\mu}\leq    c\mathcal  D_{\mu} \pm \alpha \mathcal  B^{\sym} \leq \left  (1+\frac{\alpha \|B\|}{c\sqrt{\mu_f\mu_g}} \right)c\mathcal  D_{\mu}. 
\end{equation}
\end{lemma}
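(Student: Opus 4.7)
The plan is to reduce both inequalities to a single spectral bound on $\mathcal D_\mu^{-1}\mathcal B^{\sym}$ and then compute the spectrum explicitly. After dividing through by $c>0$, the two-sided inequality (\ref{eq: bound cross mat}) is equivalent to
\begin{equation*}
\pm \,\mathcal B^{\sym} \;\preceq\; \frac{\|B\|}{\sqrt{\mu_f\mu_g}}\,\mathcal D_\mu,
\end{equation*}
so the constant $c$ plays no role and can be dropped. Using the equivalence recalled in the main text (namely $A\preceq D$ iff $\lambda_{\max}(D^{-1}A)\le 1$ when $D$ is SPD), this reduces to showing
\begin{equation*}
\bigl|\lambda\bigl(\mathcal D_\mu^{-1}\mathcal B^{\sym}\bigr)\bigr|\;\le\;\frac{\|B\|}{\sqrt{\mu_f\mu_g}}.
\end{equation*}

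The key step is to symmetrize. Since $\mathcal D_\mu$ is SPD, the matrix $\mathcal D_\mu^{-1}\mathcal B^{\sym}$ is similar (via conjugation by $\mathcal D_\mu^{1/2}$) to the symmetric matrix
\begin{equation*}
\widetilde{\mathcal B} \;:=\; \mathcal D_\mu^{-1/2}\mathcal B^{\sym}\mathcal D_\mu^{-1/2}
\;=\;
\begin{pmatrix}
0 & (\mu_f\mu_g)^{-1/2} B^{\top}\\
(\mu_f\mu_g)^{-1/2} B & 0
\end{pmatrix},
\end{equation*}
so the two matrices have identical (real) spectrum. It is a standard fact that a block matrix of the form $\begin{pmatrix} 0 & M^{\top}\\ M & 0\end{pmatrix}$ has eigenvalues $\pm \sigma_i(M)$, where $\{\sigma_i(M)\}$ are the singular values of $M$; this can be seen by noting that any eigenpair $(\lambda,(u,v)^{\top})$ satisfies $M^{\top}v=\lambda u$ and $Mu=\lambda v$, hence $M^{\top}M u=\lambda^2 u$. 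Applying this with $M=(\mu_f\mu_g)^{-1/2} B$ gives eigenvalues $\pm\sigma_i(B)/\sqrt{\mu_f\mu_g}$, whose maximal absolute value is $\|B\|/\sqrt{\mu_f\mu_g}$. This delivers the claimed spectral bound and hence (\ref{eq: bound cross mat}).

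There is no real obstacle here — the only mild point to be careful about is that $\mathcal D_\mu^{-1}\mathcal B^{\sym}$ itself is not symmetric, so one cannot invoke the spectral equivalence blindly; the symmetrization via $\mathcal D_\mu^{\pm 1/2}$ is what legitimizes using the eigenvalue characterization of $\preceq$. Once this is in place, the computation of the spectrum is a direct singular-value identity. The factor of $c$ appears only as a harmless rescaling of both sides, so the bound is uniform in $c>0$.
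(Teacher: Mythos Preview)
Your proposal is correct and follows essentially the same approach as the paper: both reduce the two-sided matrix inequality to the spectral bound $|\lambda(\mathcal D_\mu^{-1}\mathcal B^{\sym})|\le \|B\|/\sqrt{\mu_f\mu_g}$ and then compute that spectrum explicitly, obtaining eigenvalues $\pm\sigma_i(B)/\sqrt{\mu_f\mu_g}$. The only cosmetic difference is that the paper computes the eigenvalues by passing to the SVD basis of $B$ and reading off $2\times 2$ blocks, whereas you symmetrize via $\mathcal D_\mu^{\pm 1/2}$ and invoke the standard spectrum of $\begin{pmatrix}0 & M^{\top}\\ M & 0\end{pmatrix}$; your route has the mild advantage of explicitly justifying why the eigenvalue characterization of $\preceq$ applies despite $\mathcal D_\mu^{-1}\mathcal B^{\sym}$ being non-symmetric.
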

\begin{proof}
We first calculate the eigenvalues of $\mathcal D_{\mu}^{-1}\mathcal B^{\sym}$. By choosing the SVD basis of $B = U\Sigma V$, its eigenvalue is given by the $2\times 2$ matrix $
\begin{pmatrix}
 0& \mu_{f}^{-1}\sigma(B)\\
\mu_{g}^{-1}\sigma(B) &0
\end{pmatrix}
$, where $\sigma(B)$ is a singular value of $B$. So $\lambda(\mathcal D_{\mu}^{-1}\mathcal B^{\sym}) = \pm \frac{\sigma(B)}{\sqrt{\mu_f\mu_g}}$ and consequently 
\begin{equation}\label{eq:eigbound}
-\frac{\|B\|}{\sqrt{\mu_f\mu_g}} \leq \lambda_{\min}(\mathcal D_{\mu}^{-1}\mathcal B^{\sym})  \leq \lambda_{\max}(\mathcal D_{\mu}^{-1}\mathcal B^{\sym}) \leq  \frac{\|B\|}{\sqrt{\mu_f\mu_g}}.
\end{equation}
 Then write $c\mathcal  D_{\mu} \pm \alpha \mathcal  B^{\sym} = c\mathcal  D_{\mu} ( I \pm \frac{\alpha}{c} \mathcal D_{\mu}^{-1}\mathcal  B^{\sym})$ and apply the bound (\ref{eq:eigbound}) to get the desired result.
\end{proof}

{ \begin{lemma}\label{lem: bound Lya cross}  Suppose $f : \mathbb{R}^m \rightarrow \mathbb{R} $ is $\mu_f$-strongly convex and $L_f$-smooth, $g: \mathbb{R}^n \rightarrow \mathbb{R} $ is $\mu_g$-strongly convex and $L_g$-smooth. Let $F(\bfx) = f(u)+ g(p)$. For any $\beta \in (0,1)$, denoted by $$\sqrt{\rho} =  \min \left \{\sqrt{\beta} \min \left \{\sqrt{\frac{\mu_f}{L_f}},  \sqrt{\frac{\mu_g}{L_g}} \right\} ,(1-\beta)  \frac{\sqrt{\mu_f\mu_g}}{\|B\|}\right\}.$$
Then for any $\alpha>0$ and $\mathcal E$ defined by (\ref{eq:Esaddle}) and any two vectors $\bfx_{k}, \bfx_{k+1}\in \mathbb R^{2(m+n)}$
\begin{equation}\label{eq:DEsaddle}
\begin{aligned}
&\left (1- \frac{\alpha}{\sqrt{\rho}}\right)D_{\mathcal E} (\bfx_{k}, \bfx_{k+1}) \\
\leq {}&D_{\mathcal E} (\bfx_{k}, \bfx_{k+1}) \pm \alpha \inprd{ y_{k+1} - y_k ,\nabla F( x_{k+1}) -\nabla F(x_k) } \pm \alpha\| y_k - y_{k+1}\|^2_{\mathcal B^{\sym}},\\
\leq {} &\left (1+ \frac{\alpha}{\sqrt{\rho}}\right)D_{\mathcal E} (\bfx_{k}, \bfx_{k+1}).
\end{aligned}
\end{equation}
In particular, for $ \alpha \leq \sqrt{\rho}$, we have the bound
\begin{equation}\label{eq:ealprhaa1}
 0 \leq \mathcal{E}^{\alpha}(\bfx)  \leq  2 \mathcal{E}(\bfx) \quad \bfx\in \mathbb R^{2d}.
 \end{equation}
\end{lemma}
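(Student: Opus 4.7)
The plan is to mirror the single-objective argument of Lemma \ref{lem:cross Breg}, but now with two cross terms to control simultaneously: the gradient cross term $\alpha\inprd{y_{k+1}-y_k,\nabla F(x_{k+1})-\nabla F(x_k)}$ and the bilinear/skew cross term $\alpha\|y_k-y_{k+1}\|^2_{\mathcal B^{\sym}}$. Since $F(x)=f(u)+g(p)$ and $\mathcal D_\mu$ is block-diagonal, the Bregman divergence decomposes cleanly as
\begin{equation*}
D_{\mathcal E}(\bfx_k,\bfx_{k+1}) = D_f(u_k,u_{k+1}) + D_g(p_k,p_{k+1}) + \tfrac{\mu_f}{2}\|v_k-v_{k+1}\|^2 + \tfrac{\mu_g}{2}\|q_k-q_{k+1}\|^2.
\end{equation*}
The key idea is to split the $y$-quadratic part using the parameter $\beta\in(0,1)$: allocate the fraction $\beta$ of $\tfrac12\|y_k-y_{k+1}\|^2_{\mathcal D_\mu}$ to absorb the gradient cross term, and the remaining fraction $1-\beta$ to absorb the skew cross term. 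Then $\sqrt{\rho}$ is chosen as the minimum of the two resulting thresholds.

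For the gradient cross term, I would apply the scalar Lemma \ref{lem:cross Breg} componentwise to $f$ and $g$ against the $\beta$-rescaled quadratic. Concretely, Cauchy--Schwarz together with bound (\ref{eq:bound3}) and a Young inequality with the balancing constant $\sqrt{\beta\mu_f L_f}$ yield
\begin{equation*}
\alpha\bigl|\inprd{v_{k+1}-v_k,\nabla f(u_{k+1})-\nabla f(u_k)}\bigr| \leq \tfrac{\alpha}{\sqrt{\beta\,\mu_f/L_f}}\bigl(D_f(u_k,u_{k+1})+\tfrac{\beta\mu_f}{2}\|v_k-v_{k+1}\|^2\bigr),
\end{equation*}
and symmetrically for $g$. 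Summing the two contributions and using $\sqrt{\rho_1}:=\sqrt{\beta}\min\{\sqrt{\mu_f/L_f},\sqrt{\mu_g/L_g}\}$ gives a bound of $\tfrac{\alpha}{\sqrt{\rho_1}}\bigl(D_F(x_k,x_{k+1})+\tfrac{\beta}{2}\|y_k-y_{k+1}\|^2_{\mathcal D_\mu}\bigr)$.

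For the skew cross term, I would invoke Lemma \ref{lm:eignDBsyn} applied to the $(1-\beta)$-rescaled matrix inequality $c\mathcal D_\mu\pm\alpha\mathcal B^{\sym}\geq 0$ with the appropriate constant, which yields
\begin{equation*}
\pm\alpha\|y_k-y_{k+1}\|^2_{\mathcal B^{\sym}} \leq \tfrac{\alpha}{\sqrt{\rho_2}}\cdot\tfrac{1-\beta}{2}\|y_k-y_{k+1}\|^2_{\mathcal D_\mu},
\end{equation*}
with $\sqrt{\rho_2}$ proportional to $(1-\beta)\sqrt{\mu_f\mu_g}/\|B\|$. Setting $\sqrt{\rho}=\min\{\sqrt{\rho_1},\sqrt{\rho_2}\}$ (which is exactly the formula in the statement) and summing the two displayed inequalities produces the sandwich (\ref{eq:DEsaddle}). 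The bound (\ref{eq:ealprhaa1}) then follows by specializing (\ref{eq:DEsaddle}) at $\bfx_k=\bfx$, $\bfx_{k+1}=\bfx^*$: since $\nabla\mathcal E(\bfx^*)=0$ we have $D_{\mathcal E}(\bfx,\bfx^*)=\mathcal E(\bfx)$, and choosing the two $\pm$ signs to match the signs in the definition of $\mathcal E^\alpha$ gives $(1-\alpha/\sqrt{\rho})\mathcal E(\bfx)\leq\mathcal E^\alpha(\bfx)\leq(1+\alpha/\sqrt{\rho})\mathcal E(\bfx)$; the restriction $\alpha\leq\sqrt{\rho}$ makes the lower bound non-negative and the upper bound at most $2\mathcal E(\bfx)$.

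The main difficulty — or more precisely, the only nontrivial design choice — is picking the weight $\beta$ that balances the two cross-term penalties. The gradient cross prefers $\beta$ close to $1$ (larger budget to dominate the Young penalty), whereas the skew cross prefers $\beta$ close to $0$; the $\max_{\beta\in(0,1)}$ in Algorithm \ref{alg:AOR-HB saddle}'s step size is exactly the optimization of this trade-off. Everything else is a mechanical assembly of the two lemmas already proved in the excerpt, so no fundamentally new estimate is needed.
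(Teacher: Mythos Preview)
Your proposal is correct and follows essentially the same route as the paper: split the $\mathcal D_\mu$-quadratic in $D_{\mathcal E}$ into a $\beta$-fraction and a $(1-\beta)$-fraction, control the gradient cross term against the first piece via the vector/componentwise form of Lemma~\ref{lem:cross Breg}, control the bilinear cross term against the second piece via Lemma~\ref{lm:eignDBsyn}, add the two sandwich bounds, and then specialize at $\bfx_{k+1}=\bfx^*$ to obtain~(\ref{eq:ealprhaa1}). Your identification of the two thresholds $\sqrt{\rho_1}=\sqrt{\beta}\min\{\sqrt{\mu_f/L_f},\sqrt{\mu_g/L_g}\}$ and $\sqrt{\rho_2}\propto(1-\beta)\sqrt{\mu_f\mu_g}/\|B\|$, with $\sqrt{\rho}=\min\{\sqrt{\rho_1},\sqrt{\rho_2}\}$, is exactly the paper's argument.
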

\begin{proof}
For any $\beta\in (0,1)$, using the vector form of Lemma \ref{lem:cross Breg}, we have bound
\begin{equation}\label{eq:Fbound}
\begin{aligned}
&\left (1- \frac{\alpha}{\sqrt{\rho_F}}\right ) \left (D_F(x_k, x_{k+1})  + \frac{\beta}{2}\| y_{k}-y_{k+1}\|_{\mathcal D_\mu}^2\right) \\
\leq {}& D_F(x_k, x_{k+1})  + \frac{\beta}{2}\| y_{k}-y_{k+1}\|_{\mathcal D_\mu}^2\pm \alpha \inprd{ y_{k+1} - y_k ,\nabla F( x_{k+1}) -\nabla F(x_k) }\\
\leq {}&\left (1+ \frac{\alpha}{\sqrt{\rho_F}}\right ) \left (D_F(x_k, x_{k+1})  + \frac{\beta}{2}\| y_{k}-y_{k+1}\|_{\mathcal D_\mu}^2\right) 
\end{aligned}
\end{equation}
with $\rho_F =\beta \min \left \{ \frac{\mu_f}{L_f},  \frac{\mu_g}{L_g}\right\}$. 

Use Lemma \ref{lm:eignDBsyn}, we have the bound
\begin{equation}\label{eq:Bbound}
\begin{aligned}
\left  (1 - \frac{2\alpha \|B\|}{(1-\beta)\sqrt{\mu_f\mu_g}} \right)\frac{1-\beta}{2}\| y_{k}-y_{k+1}\|_{\mathcal D_\mu}^2 &\leq \frac{1-\beta}{2}\| y_{k}-y_{k+1}\|_{\mathcal D_\mu}^2\pm \alpha\| y_k - y_{k+1}\|^2_{\mathcal B^{\sym}}\\
&\leq \left  (1+\frac{2\alpha \|B\|}{(1-\beta)\sqrt{\mu_f\mu_g}} \right)\frac{1-\beta}{2}\| y_{k}-y_{k+1}\|_{\mathcal D_\mu}^2.
\end{aligned}
\end{equation}
Adding (\ref{eq:Fbound}) and (\ref{eq:Bbound}) implies (\ref{eq:DEsaddle}). 

Apply (\ref{eq:DEsaddle}) with $\bfx_k = \bfx$ and $\bfx_{k+1} = \bfx^*$ to get (\ref{eq:ealprhaa1}). 
\end{proof}

In Lemma \ref{lem: bound Lya cross}, the optimal $\beta^*$ such that 
$$\sqrt{\beta^*} \min \left \{\sqrt{\frac{\mu_f}{L_f}},  \sqrt{\frac{\mu_g}{L_g}} \right\} = (1-\beta^*)  \frac{\sqrt{\mu_f\mu_g}}{\|B\|} $$
gives the largest step size
$$
\begin{aligned}
    \alpha^* = \sqrt{\rho^*} &= \frac{\sqrt{\min \left \{\sqrt{\frac{\mu_f}{L_f}},  \sqrt{\frac{\mu_g}{L_g}} \right\}^2 + \frac{4\mu_f\mu_g}{\|B\|^2} }-\min \left \{\sqrt{\frac{\mu_f}{L_f}},  \sqrt{\frac{\mu_g}{L_g}} \right\}  }{\frac{2\sqrt{\mu_f\mu_g}}{\|B\|} }\min \left \{\sqrt{\frac{\mu_f}{L_f}},  \sqrt{\frac{\mu_g}{L_g}}\right \} \\
    &=\left( 1 - \frac{\left (\sqrt{\min \left \{\sqrt{\frac{\mu_f}{L_f}},  \sqrt{\frac{\mu_g}{L_g}} \right\}^2 + \frac{4\mu_f\mu_g}{\|B\|^2} }-\min \left \{\sqrt{\frac{\mu_f}{L_f}},  \sqrt{\frac{\mu_g}{L_g}} \right\} \right)^2 }{\frac{4\mu_f\mu_g}{\|B\|^2} }  \right)\frac{\sqrt{\mu_f\mu_g}}{\|B\|} 
\end{aligned}
$$
An alternative simple choice to check the order of convergence rate is to set $\beta =3 -2\sqrt{2} $ and the consequent step size is
$$\alpha = \sqrt{\rho} = (\sqrt{2} - 1)\min \left \{ \sqrt{\frac{\mu_f}{L_f}},  \sqrt{\frac{\mu_g}{L_g}},   \frac{\sqrt{\mu_f\mu_g}}{\|B\|}\right\}.$$
}

Now we are ready to prove the theorem.

\begin{proof}[Proof of Theorem \ref{thm:convergence rate of AOR-HB-saddle}]
We write (\ref{eq:explicit saddle}) as a correction of the implicit Euler method
\begin{equation}\label{eq:correctionIEsaddle}
 \bfx_{k+1}  - \bfx_k = \alpha \mathcal G( \bfx_{k+1}) - \alpha \begin{pmatrix}
    0 & I  \\
\mathcal D_{\mu}^{-1}\nabla F & 0
\end{pmatrix}( \bfx_{k+1} - \bfx_k)
- \alpha 
\begin{pmatrix}
0\\
 \mathcal D_{\mu}^{-1}\mathcal B^{\sym}(y_{k+1} - y_k)
\end{pmatrix}
.
\end{equation}
Use the definition of Bregman divergence, we have the identity for the difference of the Lyapunov function $\mathcal E$ at consecutive steps:
\begin{align*}
& \mathcal E(\bfx_{k+1}) - \mathcal E(\bfx_{k}) = \inprd{\nabla \mathcal E(\bfx_{k+1}), \bfx_{k+1} - \bfx_{k}} - D_{\mathcal E} (\bfx_{k}, \bfx_{k+1})
\end{align*}

Substitute (\ref{eq:correctionIEsaddle}) and expand the cross term:
\begin{equation}\label{eq:xcrossappendix}
\begin{aligned}
  \inprd{ \nabla \mathcal E(\bfx_{k+1}),&\bfx_{k+1}- \bfx_k} = {} \alpha \inprd{ \nabla \mathcal E(\bfx_{k+1}) , \mathcal G (\bfx_{k+1})} \\
 - &\alpha \inprd{ \nabla F( x_{k+1}) -\nabla F(x^*), y_{k+1} -y_k } - \alpha \inprd{  y_{k+1} -x^*,\nabla F( x_{k+1}) -\nabla F(x_k) }\\
- &\alpha \inprd{ y_{k+1} - y^*, y_{k+1} -y_k}_{\mathcal B^{\sym}}
\end{aligned}
\end{equation}

We split the gradient term as
\begin{equation*}
\begin{aligned}
& \alpha \inprd{ \nabla F( x_{k+1}) -\nabla F(x^*), y_{k+1} -y_k } + \alpha \inprd{  y_{k+1} -x^*,\nabla F( x_{k+1}) -\nabla F(x_k) } \\
=~& \alpha  \inprd{\nabla F( x_{k+1}) -\nabla F(x^*), y_{k+1} -x^*} +  \alpha \inprd{ y_{k+1} - y_k ,\nabla F( x_{k+1}) -\nabla F(x_k) } \\
&- \alpha  \inprd{  y_k -x^* ,\nabla F( x_{k}) -\nabla F(x^*)}.
\end{aligned}
\end{equation*}

We can use identity of squares to expand
\begin{equation*}
    \begin{aligned}
        &\inprd{ y_{k+1} - y^*, y_{k+1} -y_k}_{\mathcal B^{\sym}} =\frac{1}{2}\left ( \|y_{k+1}  - y^*\|_{ \mathcal B^{\sym}}^2 + \|y_{k+1} - y_k\|_{ \mathcal B^{\sym}}^2 - \|y_{k} -y^*\|_{\mathcal B^{\sym}}^2\right) .
    \end{aligned}
\end{equation*}

Substitute back to (\ref{eq:Eqdiff}) and rearrange terms, we obtain the following identity:
\begin{equation}\label{eq:EalphaAbridgesaddle}
\begin{aligned}
&   \mathcal E^{\alpha}(\bfx_{k+1})-\mathcal E^{\alpha} (\bfx_{k}) = ~ \alpha \inprd{\nabla \mathcal E(\bfx_{k+1}), \mathcal G (\bfx_{k+1} )}\\
   & - D_{\mathcal E} (\bfx_{k}, \bfx_{k+1}) + \alpha \inprd{ y_{k+1} - y_k ,\nabla F( x_{k+1}) -\nabla F(x_k) } - \alpha\| y_k - y_{k+1}\|^2_{\mathcal B^{\sym}},
\end{aligned}
\end{equation}
which holds for arbitrary step size $\alpha$. 

Now take $\alpha \leq \sqrt{\rho}$. The last term in (\ref{eq:EalphaAbridgesaddle}) is non-positive. Use the strong Lyapunov property, and the bound $\mathcal E^{\alpha} (\bfx)  \leq  2  \mathcal E(\bfx)$ to get
\begin{equation}\label{eq: decay of E quadratic appendix 2}
     \mathcal E^{\alpha}(\bfx_{k+1})-\mathcal E^{\alpha} (\bfx_{k})  \leq - \alpha \mathcal E(\bfx_{k+1}) \leq - \frac{\alpha}{2}\mathcal E^{\alpha}(\bfx_{k+1}),
\end{equation}
which implies the global linear convergence:
$$\mathcal E^{\alpha}(\bfx_{k+1}) \leq \frac{1}{1 + \alpha/2} \mathcal E^{\alpha} (\bfx_{k}) \leq \left (\frac{1}{1+\alpha/2} \right)^{k+1} \mathcal E^{\alpha} (\bfx_0) , \quad k  \geq 0.$$
Moreover, (\ref{eq: decay of E quadratic appendix 2}) implies 
$$
\mathcal E(\bfx_{k+1}) \leq \frac{1}{\alpha} (\mathcal E^{\alpha} (\bfx_{k}) - \mathcal E^{\alpha}(\bfx_{k+1})) \leq  \frac{1}{\alpha}\mathcal E^{\alpha} (\bfx_{k}) \leq   \frac{1}{\alpha}\left (\frac{1}{1+\alpha/2} \right)^{k} \mathcal E^{\alpha} (\bfx_0),
$$
which coincides with (\ref{eq: linear conv AGD}) with $C_0 =  \frac{1}{\alpha}  \mathcal E^{\alpha} (\bfx_0) \geq 0$.
\end{proof}

\section{AOR-HB-saddle-I algorithm and convergence analysis}\label{app:AOR-HB-saddle-I}

We propose the AOR-HB-saddle-I algorithm in the following Algorithm \ref{alg:AOR-HB saddle-I}.

\begin{algorithm}
\caption{AOR-HB-saddle-I Algorithm}
\label{alg:AOR-HB saddle-I}
\begin{algorithmic}[1]
\State \textbf{Parameters: $u_0, v_0 \in \mathbb{R}^m, p_0, q_0 \in \mathbb{R}^n, L_f, L_g,  \mu_f, \mu_g, \|B\|.$ \\
Set $\alpha =\min \left \{\sqrt{\frac{\mu_f}{L_f}},  \sqrt{\frac{\mu_g}{L_g}}\right\}.$}
\For{$k=0, 1, 2, \dots$}
\State $\displaystyle u_{k+1} = \frac{1}{1+\alpha} (u_k + \alpha v_k)$; \quad $\displaystyle p_{k+1} = \frac{1}{1+\alpha} (p_k + \alpha q_k)$
\State Find $(v_{k+1}, q_{k+1})$ such that 
\begin{equation*}
    \begin{aligned}
        v_{k+1} &=  \frac{1}{1+\alpha} \left (v_k + \alpha u_{k+1} -  \frac{\alpha}{\mu_f}(2\nabla f(u_{k+1}) - \nabla f(u_k) + B^{\top} q_{k+1}) \right),\\
        q_{k+1} &=  \frac{1}{1+\alpha} \left (q_k + \alpha p_{k+1} -  \frac{\alpha}{\mu_g}(2\nabla g(p_{k+1}) - \nabla g(p_k) - B v_{k+1}) \right).
    \end{aligned}
\end{equation*}
\EndFor
\State \textbf{return} $u_{k+1}, v_{k+1}, p_{k+1}, q_{k+1}$
\end{algorithmic}
\end{algorithm}

Consider the modified Lyapunov function
\begin{equation}
\begin{aligned}
\mathcal{E}^{\alpha}(\bfx) &:= \mathcal{E}(\bfx) + \alpha \inprd{\nabla F(x) - \nabla F(x^*), y - x^*},
\end{aligned}
\end{equation}

The following theorem show the convergence rate of AOR-HB-saddle-I method.

\begin{theorem}[Convergence of AOR-HB-saddle-I method]\label{thm:convergence rate of AOR-HB-saddle-I}
 Suppose  $f$ is $\mu_f$-strongly convex and $L_f$-smooth, $g$ is $\mu_g$-strongly convex and $L_g$-smooth and let $\sqrt{\rho} =\min \left \{ \sqrt{\frac{\mu_f}{L_f}},  \sqrt{\frac{\mu_g}{L_g}}\right\}$. Let $(u_k,v_k, p_k, q_k)$ be generated by Algorithm \ref{alg:AOR-HB saddle-I}  with initial value $(u_0,v_0, p_0, q_0)$ and step size $\alpha = \sqrt{\rho}$. Then there exists a non-negative constant $C_0 = C_0(u_0, v_0, p_0, q_0, \mu_f, L_f, \mu_g, L_g)$ so that we have the linear convergence
\begin{equation}\label{eq: linear conv AOR-HB-saddle-I}
\begin{aligned}
&D_f(u_{k+1},u^*)+  D_g(p_{k+1},p^*)  +\frac{\mu_f}{2}\| v_{k+1} - u^*\|^2 +\frac{\mu_g}{2}\| q_{k+1} - p^*\|^2 \leq C_0\left (\frac{1}{1+ \frac{1}{2}\sqrt{\rho}} \right)^k.
\end{aligned}
\end{equation}
\end{theorem}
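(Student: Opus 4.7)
The plan is to follow the blueprint of the proof of Theorem \ref{thm:convergence rate of AOR-HB-saddle}, exploiting the simplification that implicit treatment of the skew-symmetric coupling eliminates the $\mathcal{B}^{\sym}$ contribution to the discretization error. First, I would rewrite Algorithm \ref{alg:AOR-HB saddle-I} as a perturbation of the implicit Euler method for the HB-saddle flow (\ref{eq:AG saddle appendix}):
\begin{equation*}
\bfx_{k+1}-\bfx_k = \alpha\,\mathcal{G}(\bfx_{k+1}) - \alpha \begin{pmatrix} 0 & I \\ \mathcal{D}_\mu^{-1}\nabla F & 0 \end{pmatrix}(\bfx_{k+1}-\bfx_k),
\end{equation*}
which differs from (\ref{eq:correctionIEsaddle}) precisely by the absence of the term $\alpha\,\mathcal{D}_\mu^{-1}\mathcal{B}^{\sym}(y_{k+1}-y_k)$, since both $q_{k+1}$ and $v_{k+1}$ now enter through $\mathcal{N}$ implicitly. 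Well-definedness of the implicit step follows from the positive-definiteness of the symmetric part of $\bigl(\begin{smallmatrix}(1+\alpha)I & \frac{\alpha}{\mu_f}B^\top \\ -\frac{\alpha}{\mu_g}B & (1+\alpha)I\end{smallmatrix}\bigr)$.

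Next, I would substitute the above identity into the Bregman divergence expansion $\mathcal{E}(\bfx_{k+1})-\mathcal{E}(\bfx_k) = \langle\nabla\mathcal{E}(\bfx_{k+1}),\bfx_{k+1}-\bfx_k\rangle - D_{\mathcal{E}}(\bfx_k,\bfx_{k+1})$ and split the resulting $\nabla F$ cross term exactly as in the proof of Theorem \ref{thm:convergence rate of AOR-HB-saddle}. The outcome is the analog of (\ref{eq:EalphaAbridgesaddle}) without any $\mathcal{B}^{\sym}$ terms, namely
\begin{equation*}
\mathcal{E}^\alpha(\bfx_{k+1})-\mathcal{E}^\alpha(\bfx_k) = \alpha\langle\nabla\mathcal{E}(\bfx_{k+1}),\mathcal{G}(\bfx_{k+1})\rangle - D_{\mathcal{E}}(\bfx_k,\bfx_{k+1}) + \alpha\langle y_{k+1}-y_k,\nabla F(x_{k+1})-\nabla F(x_k)\rangle,
\end{equation*}
where the modified Lyapunov function $\mathcal{E}^\alpha$ now only carries the gradient cross term (no $\mathcal{B}^{\sym}$ correction).

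The bounding step becomes cleaner than in the explicit case. Apply the vector form of Lemma \ref{lem:cross Breg} componentwise to $f$ and $g$: since we no longer need to reserve part of $\mathcal{D}_\mu$ to absorb $\|y_k-y_{k+1}\|_{\mathcal{B}^{\sym}}^2$, we can take the full coefficient (i.e.\ $\beta=1$ in Lemma \ref{lem: bound Lya cross}), so the last two terms above are nonpositive for any $\alpha\le\sqrt{\rho}$ with $\sqrt{\rho}=\min\{\sqrt{\mu_f/L_f},\sqrt{\mu_g/L_g}\}$, and simultaneously $\mathcal{E}^\alpha(\bfx)\le 2\mathcal{E}(\bfx)$. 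Combining with the strong Lyapunov property from Theorem \ref{thm: exp conv HB-saddle} gives
\begin{equation*}
\mathcal{E}^\alpha(\bfx_{k+1})-\mathcal{E}^\alpha(\bfx_k) \le -\alpha\,\mathcal{E}(\bfx_{k+1}) \le -\tfrac{\alpha}{2}\mathcal{E}^\alpha(\bfx_{k+1}),
\end{equation*}
and a standard rearrangement produces the geometric rate $(1+\alpha/2)^{-k}$ with $\alpha=\sqrt{\rho}$, yielding (\ref{eq: linear conv AOR-HB-saddle-I}) with $C_0 = \alpha^{-1}\mathcal{E}^\alpha(\bfx_0)$.

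There is no substantive obstacle; the work consists in writing down the modified error equation and verifying that every place where the explicit proof had to pay a price for $\mathcal{B}^{\sym}$ (both in the error equation and in the $\mathcal{E}^\alpha\le 2\mathcal{E}$ bound) is now absent. The only genuine check is that the new $\mathcal{E}^\alpha$ remains nonnegative and comparable to $\mathcal{E}$ for $\alpha\le\sqrt{\rho}$, which is immediate from Lemma \ref{lem:cross Breg} applied blockwise. This is precisely where the improved rate $1-\mathcal{O}(\sqrt{\rho})$, independent of $\|B\|/\sqrt{\mu_f\mu_g}$, comes from.
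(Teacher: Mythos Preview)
Your proposal is correct and follows essentially the same approach as the paper: the paper's proof simply writes the error equation (\ref{eq:correctionIEsaddle-I}) without the $\mathcal B^{\sym}$ correction and then states that the argument of Theorem \ref{thm:convergence rate of AOR-HB} goes through verbatim with $f$ replaced by $F$, using the modified Lyapunov function $\mathcal E^{\alpha}(\bfx)=\mathcal E(\bfx)+\alpha\langle\nabla F(x)-\nabla F(x^*),y-x^*\rangle$ (no $\mathcal B^{\sym}$ term). Your write-up is a faithful, more detailed expansion of exactly that outline, including the key observation that one can now take the full $\mathcal D_\mu$ weight (your ``$\beta=1$'') in Lemma \ref{lem:cross Breg}, which is precisely what removes the $\|B\|/\sqrt{\mu_f\mu_g}$ dependence from the step size.
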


\begin{proof}
    AOR-HB-saddle-I can be written as a correction of the implicit Euler method
\begin{equation}\label{eq:correctionIEsaddle-I}
 \bfx_{k+1}  - \bfx_k = \alpha \mathcal G( \bfx_{k+1}) - \alpha \begin{pmatrix}
    0 & I  \\
\mathcal D_{\mu}^{-1}\nabla F & 0
\end{pmatrix}( \bfx_{k+1} - \bfx_k)
.
\end{equation}

The proof follows the proof of Theorem \ref{thm:convergence rate of AOR-HB} in Appendix \ref{app:proof of convergence} with $f = F$.

\end{proof}

\end{document}